\newcolumntype{L}[1]{>{\raggedright\let\newline\\\arraybackslash\hspace{0pt}}m{#1}}
\newcolumntype{C}[1]{>{\centering\let\newline\\\arraybackslash\hspace{0pt}}m{#1}}
\newcolumntype{R}[1]{>{\raggedleft\let\newline\\\arraybackslash\hspace{0pt}}m{#1}}
\newtheorem{thm}{Theorem}{\bf}{\rm}
\newtheorem{cor}[thm]{Corollary}{\bf}{\rm}
\newtheorem{lem}[thm]{Lemma}{\bf}{\rm}
\newtheorem{prop}[thm]{Proposition}{\bf}{\rm}
\newtheorem{defn}[thm]{Definition}{\bf}{\rm}
{\bf}{\rm}
{\bf}{\rm}
\DeclareMathOperator{\Diag}{Diag}
\DeclareMathOperator{\diag}{diag}
\DeclareMathOperator{\ldet}{ldet}
\DeclareMathOperator{\Trace}{tr}
\DeclareMathOperator{\rank}{rank}
\DeclareMathOperator{\dom}{dom}
\renewcommand*{\top}{%
  {\mathpalette\@transpose{}}%
}
\newcommand*{\@transpose}[2]{%
  % #1: math style
  % #2: unused
  \raisebox{\depth}{$\m@th#1\mathsf{T}$}%
}
\journalname{XXX}
\begin{document}

\title{Generalized Scaling for the Constrained Maximum-Entropy Sampling Problem\footnote{A short preliminary version of this paper appeared in ACDA 2023, \cite{ACDA2023}.}}%
\author{\hbox{Zhongzhu Chen  \and Marcia Fampa  \and Jon Lee}} %

\titlerunning{Generalized scaling for CMESP}

\institute{Z. Chen \at
              University of Michigan, Ann Arbor, MI, USA \\
              \email{zhongzhc@umich.edu}
              \and
           M. Fampa \at
              Universidade Federal do Rio de Janeiro, Brazil \\
              \email{fampa@cos.ufrj.br}           %  \\
           \and
            J. Lee \at
              University of Michigan, Ann Arbor, MI, USA \\
              \email{jonxlee@umich.edu}        %  \\
}

\authorrunning{Chen,   Fampa \& Lee}

%\date{\today}%
\date{\today}

\maketitle

\begin{abstract}
The best practical techniques for exact solution of instances of the constrained maximum-entropy sampling problem, a discrete-optimization problem arising in the design of experiments, are via a branch-and-bound
framework, working with a variety of concave continuous relaxations of the objective function. A standard and computationally-important  bound-enhance\-ment technique in this context is \emph{(ordinary) scaling},
via a single positive parameter. Scaling adjusts the shape of continuous relaxations to reduce the gaps between the upper bounds and the optimal value. We extend this technique to \emph{generalized scaling}, employing a positive vector of parameters, which allows much more flexibility and thus potentially reduces the gaps further. We give mathematical results aimed at supporting algorithmic methods for computing optimal generalized scalings, and we give computational results demonstrating the performance of generalized scaling on benchmark problem instances.
\keywords{ordinary scaling \and generalized scaling \and maximum-entropy sampling problem \and convex optimization}
\subclass{90C25 \and 90C27 \and 90C51 \and 62K99 \and 62H11}
\end{abstract}

%%%%%%%%%%%%%%%%%%%%%%%%%%%%%%%%%%%%%%%%%%%%%%%%%%%%%%%%%%%%%%%%%%%%%%%%%%%%%%%%%%%%%%%%%%%%%%%%%%%%%%%%%%%%%%

\section{Introduction}\label{sec:intro}
Let $C$ be a symmetric positive semidefinite matrix with rows/columns
indexed from $N:=\{1,2,\ldots,n\}$, with $n >1$.
Let $A\in\mathbb{R}^{m\times n}$ and $b\in\mathbb{R}^m$.
For $0< s < n$,
we define the \emph{constrained maximum-entropy sampling problem}
\begin{equation}\tag{CMESP}\label{CMESP}
\begin{array}{l}
z(C,s,A, b):=\\
\qquad \max \left\{\strut\textstyle  \ldet C[S(x),S(x)] ~:~ \mathbf{e}^\top x =s,~ x\in\{0,1\}^n,~Ax\leq b\strut \right\}
\end{array}
\end{equation}
where $S(x)$ is the support of $x\in\{0,1\}^n$,
 $C[S(x),S(x)]$ is the principal submatrix of $C$ indexed by $S(x)$, and
$\ldet(\cdot)$ is the natural logarithm of the determinant.
See \cite{FLbook} for an extensive and recent compilation of research on \ref{CMESP}.

When there are no
constraints $Ax\leq b$ present,
we refer to \ref{CMESP} as \hypertarget{MESP}{MESP},
which was introduced in the ``design of experiments'' literature by \cite{SW}, and was first approached for global optimization by \cite{KLQ}.
\hyperlink{MESP}{MESP} corresponds to the fundamental problem of choosing an $s$-subvector of
a Gaussian random $n$-vector, so as to maximize the ``differential entropy'', which is a measure of information (see \cite{Shannon}). We assume that $r:=\rank(C)\geq s$, so that
\hyperlink{MESP}{MESP}  always has a feasible solution with finite objective value.
\hyperlink{MESP}{MESP} has been applied extensively in the field of
environmental monitoring; see \cite[Chapter 4]{FLbook}, and the many references therein.
Important for applications, the constraints $Ax\leq b$ of \ref{CMESP}
can model budget limitations that take into account the differing cost of observing random variables, geographical considerations (such as
having lower and upper bounds on the number of observation in a region), and logical dependencies (such as not observing both of a given pair of random variables, due to technology constraints), for example.
More broadly, \hyperlink{MESP}{MESP} sits within the field of ``optimal design of experiments'', where
the handling of side constraints is a central topic (see \cite{DOE1,DOE2,DOE3}, for example).

\ref{CMESP} was first algorithmically approached by \cite{LeeConstrained}
and then by \cite{FampaPhD,WilliamsPhD,AFLW_IPCO,AFLW_Using}.
It is implicitly considered in all of the mathematical-programming approaches to \hyperlink{MESP}{MESP}, and it is explicitly
mentioned in \cite{Anstreicher_BQP_entropy,Kurt_linx}, for example.
\ref{CMESP} serves as a nice example of a ``non-factorable''
mixed-integer nonlinear program.
When $C$ is a diagonal matrix,
\ref{CMESP} reduces to a general cardinality-constrained
binary linear program.  \cite{AlThaniLeeTridiag,AlThaniLeeTridiag_journal} established that when $C$ is tridiagonal (or even when the support graph of $C$ is a spider with a bounded number of legs), \hyperlink{MESP}{MESP} is then polynomially solvable
by dynamic programming.

\cite{KLQ} established that \hyperlink{MESP}{MESP} is NP-hard
and introduced a novel B\&B (branch-and-bound) approach
based on a ``spectral bound.'' \cite{LeeConstrained}
extended the spectral approach to \ref{CMESP}.
\cite{AFLW_IPCO}  and \cite{AFLW_Using}
developed a bound, the so-called ``NLP bound'', employing a novel convex relaxation.
\cite{Anstreicher_BQP_entropy} developed the  ``BQP bound'', using an extended formulation based on the Boolean quadric polytope. \cite{Kurt_linx} introduced the ``linx bound'', based on a clever convex relaxation.
\cite{nikolov2015randomized} gave a novel
 ``factorization bound'' based on a subtle convex
 relaxation. This was further developed by \cite{li2020best} and then \cite{chen2023computing} (who coined the term ``factorization bound'').
  \cite{chen_mixing} gave a methodology for combining multiple
 convex-optimization bounds to give improved bounds.
 All of these convex-optimization based bounds admit variable-fixing methodology based on convex duality (see \cite{FLbook}, for example).
In computational  practice, the best bounds appear to be the linx bound, the factorization bound, and the NLP bound.
% \zc{If we do not experiment on NLP bound later, why we emphasize that it is state-of-art here?}
% response by JL: Well these is resonable evidence that the bound is good sometimes. Unfortunatley we don't have a convexity result for o-scaling it, so we didn't pursue it here.
For \hyperlink{MESP}{MESP}, the spectral bound  is dominated by the factorization bound (see \cite{chen2023computing}), and the
BQP bound is generally too time-consuming to compute, at present.
In what follows, we concentrate on the linx, BQP and factorization bounds; these are all within the state-of-the-art, and for all we could obtain substantial and relevant results.

An important general technique for potentially improving some of the
entropy upper bounds is ``scaling'',
based on the simple observation that
for a positive constant $\gamma$, and $S$
with $|S|=s$, we have that
\[
\det (\gamma C)[S,S] = \gamma^s \det C[S,S]~.
\]
With this identity,  we can easily see that
\[
z(C,s, A, b) = z(\gamma C,s, A, b) -s \log \gamma~,
\]
so upper bounds for $z(\gamma C,s, A, b)$
 yield upper bounds for $z(C,s, A, b)$,
shifting by $-s \log \gamma$.
It is important to note that many bounding methods are \emph{not} invariant under
scaling; that is the scaled bound does \emph{not} generally  shift by $-s\log \gamma$ (notable exceptions being the spectral and factorization bounds for \hyperlink{MESP}{MESP}).
Scaling was first introduced in \cite{AFLW_IPCO,AFLW_Using}, and then exploited in \cite{Anstreicher_BQP_entropy,Kurt_linx,Al-ThaniLee1,chen_mixing,chen2022masking,chen2023computing}. Scaling can be seen as a technique aimed at adjusting the shape of  concave continuous relaxations of the objective of \ref{CMESP} in order to decrease the gap between the upper bounds and $z(C,s, A, b)$; see \cite{chen2022masking} for an exploration of this in the context of the linx bound.

% In this work, we generalize the idea of scaling to the vector case where we denote $\Upsilon\in \mathbb{R}^n_{++}:=\{\gamma_1, \cdots, \gamma_n\}^\top $ and $t_1, t_2 \in \mathbb{R}_+$. With $|S| = s$, we have that
% \begin{align}\label{gscale:eqn}
%     \det \left(\Diag(\Upsilon)^{t_1} C \Diag(\Upsilon)^{t_2}\right)[S,S] =\bigg( \prod_{i\in S} \gamma_i^{t_1+t_2}\bigg) \cdot \det C[S,S]~
% \end{align}
% where $\Diag(\Upsilon)$ is the diagonal matrix with $\Upsilon$ as diagonals. With this identity, we have that
% \begin{align*}
%     z(C,s, A, b) = z(\Diag(\Upsilon)^{t_1} C \Diag(\Upsilon)^{t_2},s, A, b) - (t_1+t_2)\cdot\bigg( \sum_{i\in S} \log \gamma_i\bigg) ~.
% \end{align*}
% Therefore, upper bounds for $z(\Diag(\Upsilon)^{t_1} C \Diag(\Upsilon)^{t_2},s, A, b)$
%  yield upper bounds for $z(C,s, A, b)$,
% shifting by $-(t_1+t_2)\cdot\big( \sum_{i\in S} \log \gamma_i\big)$. We can facilitate the theoretical analysis of $\Upsilon$ for each upper bound by selecting distinct values for $t_1$ and $t_2$. Although symmetry is preserved by setting $t_1=t_2$, it may be necessary to designate $t_1\neq t_2$ in cases where it does not compromise the symmetry of the upper bound equation but make analysis easier, such as in the linx bound.

% One can observe that $t$ can be merged into $\Upsilon$ without affecting the results. However, as we will see later that by setting $t$ to different values for different upper bounds, we can make the theoretical analysis on $\Upsilon$ easier.

In this work, we generalize the idea of scaling to the vector case and apply it to three different upper bounds: the BQP bound, as well as the state-of-the-art linx and factorization bounds. Throughout, we let  $\Upsilon:=\left(\gamma_1,\gamma_2,\ldots,\gamma_n\right)^\top \in \mathbb{R}_{++}^n$ be a ``scaling vector.''
We refer to our technique as \emph{g-scaling} (i.e., \emph{general scaling}) and the corresponding bounds as \emph{g-scaled} (i.e., \emph{generalized scaled}), and when all elements
of $\Upsilon$ are equal, we say \emph{o-scaling} (i.e., \emph{ordinary scaling}) and \emph{o-scaled} (i.e., \emph{ordinary scaled}).
If all elements of $\Upsilon$ are equal to 1,
we say \emph{unscaled}. In general, setting all of the elements of $\Upsilon$ to be equal,  g-scaling reduces to o-scaling.
This means that g-scaling can provide an upper bound that is at least as good as o-scaling. Moreover, as we will see later, g-scaling can sometimes provide significantly improved upper bounds compared to o-scaling.

 Additionally in this work, we also leverage another key technique for obtaining bounds, namely  ``complementation'', first utilized by \cite{AFLW_IPCO,AFLW_Using}.
If $C$ is invertible,
 %%%%%%%%%%%%%%%%
we have
\[
z(C,s, A, b)=z(C^{-1},n-s,-A,b-A\mathbf{e}) + \ldet C,
\]
\noindent where $z(C^{-1},n-s,-A,b-A\mathbf{e})$ denotes the
optimal value of \ref{CMESP} with $C,s,A,b$ replaced by
$C^{-1},n-s,-A,b-A\mathbf{e}$, respectively.
So we have a
\emph{complementary} \ref{CMESP} problem
 and \emph{complementary} bounds
 (i.e., bounds for the
 complementary problem plus $\ldet C$)
 immediately give us bounds on
 $z$.
 Some upper bounds on $z$ also shift by  $\ldet C$  under complementing (notably, the spectral and linx bounds),
 in which case there is no additional value in computing the
 complementary bound. But the NLP, BQP and factorization bounds are generally not invariant under complementation.
 Details on all of this can be found in \cite{FLbook}.

% \subsection{Terminology and motivation.}
% We let $\mathbb{S}^n$, $\mathbb{S}^n_+$, and $\mathbb{S}^n_{++}$ to denote the set of symmetric matrices, positive semidefinite matrices, and positive definite matrices of order $n$, respectively. We let $\mathbb{R}^n$, $\mathbb{R}^n_+$, and $\mathbb{R}^n_{++}$ to denote the set of vectors of order $n$ with any elements, nonnegative, and positive elements, respectively.
% We denote an all-ones  vector
% by $\mathbf{e}$ and the $i$-th standard unit vector by $\mathbf{e}_i$\thinspace.
% For matrices $A$ and $B$ with the same shape,
% $A\circ B $ is the Hadamard (i.e., element-wise) product,
% and $A\bullet B:=\Trace(A^\top B)$ is the matrix dot-product.
% For a matrix $A$, we denote row $i$ by $A_{i\cdot}$ and
% column $j$ by $A_{\cdot j}$\thinspace.

% \noindent Throughout, we let  $\Upsilon:=\left(\gamma_1,\gamma_2,\ldots,\gamma_n\right)^\top \in \mathbb{R}_{++}^n$ be a ``scaling vector''.
% We refer to our bounds as \emph{g-scaled} (i.e., \emph{generalized scaled}), and when all elements
% of $\Upsilon$ are equal, we  say  \emph{o-scaled} (i.e., \emph{ordinary scaled}).
% If all elements of $\Upsilon$ are equal to 1,
% we say \emph{un-scaled}.
% o-scaling is very important for computing good upper bound values
% (for the NLP bound, the BQP bound, and the linx bound), in the context of B\&B (see \cite{AFLW_Using,Anstreicher_BQP_entropy,Kurt_linx}). So our motivation in considering generalized scaling is to improve on the success of o-scaling.

\subsection{Organization and contributions.}
In \S\ref{sec:bqp}, we introduce the g-scaled BQP bound via symmetric scaling of $C$, and our substantial new result is its convexity in the log of the scaling vector,
generalizing an important and practically-useful result for o-scaling (see \cite[Theorem 11]{chen_mixing}).
In \S\ref{sec:linx}, we introduce the g-scaled
linx bound  via symmetric scaling of $C\Diag(x)C$,  and our substantial new result is its convexity in the log of the scaling vector,
generalizing another very important and practically-useful
result for o-scaling (see \cite[Theorem 18]{chen_mixing}). We wish to emphasize that
the construction of the g-scaling versions of
the BQP and linx bounds are different (unlike in the case of o-scaling) because this is what is needed to gain the convexity results.
These convexity results are key
for the tractability of globally optimizing the scaling, something that we
do not have for general  bound ``masking''\footnote{This is a related bound-improvement  technique where we preprocess $C$ by taking its Hadamard product with a correlation matrix.} (see \cite{AnstreicherLee_Masked,BurerLee} for this, in the context of the spectral bound).
In \S\ref{sec:fact}, we introduce the ``g-scaled
factorization bound.''
We note that there is no o-scaling for the factorization bound, as that bound is invariant under such a scaling (see \cite[Theorem 2.1]{chen2023computing}),
so scaling in this context is completely novel.
In this section, which is one of our major contributions, we  establish ``generalized differentiability'' results for the factorization bound (for the first time, even without the context of scaling), which are essential for the fast and stable calculation of the factorization bound and for globally optimizing the scaling vector, \emph{even using general-purpose nonlinear-optimization software.}
The prior literature on this type of bound focused on characterizing subdifferentials
(see \cite{nikolov2015randomized,li2020best}),
so our generalized differentiability results
were completely unanticipated.
In fact, \cite{chen2023computing})  empirically observed that general-purpose nonlinear-optimization software
works remarkably well, and we can see our generalized differentiability results as an explanation.
% \zc{Shall we say more here? For example, the "general differentiability" on the boundary.}
% J.L.: let's not --- we will get to into details for the introduction
 We are also able to establish that
for \hyperlink{MESP}{MESP}, the all-ones vector is a stationary
point for the factorization bound as a function of the scaling vector. Therefore,
in contrast to the BQP bound and the linx bound, g-scaling is unlikely to help the factorization bound for \hyperlink{MESP}{MESP}.
Despite this, through numerical experiments, we observe that g-scaling can significantly improve the factorization bound for \ref{CMESP},
while o-scaling cannot help it (as we mentioned above).
%
% In \S\ref{sec:fact}, we introduce the g-scaled
% factorization bound, and we establish that
% for \hyperlink{MESP}{MESP}, the all-ones vector is a stationary
% point for the bound as a function of the scaling vector, and therefore
% g-scaling is unlikely to be helpful for \hyperlink{MESP}{MESP}.
% Interestingly, o-scaling cannot help the factorization bound  for \ref{CMESP} (see \cite[Theorem 2.1]{Fact}), but we establish that g-scaling can significantly improve it.
In \S\ref{sec:num}, we present results of computational experiments, demonstrating the improvements on upper bounds
and on the number of variables that can be fixed (using convex duality)
due to g-scaling.
Further, we conducted experiments to
demonstrate the relevance of our generalized differentiability results
for the factorization bound.
In \S\ref{sec:conc}, we make some brief concluding remarks.

\subsection{Notation.}
$\Diag(x)\in\mathbb{R}^{n\times n}$ makes a diagonal matrix from
$x\in\mathbb{R}^n$, and $\diag(X)\in\mathbb{R}^n$ extracts the diagonal of $X\in\mathbb{R}^{n\times n}$ as an $n$-vector.
We let $\mathbb{R}^n_+$ (resp., $\mathbb{R}^n_{++}$) be the points in $\mathbb{R}^n$ having
 nonnegative (resp., positive) components.
We let $\mathbb{S}^n_+$ (resp., $\mathbb{S}^n_{++}$) be the set of
 positive semidefinite (resp., definite) symmetric matrices of order $n$.
 We let $\lambda_\ell(M)$ be the $\ell$-th greatest eigenvalue of
$M\in \mathbb{S}^n_+$\,. We denote by $\mathbf{e}^n$ an all-ones vector in $\mathbb{R}^n$, $e^n_i$ the
$i^{\text{th}}$ standard unit vector in $\mathbb{R}^n$, and $I_n$ the order-$n$ identity matrix.
For matrices $A$ and $B$ with the same shape,
$A\circ B $ is the Hadamard (i.e., element-wise) product,
and $A\bullet B:=\Trace(A^\top B)$ is the matrix dot-product.
For a matrix $A$, we denote row $i$ by $A_{i\cdot}$ and
column $j$ by $A_{\cdot j}$. We let $A^\dagger$  be the Moore-Penrose (generalized) inverse of $A$. We denote natural logarithm by $\log(\cdot)$,
and we apply it component-wise to vectors. We let $E^n_{ij}$ denote the order-$n$ square matrix with
the only nonzero component being a one in the
$(i,j)$ position. We frequently drop the $n$ (for all of these) when it is clear from the context. We use $\|\cdot\|$ for the vector 2-norm when applied to a vector and the 2-norm induced by the vector 2-norm when applied to a matrix. Given a convex function $f$, we use $\partial f(\cdot)$ for the subdifferential of $f$. Given a function $f$ and a direction $d$, we write $\partial f(\cdot; d)$ for the directional derivative of $f$ in the direction $d$.

The notations associated with each \ref{CMESP} upper bound will be introduced in the corresponding section. For simplicity, we will exclude $(C, s, A, b)$ from notations whenever it is clear from the context.

% {\color{blue}
% {\bf In case we need some of this:}
% $\Diag(X):=\Diag(\diag(X))$.
%  $\mathbf{e}_i$ is the $i$-th standard unit vector.
% $A\bullet B:=\Trace(A^\top B)$ is the matrix dot-product.
%  $A_{i\cdot}$ is row $i$ and $A_{\cdot j}$ is
% column $j$ of matrix $A$.
% }

\section{BQP bound}\label{sec:bqp}

The Boolean-Quadratic-Polytope (\ref{BQP}) bound was first suggested in 1995 by Christoph
Helmberg (in private communication to K. Anstreicher and J.Lee),
% suggested (essentially) 5 (see [Lee12, FL00]) to Anstreicher and Lee,
but no one developed it at that
time.
Finally, it was analyzed and developed in \cite{Anstreicher_BQP_entropy}
(see \cite[Section 3.6, p. 110 ]{FLbook} for more details). We
lift to matrix space, by defining  the convex  set %$P(n,s):=$
\begin{align*}
P(n,s):=\!\left\{
(x,X)\in\mathbb{R}^n\! \times \! \mathbb{S}^n \,:\,
 X \! - \!xx^\top\succeq 0,\, \diag(X)=x,\,
\mathbf{e}^\top x=s,\, X\mathbf{e}=sx
\right\}.
\end{align*}
For $\Upsilon \in \mathbb{R}_{++}^n$ and $(x, X)\in P(n,s)$, we now define
%$f_{{\tiny\mbox{BQP}}}(x,X;\Upsilon):=$
   \begin{align*}
   f_{{\tiny\mbox{BQP}}}(x,X;\Upsilon):=&
\textstyle \ldet \left(\strut \left(\Diag(\Upsilon)C\Diag(\Upsilon)\right)\circ X  + \Diag(\mathbf{e}-x)\right)\\
&\quad - \textstyle 2\sum_{i=1}^n  x_i\log \gamma_i \,,
	\end{align*}
\noindent with domain %$\dom\left(f_{{\tiny\mbox{BQP}}};\Upsilon\right):=$
\begin{align*}
\dom\left(f_{{\tiny\mbox{BQP}}};\Upsilon\right):=
  &\left\{\strut(x,X) \in\mathbb{R}^n\times \mathbb{S}^n~:~ \right.\\
 & \quad \left.\left(\strut\Diag(\Upsilon)C\Diag(\Upsilon)\right)\circ X  + \Diag(\mathbf{e}-x) \succ 0\right\}.
\end{align*}
The \emph{g-scaled BQP bound} is defined as
\[
\begin{array}{ll}
	& z_{{\tiny\mbox{BQP}}}(\Upsilon):=\max \left\{f_{{\tiny\mbox{BQP}}}(x,X;\Upsilon) ~:~ (x,X)\in P(n,s),~  Ax\leq b \right\}.\tag{BQP}\label{BQP}
	\end{array}
 \]
We say $x$ is feasible to \ref{BQP} if $x$ satisfies all the constraints in \ref{BQP}.

% Note that this is applying the BQP bound to the right-hand-side of the identity \eqref{gscale:eqn} where $t_1=t_2=1$, i.e.,
Note that we can interpret \ref{BQP} as applying the
unscaled BQP bound to the symmetri\-cally-scaled
matrix $\Diag(\Upsilon)C\Diag(\Upsilon)$, and then
correcting by $- 2\sum_{i=1}^n  \!x_i\log \gamma_i$~.

\begin{thm}\label{thm:bqp}
% \phantom{.}
% \vspace{-10pt}
For all $\Upsilon \in \mathbb{R}_{++}^n$\,, %in \ref{BQP},
the following holds:
\begin{itemize}
\item[\ref{thm:bqp}.i.] \label{bqp.i}  $z_{{\tiny\mbox{BQP}}}(\Upsilon)$ is a valid upper bound for the optimal value of \ref{CMESP}, i.e., $z(C,s,A,b)\leq z_{{\tiny\mbox{BQP}}}(\Upsilon)$;
\item[\ref{thm:bqp}.ii.] \label{bqp.ii} the function $f_{{\tiny\mbox{BQP}}}(x,X;\Upsilon)$ is concave in $(x,X)$ on $\dom\left(f_{{\tiny\mbox{BQP}}};\Upsilon\right)$ and continuously differentiable in $(x,X, \Upsilon)$ on
$\dom\left(f_{{\tiny\mbox{BQP}}};\Upsilon\right) \times \mathbb{R}_{++}^n$\,;
\item[\ref{thm:bqp}.iii.] \label{bqp.iii} for fixed $(x,X)\in \dom\left(f_{{\tiny\mbox{BQP}}};\Upsilon\right)$, $f_{{\tiny\mbox{BQP}}}(x,X;\Upsilon)$ is convex in $\log \Upsilon$, and thus $z_{{\tiny\mbox{BQP}}}(\Upsilon)$ is convex in $\log \Upsilon$.
\end{itemize}
\end{thm}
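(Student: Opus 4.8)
The plan is to treat the three claims separately, with (iii) carrying essentially all of the content. For (i), I would show directly that \ref{BQP} is a relaxation of \ref{CMESP}: given $x\in\{0,1\}^n$ feasible for \ref{CMESP}, put $S:=S(x)$ and $X:=xx^\top$; since $x_i^2=x_i$ one checks at once that $(x,X)\in P(n,s)$ and $Ax\le b$. Because $\bigl(\Diag(\Upsilon)C\Diag(\Upsilon)\bigr)\circ(xx^\top)$ has $(i,j)$-entry $\gamma_i\gamma_jC_{ij}x_ix_j$, after the permutation listing $S$ first the matrix $\bigl(\Diag(\Upsilon)C\Diag(\Upsilon)\bigr)\circ X+\Diag(\mathbf{e}-x)$ is block diagonal with blocks $\Diag(\Upsilon_S)C[S,S]\Diag(\Upsilon_S)$ and $I_{n-s}$; hence $(x,X)\in\dom(f_{{\tiny\mbox{BQP}}};\Upsilon)$ whenever $C[S,S]\succ0$, and then its $\ldet$ equals $2\sum_{i\in S}\log\gamma_i+\ldet C[S,S]$. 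Subtracting $2\sum_i x_i\log\gamma_i=2\sum_{i\in S}\log\gamma_i$ leaves $f_{{\tiny\mbox{BQP}}}(x,xx^\top;\Upsilon)=\ldet C[S,S]$, so maximizing over feasible integer $x$ (an optimal one of finite value necessarily has $C[S,S]\succ0$) gives $z(C,s,A,b)\le z_{{\tiny\mbox{BQP}}}(\Upsilon)$, the claim being vacuous if \ref{CMESP} has no feasible point of finite value.

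For (ii), concavity in $(x,X)$ is immediate: $(x,X)\mapsto\bigl(\Diag(\Upsilon)C\Diag(\Upsilon)\bigr)\circ X+\Diag(\mathbf{e}-x)$ is affine, $\ldet(\cdot)$ is concave on $\mathbb{S}^n_{++}$, and the remaining term $-2\sum_i x_i\log\gamma_i$ is linear in $x$. For joint smoothness I would note that on the open set where $\Upsilon\in\mathbb{R}^n_{++}$ and $(x,X)\in\dom(f_{{\tiny\mbox{BQP}}};\Upsilon)$, the matrix $M(x,X,\Upsilon):=\bigl(\Diag(\Upsilon)C\Diag(\Upsilon)\bigr)\circ X+\Diag(\mathbf{e}-x)$ is a polynomial map into $\mathbb{S}^n_{++}$, $\ldet$ is real-analytic there, and $-2\sum_i x_i\log\gamma_i$ is real-analytic on $\mathbb{R}^n\times\mathbb{R}^n_{++}$; hence $f_{{\tiny\mbox{BQP}}}$ is continuously differentiable (indeed real-analytic) on that set.

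For (iii), write $\psi:=\log\Upsilon$ and $Y:=C\circ X$. Since $C\succeq0$ and $X\succeq0$ (as $X-xx^\top\succeq0$ and $xx^\top\succeq0$) for $(x,X)\in P(n,s)$, the Schur product theorem gives $Y\succeq0$; likewise $\diag(X)=x$ with $X\succeq xx^\top$ forces $x\in[0,1]^n$, so $\mathbf{e}-x\ge0$. The crux is the diagonal congruence
\[
\bigl(\Diag(\Upsilon)C\Diag(\Upsilon)\bigr)\circ X+\Diag(\mathbf{e}-x)=\Diag(e^\psi)\Bigl(Y+\Diag\bigl((\mathbf{e}-x)\circ e^{-2\psi}\bigr)\Bigr)\Diag(e^\psi),
\]
from which, taking $\ldet$ and subtracting $2\sum_i x_i\psi_i$,
\[
f_{{\tiny\mbox{BQP}}}(x,X;e^\psi)=2(\mathbf{e}-x)^\top\psi+\ldet\Bigl(Y+\Diag\bigl((\mathbf{e}-x)\circ e^{-2\psi}\bigr)\Bigr).
\]
(In passing, this congruence shows membership of $(x,X)$ in $\dom(f_{{\tiny\mbox{BQP}}};\Upsilon)$ is independent of $\Upsilon$, so the set of admissible $\psi$ is all of $\mathbb{R}^n$, hence convex.) The first term is linear in $\psi$; for the second, set $p:=(\mathbf{e}-x)\circ e^{-2\psi}\ge0$ and $W:=\bigl(Y+\Diag(p)\bigr)^{-1}$. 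A two-step differentiation shows the $\psi$-Hessian of $\ldet\bigl(Y+\Diag(p)\bigr)$ equals $4\bigl(\Diag(\diag N)-N\circ N\bigr)$, where $N:=\Diag(\sqrt p)\,W\,\Diag(\sqrt p)$. On coordinates with $p>0$ we have $N=\bigl(\Diag(p^{-1/2})Y\Diag(p^{-1/2})+I\bigr)^{-1}=(Z+I)^{-1}$ with $Z\succeq0$, hence $0\prec N\preceq I$; then $\Diag(\diag N)-N\circ N=N\circ(I-N)\succeq0$ by the Schur product theorem, while coordinates with $p_i=0$ (equivalently $x_i=1$) contribute trivially. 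Thus $\psi\mapsto f_{{\tiny\mbox{BQP}}}(x,X;e^\psi)$ is convex, and $z_{{\tiny\mbox{BQP}}}(\Upsilon)$, being the pointwise supremum over feasible $(x,X)$ of these convex functions, is convex in $\log\Upsilon$.

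The main obstacle is (iii): one must spot the diagonal-congruence identity that converts the scaling dependence into a linear term plus a diagonally-perturbed $\ldet$, carry out the Hessian calculation cleanly, and recognize the outcome as $N\circ(I-N)$ with $0\prec N\preceq I$ so that Schur's product theorem finishes it; the handling of the $p_i=0$ coordinates and of the domain's independence of $\Upsilon$ is routine bookkeeping but should be stated.
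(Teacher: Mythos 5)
Your proposal is correct in substance and, for the decisive part \ref{thm:bqp}.iii, takes a genuinely different and cleaner route than the paper. Parts i and ii match the paper's argument essentially verbatim: an optimal integer point with $X=xx^\top$ yields a block-diagonal matrix whose $\ldet$ cancels the correction term $-2\sum_i x_i\log\gamma_i$; concavity comes from affineness of the inner map plus concavity of $\ldet$, and smoothness from analyticity. For iii, the paper works directly with $F_{{\tiny\mbox{BQP}}}(\Upsilon)=\left(\Diag(\Upsilon)C\Diag(\Upsilon)\right)\circ X+\Diag(\mathbf{e}-x)$, computes $\partial f/\partial\Upsilon$ and $\partial^2 f/\partial\Upsilon^2$ through a page of matrix calculus built on the identities \eqref{FAbqp}--\eqref{AFbqp}, and only afterwards changes variables to $\log\Upsilon$; you instead pull the scaling through the Hadamard product via a diagonal congruence, which rewrites $f_{{\tiny\mbox{BQP}}}$ as a term linear in $\psi=\log\Upsilon$ plus $\ldet\left(Y+\Diag(p)\right)$ with $Y=C\circ X$ and $p=(\mathbf{e}-x)\circ e^{-2\psi}$, so the Hessian is an elementary two-step differentiation. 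The two computations land on literally the same matrix: your $N=\Diag(\sqrt{p})\,W\,\Diag(\sqrt{p})$ is the paper's $D_{{\tiny\mbox{BQP}}}F_{{\tiny\mbox{BQP}}}(\Upsilon)^{-1}D_{{\tiny\mbox{BQP}}}$ after conjugating by $\Diag(\Upsilon)$, and both conclude with $N\circ(I-N)\succeq 0$ via the Schur Product Theorem. Your congruence also makes explicit a fact the paper leaves implicit, namely that membership in $\dom\left(f_{{\tiny\mbox{BQP}}};\Upsilon\right)$ is independent of $\Upsilon$, which is what legitimizes varying $\psi$ over all of $\mathbb{R}^n$ before taking the pointwise supremum.

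The one place you are too quick is the boundary case $x_i=1$ (i.e., $p_i=0$), which the paper treats with a separate continuity argument (its Case 2). It is true that those rows and columns of $N$, hence of $N\circ(I-N)$, vanish, so positive semidefiniteness reduces to the principal submatrix $N[S,S]\circ\left(I_S-N[S,S]\right)$ on $S=\{i: p_i>0\}$. But $I_S-N[S,S]\succeq 0$ does not follow from your formula $N=(Z+I)^{-1}$, which is only available when \emph{all} $p_i>0$; the inverse of a principal submatrix is not the principal submatrix of the inverse. You need a separate argument that $I-N\succeq 0$ for the full $N$: for instance, the block matrix with blocks $Y+\Diag(p)$, $\Diag(\sqrt{p})$, $\Diag(\sqrt{p})$, $I$ is positive semidefinite because its Schur complement with respect to the identity block is $Y\succeq 0$, and since $Y+\Diag(p)\succ 0$ on the domain, the other Schur complement $I-N$ is then also positive semidefinite. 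Alternatively, the paper's perturbation-and-continuity argument closes the same gap. With that one line added, your proof is complete.
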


\begin{remark}
\noindent \cite{Anstreicher_BQP_entropy} established Theorem \ref{thm:bqp}.\emph{i} for $\Upsilon:=\gamma\mathbf{e}$, with $\gamma\in \mathbb{R}_{++}$\,. We generalize this result to $\Upsilon\in \mathbb{R}^n_{++}$\,. The concavity in Theorem \ref{thm:bqp}.\emph{ii}
is a result of \cite{Anstreicher_BQP_entropy}, with details
filled in by \cite[Section 3.6.1]{FLbook}. Theorem \ref{thm:bqp}.\emph{iii}
significantly generalizes a result of \cite{chen_mixing}, where it is
established only for o-scaling: i.e., on $\{ \Upsilon:= \gamma \mathbf{e} ~:~ \gamma\in \mathbb{R}_{++}\}$. The proof of Theorem \ref{thm:bqp}.\emph{iii} requires new techniques (see the proof below). Additionally, the result is quite important as it enables the use of readily available quasi-Newton methods (like BFGS) for finding the globally-optimal g-scaling vector for the \ref{BQP} bound.
\end{remark}

\begin{proof}[Theorem \ref{thm:bqp}]
% \phantom{.}
% \vspace{-7pt}

\begin{itemize}
    \item[\ref{thm:bqp}.i:]
    It is enough to prove that there is a feasible solution to \ref{BQP} with  objective value equal to the optimal value of \ref{CMESP}. In fact, let $x^* \in \{0,1\}^n$ be an optimal solution to \ref{CMESP} with support $S\left(x^*\right)$, and define $X^* := x^*\left(x^*\right)^\top$. Without loss of generality, we assume that $S\left(x^*\right)=\{1, \ldots, s\}$, i.e., $x^* \!= \!\left(\!\begin{array}{c}
         \mathbf{e}^{s}  \\
         0
    \end{array}\!\right)$ and $X^*\! = \!\left(\!\begin{array}{cc}
          I_{s} & 0 \\
          0 & 0
    \end{array}\!\right)$. Clearly, $(x^*, X^*)\in \dom\left(f_{{\tiny\mbox{BQP}}};\Upsilon\right)$ and is feasible to \ref{BQP}. Let $\Upsilon_{S\left(x^*\right)}$ be the sub-vector of $\Upsilon$ indexed by $S\left(x^*\right)$, then
    \begin{align*}
        &f_{{\tiny\mbox{BQP}}}\left(x^*,X^*;\Upsilon\right) \\
        &~ =  \textstyle \ldet \left( \strut\left(\strut\Diag(\Upsilon)C\Diag(\Upsilon)\right)\circ X^* + \Diag(\mathbf{e}-x^*)\right) - \textstyle 2\sum_{i=1}^n  x^*_i\log \gamma_i\\
& ~=\ldet \left( \begin{array}{cc}
    \Upsilon_{S\left(x^*\right)} C \left(S(x^*), S(x^*)\right) \Upsilon_{S\left(x^*\right)} &  0 \\
    0  &~ I_{n-s}
\end{array}\right)-\textstyle 2\sum_{i\in S(x^*)}  x^*_i\log \gamma_i\\
         &~=\ldet C \left(S(x^*), S(x^*)\right).
    \end{align*}
    % Thus $z_{{\tiny\mbox{BQP}}}(\Upsilon) \ge f_{{\tiny\mbox{BQP}}}\left(x^*,X^*;\Upsilon\right) = z$.
    \item[\ref{thm:bqp}.ii:] The concavity is essentially a  result of \cite{Anstreicher_BQP_entropy}, with details filled in by \cite[Section 3.6.1]{FLbook}. The continuous differentiability comes from the analyticity of $f_{{\tiny\mbox{BQP}}}\left(x,X;\Upsilon\right)$ in $(x,X,\Upsilon)\in \dom\left(f_{{\tiny\mbox{BQP}}};\Upsilon\right) \times \mathbb{R}_{++}^n$\,.
    \item[\ref{thm:bqp}.iii:] We sketch the proof first:
    \begin{enumerate}
        \item for fixed $(x,X)\in\dom\left(f_{{\tiny\mbox{BQP}}};\Upsilon\right)$, we derive the Hessian of $f_{{\tiny\mbox{BQP}}}\left(x,X;\Upsilon\right)$ with respect to $\log \Upsilon$ and show that it is positive-semidefinite, which implies the convexity of $f_{{\tiny\mbox{BQP}}}\left(x,X;\Upsilon\right)$ in $\log \Upsilon$;
        \item
        % $z_{{\tiny\mbox{BQP}}}(\Upsilon)$ is the point-wise maximum of $f_{{\tiny\mbox{BQP}}}\left(x,X;\Upsilon\right)$ over feasible $(x,X)$ for \ref{BQP} in domain $\dom\left(f_{{\tiny\mbox{BQP}}};\Upsilon\right)$. Thus,   the convexity of $f_{{\tiny\mbox{BQP}}}\left(x,X;\Upsilon\right)$ in $\log \Upsilon$ for all fixed $(x,X)$ implies
        The convexity of $z_{{\tiny\mbox{BQP}}}(\Upsilon)$  in $\log \Upsilon$ then follows because $z_{{\tiny\mbox{BQP}}}(\Upsilon)$ is the point-wise maximum of $f_{{\tiny\mbox{BQP}}}\left(x,X;\Upsilon\right)$ over feasible $(x,X)$ for \ref{BQP} in domain $\dom\left(f_{{\tiny\mbox{BQP}}};\Upsilon\right)$.
    \end{enumerate}

    \smallskip

    The detailed proof is as follows. For convenience, let
    \begin{align*}
    &F_{{\tiny\mbox{BQP}}}(x,X;\Upsilon):= \left(\strut\Diag(\Upsilon)C\Diag(\Upsilon)\right)\circ X+ \Diag(\mathbf{e}-x), \mbox{ and }\\
    &A_{{\tiny\mbox{BQP}}}(X;\Upsilon):=\left(\strut\Diag(\Upsilon)C\Diag(\Upsilon)\right)\circ X.
    \end{align*}
    In the following derivation, we will consider $(x,X)\in \dom\left(\strut f_{{\tiny\mbox{BQP}}};\Upsilon\right)$ fixed, and regard $\Upsilon$ as a variable. Thus, for simplicity, we  write $F_{{\tiny\mbox{BQP}}}(x,X;\Upsilon)$ and  $A_{{\tiny\mbox{BQP}}}(X;\Upsilon)$ as $F_{{\tiny\mbox{BQP}}}(\Upsilon)$ and $A_{{\tiny\mbox{BQP}}}(\Upsilon)$, respectively.

    Let $\check{x}:=x-\mathbf{e}$, and we  use the identities
    % $A_{{\tiny\mbox{BQP}}}(\Upsilon)F_{{\tiny\mbox{BQP}}}(\Upsilon)^{-1} = I +\Diag(\check{x}) F_{{\tiny\mbox{BQP}}}(\Upsilon)^{-1}$ and
      \begin{align}
     &F_{{\tiny\mbox{BQP}}}(\Upsilon)^{-1}A_{{\tiny\mbox{BQP}}}(\Upsilon) = I + F_{{\tiny\mbox{BQP}}}(\Upsilon)^{-1}\Diag(\check{x}), \label{FAbqp}\\ &A_{{\tiny\mbox{BQP}}}(\Upsilon)F_{{\tiny\mbox{BQP}}}(\Upsilon)^{-1} = I + \Diag(\check{x})F_{{\tiny\mbox{BQP}}}(\Upsilon)^{-1}.\label{AFbqp}
    \end{align}

  We first derive the gradient of $f_{{\tiny\mbox{BQP}}}\left(x,X;\Upsilon\right)$ with respect to $\Upsilon$.
    \begin{align*}
         &\textstyle\frac{\partial f_{{\tiny\mbox{BQP}}}\left(x,X;\Upsilon\right)}{\partial \gamma_i}
         = ~ F_{{\tiny\mbox{BQP}}}(\Upsilon)^{-1}\bullet \frac{\partial A_{{\tiny\mbox{BQP}}}(\Upsilon)}{\partial \gamma_i} -2\frac{x_i}{\gamma_i}\\
        & \qquad \qquad= ~F_{{\tiny\mbox{BQP}}}(\Upsilon)^{-1}\bullet \textstyle\frac{1}{\gamma_i}\left(E_{ii}A_{{\tiny\mbox{BQP}}}(\Upsilon)+A_{{\tiny\mbox{BQP}}}(\Upsilon) E_{ii}\right)-2\textstyle\frac{x_i}{\gamma_i}\\
        % =& ~F_{{\tiny\mbox{BQP}}}(\Upsilon)^{-1}\bullet \frac{1}{\gamma_i}\left(\mathbf{e}_i \mathbf{e}_i^\top A_{{\tiny\mbox{BQP}}}(\Upsilon)+A_{{\tiny\mbox{BQP}}}(\Upsilon) \mathbf{e}_i \mathbf{e}_i^\top\right)-2\frac{x_i}{\gamma_i}\\
    &\qquad\qquad = ~\textstyle\frac{1}{\gamma_i}\left(A_{{\tiny\mbox{BQP}}}(\Upsilon)_{i\cdot}F_{{\tiny\mbox{BQP}}}(\Upsilon)^{-1}_{\cdot i}+F_{{\tiny\mbox{BQP}}}(\Upsilon)^{-1}_{i\cdot}A_{{\tiny\mbox{BQP}}}(\Upsilon)_{\cdot i}-2x_i\right)\\
    &\qquad\qquad =~ \textstyle \frac{1}{\gamma_i}\left(2F_{{\tiny\mbox{BQP}}}(\Upsilon)^{-1}_{i\cdot}A_{{\tiny\mbox{BQP}}}(\Upsilon)_{\cdot i}-2x_i\right),
    \end{align*}
    where the last identity follows from the symmetry of  $F_{{\tiny\mbox{BQP}}}(\Upsilon)$ and $A_{{\tiny\mbox{BQP}}}(\Upsilon)$. Then, applying \eqref{FAbqp}, we obtain
    \begin{align*}
        &\textstyle\frac{\partial f_{{\tiny\mbox{BQP}}}\left(x,X;\Upsilon\right)}{\partial \Upsilon}=
         % = 2\Diag(\Upsilon)^{-1}\left(\diag\left(F_{{\tiny\mbox{BQP}}}(\Upsilon)^{-1}A_{{\tiny\mbox{BQP}}}(\Upsilon)\right)-x\right)\\
        %&\qquad \qquad= 2\Diag(\Upsilon)^{-1}\left(\diag\left(F_{{\tiny\mbox{BQP}}}(\Upsilon)^{-1}\left(F_{{\tiny\mbox{BQP}}}(\Upsilon)+\Diag(x-\mathbf{e}) \right)\right)-x\right)\\
    %&\qquad\qquad=
2\Diag(\Upsilon)^{-1}\left(\diag\left(F_{{\tiny\mbox{BQP}}}(\Upsilon)^{-1}\Diag(\check{x})\right)-\check{x}\right).
    \end{align*}
   Next, we derive the Hessian of $f_{{\tiny\mbox{BQP}}}\left(x,X;\Upsilon\right)$ with respect to $\Upsilon$. Note that
    \begin{align*}
    &\textstyle\frac{1}{2}\frac{\partial^2 f_{{\tiny\mbox{BQP}}}\left(x,X;\Upsilon\right)}{\partial \Upsilon \partial \gamma_i}
    = \frac{1}{2}\frac{\partial}{\partial\gamma_i}\left(\frac{\partial f_{{\tiny\mbox{BQP}}}\left(x,X;\Upsilon\right)}{\partial \Upsilon}\right)\\
    &~ = \textstyle \frac{\partial \Diag(\Upsilon)^{-1}}{\partial\gamma_i} \left(\diag\left(F_{{\tiny\mbox{BQP}}}(\Upsilon)^{-1}\Diag(\check{x})\right)-\check{x}\right)
    \\
    &~~~~  +\Diag(\Upsilon)^{-1}  \textstyle\frac{\partial \left(\diag\left(F_{{\tiny\mbox{BQP}}}(\Upsilon)^{-1}\Diag(\check{x})\right)-\check{x} \right)}{\partial\gamma_i} \\
    &=  \textstyle\frac{\partial \Diag(\Upsilon)^{-1}}{\partial\gamma_i} \left(\diag\left(F_{{\tiny\mbox{BQP}}}(\Upsilon)^{-1}\Diag(\check{x})\right)-\check{x}\right)\\
    &~~~~ -\Diag(\Upsilon)^{-1}  \diag\left(F_{{\tiny\mbox{BQP}}}(\Upsilon)^{-1} \textstyle\frac{\partial F_{{\tiny\mbox{BQP}}}(\Upsilon)}{\partial\gamma_i}F_{{\tiny\mbox{BQP}}}(\Upsilon)^{-1}\Diag(\check{x})\right) \\
    &= -\textstyle\frac{1}{\gamma_i^2}E_{ii}\left(\diag\left(F_{{\tiny\mbox{BQP}}}(\Upsilon)^{-1}\Diag(\check{x})\right)-\check{x}\right)\\
    &~~~~  - \Diag(\Upsilon)^{-1}\diag\left(\!F_{{\tiny\mbox{BQP}}}(\Upsilon)^{-1}\textstyle\frac{\left(E_{ii}A_{{\tiny\mbox{BQP}}}(\Upsilon)+A_{{\tiny\mbox{BQP}}}(\Upsilon) E_{ii}\right)}{\gamma_i}
    % &\qquad\left.\vphantom{\frac{\left(E_{ii}A+A E_{ii}\right)}{\gamma_i}} F_{{\tiny\mbox{BQP}}}(\Upsilon)^{-1}\Diag(\check{x})\!\right)\\
  F_{{\tiny\mbox{BQP}}}(\Upsilon)^{-1}\Diag(\check{x})\!\right).
%     &= -\textstyle\frac{1}{\gamma_i^2}E_{ii}\left(\diag\left(F_{{\tiny\mbox{BQP}}}(\Upsilon)^{-1}\Diag(\check{x})\right)-\check{x}\right)\\
%     &~~ -\textstyle\frac{1}{\gamma_i}\Diag(\Upsilon)^{-1}\diag\left(F_{{\tiny\mbox{BQP}}}(\Upsilon)^{-1}\left(E_{ii}A_{{\tiny\mbox{BQP}}}(\Upsilon)+A_{{\tiny\mbox{BQP}}}(\Upsilon) E_{ii}\right)
% %    &\qquad\left.\vphantom{\left(E_{ii}A+A E_{ii}\right)} F_{{\tiny\mbox{BQP}}}(\Upsilon)^{-1}\Diag(\check{x})\right).
%      F_{{\tiny\mbox{BQP}}}(\Upsilon)^{-1}\Diag(\check{x})\right).
\end{align*}
The first term in this last expression can be reformulated as
 \begin{align*}
    &-\textstyle\frac{1}{\gamma_i^2}E_{ii}\left(\diag\left(F_{{\tiny\mbox{BQP}}}(\Upsilon)^{-1}\Diag(\check{x})\right)-\check{x}\right)\\
     &~~= -  \Diag\left(\Upsilon\right)^{-1} \Diag\left(\diag\left(F_{{\tiny\mbox{BQP}}}(\Upsilon)^{-1}\Diag(\check{x})\right)-\check{x}\right)
     %&\quad
     \Diag\left(\Upsilon\right)^{-1} \mathbf{e}_i\,,
 \end{align*}
 while for the second term, we use \eqref{FAbqp} and \eqref{AFbqp}, and we obtain
 \begin{align*}
     &\diag\left(F_{{\tiny\mbox{BQP}}}(\Upsilon)^{-1}\left(E_{ii}A_{{\tiny\mbox{BQP}}}(\Upsilon)+A_{{\tiny\mbox{BQP}}}(\Upsilon) E_{ii}\right) F_{{\tiny\mbox{BQP}}}(\Upsilon)^{-1}\Diag(\check{x})\right)\\
     % &~=\diag\left(F_{{\tiny\mbox{BQP}}}(\Upsilon)^{-1}E_{ii}A_{{\tiny\mbox{BQP}}}(\Upsilon) F_{{\tiny\mbox{BQP}}}(\Upsilon)^{-1}\Diag(\check{x})\right)\\
    % & \quad +\diag\left(F_{{\tiny\mbox{BQP}}}(\Upsilon)^{-1}A_{{\tiny\mbox{BQP}}}(\Upsilon)E_{ii} F_{{\tiny\mbox{BQP}}}(\Upsilon)^{-1}\Diag(\check{x})\right)\\
     % & ~=\diag\left(F_{{\tiny\mbox{BQP}}}(\Upsilon)^{-1}E_{ii}\left(I + \Diag(\check{x}) F_{{\tiny\mbox{BQP}}}(\Upsilon)^{-1}\right)\Diag(\check{x})\right) \\
     % &  \quad +\diag\left(\left(I+ F_{{\tiny\mbox{BQP}}}(\Upsilon)^{-1}\Diag(\check{x})\right)E_{ii} F_{{\tiny\mbox{BQP}}}(\Upsilon)^{-1}\Diag(\check{x})\right)\\
     &~= \diag\left(F_{{\tiny\mbox{BQP}}}(\Upsilon)^{-1}E_{ii}\Diag(\check{x})\right)+ \diag\left( E_{ii}F_{{\tiny\mbox{BQP}}}(\Upsilon)^{-1}\Diag(\check{x})\right)\\
     &~~ \quad +\diag\left(F_{{\tiny\mbox{BQP}}}(\Upsilon)^{-1}E_{ii} \Diag(\check{x}) F_{{\tiny\mbox{BQP}}}(\Upsilon)^{-1}\Diag(\check{x})\right)\\
     &~~ \quad +\diag\left( F_{{\tiny\mbox{BQP}}}(\Upsilon)^{-1}\Diag(\check{x})E_{ii} F_{{\tiny\mbox{BQP}}}(\Upsilon)^{-1}\Diag(\check{x})\right)\\
     & ~=2\left(x_i-1\right)\left( \left(F_{{\tiny\mbox{BQP}}}(\Upsilon)^{-1}\right)_{ii} \mathbf{e}_i\right)\\
     &~~ \quad + 2\left(x_i-1\right)\left(\diag\left(F_{{\tiny\mbox{BQP}}}(\Upsilon)^{-1}E_{ii} F_{{\tiny\mbox{BQP}}}(\Upsilon)^{-1}\Diag(\check{x})\right)\right)\\
     % & ~=2 \Diag\left(\check{x}\right) \Diag\left(\diag\left(F_{{\tiny\mbox{BQP}}}(\Upsilon)^{-1}\right)\right) \mathbf{e}_i\\
     % &  \quad +2 \Diag\left(\check{x}\right) \diag\left(  \left(F_{{\tiny\mbox{BQP}}}(\Upsilon)^{-1}\circ F_{{\tiny\mbox{BQP}}}(\Upsilon)^{-1}\right)_{\cdot i}\right) (x_i-1)\\
     & ~=2 \Diag\left(\check{x}\right) \Diag\left(\diag\left(F_{{\tiny\mbox{BQP}}}(\Upsilon)^{-1}\right)\right) \mathbf{e}_i\\
     &~~  \quad +2 \Diag\left(\check{x}\right)   \left(F_{{\tiny\mbox{BQP}}}(\Upsilon)^{-1}\circ F_{{\tiny\mbox{BQP}}}(\Upsilon)^{-1}\right) \Diag(\check{x}) \mathbf{e}_i\,,
 \end{align*}
    which implies that
    \begin{align*}
    & - \Diag(\Upsilon)^{-1}\diag\left(\!F_{{\tiny\mbox{BQP}}}(\Upsilon)^{-1}\textstyle\frac{\left(E_{ii}A_{{\tiny\mbox{BQP}}}(\Upsilon)+A_{{\tiny\mbox{BQP}}}(\Upsilon) E_{ii}\right)}{\gamma_i}
  F_{{\tiny\mbox{BQP}}}(\Upsilon)^{-1}\Diag(\check{x})\!\right)\\
        % &\textstyle\frac{1}{\gamma_i}\Diag(\Upsilon)^{-1}\diag\left(F_{{\tiny\mbox{BQP}}}(\Upsilon)^{-1}\left(E_{ii}A_{{\tiny\mbox{BQP}}}(\Upsilon)+A_{{\tiny\mbox{BQP}}}(\Upsilon) E_{ii}\right) F_{{\tiny\mbox{BQP}}}(\Upsilon)^{-1}\Diag(\check{x})\right)\\
    & ~~~=-2 \Diag(\Upsilon)^{-1} \Diag\left(\check{x}\right) \Diag\left(\diag\left(F_{{\tiny\mbox{BQP}}}(\Upsilon)^{-1}\right)\right) \Diag(\Upsilon)^{-1}\mathbf{e}_i \\
    & ~~ \quad - 2 \Diag(\Upsilon)^{-1} \Diag\left(\check{x}\right)   \left(F_{{\tiny\mbox{BQP}}}(\Upsilon)^{-1}\circ F_{{\tiny\mbox{BQP}}}(\Upsilon)^{-1}\right) \Diag(\check{x})\Diag(\Upsilon)^{-1}\mathbf{e}_i\,.
    \end{align*}
   Then, we obtain
    \begin{align*}
        &\textstyle\frac{\partial^2 f_{{\tiny\mbox{BQP}}}\left(x,X;\Upsilon\right)}{\partial \Upsilon ^2 }\\
        &  ~=- 2 \Diag\left(\Upsilon\right)^{-1} \Diag\left(\diag\left(F_{{\tiny\mbox{BQP}}}(\Upsilon)^{-1}\Diag(\check{x})\right)-\check{x}\right) \Diag\left(\Upsilon\right)^{-1}   \\
        &~~~\quad -4 \Diag(\Upsilon)^{-1} \Diag\left(\check{x}\right) \Diag\left(\diag\left(F_{{\tiny\mbox{BQP}}}(\Upsilon)^{-1}\right)\right) \Diag(\Upsilon)^{-1} \\
        &~~~~~~\quad -4 \Diag(\Upsilon)^{-1} \Diag\left(\check{x}\right)   \left(F_{{\tiny\mbox{BQP}}}(\Upsilon)^{-1}\circ F_{{\tiny\mbox{BQP}}}(\Upsilon)^{-1}\right)\Diag(\check{x}) \Diag(\Upsilon)^{-1}.
    \end{align*}
   % With the formula of $\frac{\partial^2 f_{{\tiny\mbox{BQP}}}\left(x,X;\Upsilon\right)}{\partial \Upsilon ^2 }$, we can derive $\frac{\partial^2 f_{{\tiny\mbox{BQP}}}\left(x,X;\Upsilon\right)}{\partial \left(\log \Upsilon\right) ^2}$ by
   Finally, we have
    \begin{align*}
        &\textstyle\frac{\partial^2 f_{{\tiny\mbox{BQP}}}\left(x,X;\Upsilon\right)}{\partial \left(\log \Upsilon\right) ^2} =\Diag(\Upsilon) \textstyle\frac{\partial f_{{\tiny\mbox{BQP}}}\left(x,X;\Upsilon\right)}{\partial \Upsilon } + \Diag(\Upsilon)\frac{\partial^2 f_{{\tiny\mbox{BQP}}}\left(x,X;\Upsilon\right)}{\partial \Upsilon ^2 } \Diag(\Upsilon)\\
        & ~= -4 \Diag\left(\check{x}\right) \Diag\left(\diag\left(F_{{\tiny\mbox{BQP}}}(\Upsilon)^{-1}\right)\right) \\
        &~~~~\quad - 4  \Diag\left(\check{x}\right)   \left(F_{{\tiny\mbox{BQP}}}(\Upsilon)^{-1}\circ F_{{\tiny\mbox{BQP}}}(\Upsilon)^{-1}\right) \Diag(\check{x})\\
        &~ =4 \Diag\left(\mathbf{e}-x\right) \Diag\left(\diag\left(F_{{\tiny\mbox{BQP}}}(\Upsilon)^{-1}\right)\right) \\
        &~~~~\quad -4  \Diag\left(\mathbf{e}-x\right)   \left(F_{{\tiny\mbox{BQP}}}(\Upsilon)^{-1}\circ F_{{\tiny\mbox{BQP}}}(\Upsilon)^{-1}\right) \Diag(\mathbf{e}-x).
    \end{align*}
\smallskip
    Next, we will show the positive semidefiniteness of $\frac{\partial^2 f_{{\tiny\mbox{BQP}}}\left(x,X;\Upsilon\right)}{\partial \left(\log \Upsilon\right) ^2} $ for all $ 0\le x \le \mathbf{e}, X \succeq 0$ such that $(x,X) \in \dom\left(f_{{\tiny\mbox{BQP}}};\Upsilon\right)$. Note that we will not require $(x,X)$ to be feasible for \ref{BQP}. We analyse  two cases.

    \smallskip
    \underline{Case $1$}: when $0\le x<\mathbf{e}$ and $X\succeq 0$,  let $D_{{\tiny\mbox{BQP}}}(x):=\left(\Diag(\mathbf{e}-x)\right)^{1/2} \succ 0$, and let $H_{{\tiny\mbox{BQP}}}(x,X;\Upsilon):=$
    $\left(D_{{\tiny\mbox{BQP}}}(x)\right)^{-1} A_{{\tiny\mbox{BQP}}}(\Upsilon) \left(D_{{\tiny\mbox{BQP}}}(x)\right)^{-1}\succeq 0$. Again, for simplicity, we write $D_{{\tiny\mbox{BQP}}}(x)$ and $H_{{\tiny\mbox{BQP}}}(x,X;\Upsilon)$ as $D_{{\tiny\mbox{BQP}}}$ and $H_{{\tiny\mbox{BQP}}}(\Upsilon)$, respectively. First, we note that
    \begin{align*}
        D_{{\tiny\mbox{BQP}}}  F_{{\tiny\mbox{BQP}}}(\Upsilon)^{-1}D_{{\tiny\mbox{BQP}}} =\left( D_{{\tiny\mbox{BQP}}}^{-1} A_{{\tiny\mbox{BQP}}}(\Upsilon) D_{{\tiny\mbox{BQP}}}^{-1} + I\right)^{-1}.
    \end{align*}
    Then, we have
    \begin{align*}
        & \textstyle{\frac{1}{4}} \frac{\partial^2 f_{{\tiny\mbox{BQP}}}\left(x,X;\Upsilon\right)}{\partial \left(\log \Upsilon\right) ^2} \\
        &~= \Diag\left(\diag\left( D_{{\tiny\mbox{BQP}}}  F_{{\tiny\mbox{BQP}}}(\Upsilon)^{-1}D_{{\tiny\mbox{BQP}}}\right)\right) \\
        & ~~\quad -\Diag\left( D_{{\tiny\mbox{BQP}}}  F_{{\tiny\mbox{BQP}}}(\Upsilon)^{-1}D_{{\tiny\mbox{BQP}}}\right) \circ \Diag\left( D_{{\tiny\mbox{BQP}}}  F_{{\tiny\mbox{BQP}}}(\Upsilon)^{-1}D_{{\tiny\mbox{BQP}}}\right)\\
        & ~=\left( D_{{\tiny\mbox{BQP}}}  F_{{\tiny\mbox{BQP}}}(\Upsilon)^{-1}D_{{\tiny\mbox{BQP}}}\right)\circ I \\
        & ~~~\quad -\Diag\left( D_{{\tiny\mbox{BQP}}}  F_{{\tiny\mbox{BQP}}}(\Upsilon)^{-1}D_{{\tiny\mbox{BQP}}}\right) \circ \Diag\left( D_{{\tiny\mbox{BQP}}}  F_{{\tiny\mbox{BQP}}}(\Upsilon)^{-1}D_{{\tiny\mbox{BQP}}}\right)\\
        & ~=\left(H_{{\tiny\mbox{BQP}}}(\Upsilon)+I \right)^{-1}\circ I - \left(H_{{\tiny\mbox{BQP}}}(\Upsilon)+I \right)^{-1} \circ \left(H_{{\tiny\mbox{BQP}}}(\Upsilon)+I \right)^{-1}\\
        & ~=\left(H_{{\tiny\mbox{BQP}}}(\Upsilon)+I \right)^{-1} \circ \left(I - \left(H_{{\tiny\mbox{BQP}}}(\Upsilon)+I \right)^{-1} \right)
        \succeq~ 0.
    \end{align*}
    The last inequality holds because $H_{{\tiny\mbox{BQP}}}(\Upsilon)+I  \! \succ \! 0$ and
    the Schur Product Theorem (\cite[p. 15, Theorem VII]{Schur}).
    % because the Hadamard product of two positive semidefinite matrices is positive semidefinite.

    \smallskip
    \underline{Case $2$}: now, we discuss the general case $0\le x \le \mathbf{e}, X\succeq 0$. Note that for $\Upsilon \in \mathbb{R}^n_{++}$, $\frac{\partial f^2_{{\tiny\mbox{BQP}}}\left(x,X;\Upsilon\right)}{\partial \left(\log \Upsilon\right)^2}$ is analytic in $0\le x \le \mathbf{e}, X \succeq 0$ such that $(x,X) \in \dom\left(f_{{\tiny\mbox{BQP}}};\Upsilon\right)$.  Therefore, given $0\le x \le \mathbf{e}, X \succeq 0$, assume that $\frac{\partial f^2_{{\tiny\mbox{BQP}}}\left(x,X;\Upsilon\right)}{\partial \left(\log \Upsilon\right)^2}\not \succeq 0$. Then by the analyticity (continuity) of $\frac{\partial f^2_{{\tiny\mbox{BQP}}}\left(x,X;\Upsilon\right)}{\partial \left(\log \Upsilon\right)^2}$, there exists small enough $\epsilon>0$ such that {for any $0\le x' \le \mathbf{e}, X' \succeq 0$ in the intersection of the neighbourhood
    \[
    \mathcal{N}_\epsilon(x,X):= \left\{(x',X'): \|x-x'\|_{\infty}+ \|X-X'\|_F\le \epsilon\right\},
    \]
    (where $\|\cdot\|_{\infty}$ is the vector infinity-norm, and $\|\cdot\|_F$ is the Frobenius norm) and $\{(x',X'): 0\le x' \le \mathbf{e}, X' \succeq 0, (x',X') \in \dom\left(f_{{\tiny\mbox{BQP}}};\Upsilon\right)\}$, we have $\frac{\partial f^2_{{\tiny\mbox{BQP}}}\left(x',X';\Upsilon\right)}{\partial \left(\log \Upsilon\right)^2} \not \succeq 0$.} On the other hand, this intersection contains some $(x',X')$ such that $0\le x'<\mathbf{e}$, $X'\succeq 0$, e.g. $(x',X')=(x-\sum_{i:x_i=1} \epsilon e_i, X)$. This is a contradiction to Case $1$.   \smallskip

   In conclusion, for each fixed $(x,X)\in \{(x,X): 0\le x \le \mathbf{e}, X \succeq 0, (x,X) \in \dom\left(f_{{\tiny\mbox{BQP}}};\Upsilon\right)\}$, we have that $f_{{\tiny\mbox{BQP}}}\left(x,X;\Upsilon\right)$ is convex in $\log \Upsilon$. In particular, for $(x,X)\in \dom\left(f_{{\tiny\mbox{BQP}}};\Upsilon\right)$ and feasible to \ref{BQP}, $f_{{\tiny\mbox{BQP}}}\left(x,X;\Upsilon\right)$ is convex in $\log \Upsilon$. Finally, as $z_{{\tiny\mbox{BQP}}}(\Upsilon)$ is the point-wise maximum of $f_{{\tiny\mbox{BQP}}}\left(x,X;\Upsilon\right)$ over all such $(x,X)$, then $z_{{\tiny\mbox{BQP}}}(\Upsilon)$ is convex in $\log \Upsilon$. $\hfill \square$

\section{linx bound}\label{sec:linx}
The linx bound was first analyzed and developed in \cite{Kurt_linx} (see \cite[Section 3.3]{FLbook} for more details). For  $\Upsilon \in \mathbb{R}_{++}^n$ and  $x\in[0,1]^n$,
%and $\gamma>0$
 we now define
 % $f_{{\tiny\mbox{linx}}}(x;\Upsilon) :=$
\begin{align*}
f_{{\tiny\mbox{linx}}}(x;\Upsilon) :=& \textstyle\frac{1}{2}\left(\strut\ldet \left( \Diag(\Upsilon) C\Diag(x)C \Diag(\Upsilon)+\Diag(\mathbf{e}-x) \right)\right) \\ &\quad \textstyle -\sum_{i=1}^n x_i\log \gamma_i
\end{align*}
with %$\dom\left(f_{{\tiny\mbox{linx}}};\Upsilon\right):=$
\begin{align*}
 \dom\left(f_{{\tiny\mbox{linx}}};\Upsilon\right)\!:=\!  \left\{\strut x \in \mathbb{R}^n \!\,:\!\, \Diag(\Upsilon) C\Diag(x)C \Diag(\Upsilon)+\Diag(\mathbf{e}-x) \succ 0 \right\}.
\end{align*}
We then define the \emph{g-scaled linx  bound}  %is
\begin{equation}\tag{linx}\label{linx}
\begin{array}{ll}
	z_{{\tiny\mbox{linx}}}(\Upsilon):=\max \left\{\strut f_{{\tiny\mbox{linx}}}(x;\Upsilon)
	~:~ \mathbf{e}^{\top}x=s,~ 0\leq x\leq \mathbf{e},~ Ax\leq b\strut \right\}.
\end{array}
\end{equation}
We say that $x$ is feasible to \ref{linx} if $x$ satisfies all the constraints in \ref{linx}.

% Note that this is applying the BQP bound to the right-hand-side of the identity \eqref{gscale:eqn} where $t_1=1, t_2=0$, we \emph{cannot} interpret this bound as

% Note we can interpret \ref{linx} as applying the
% un-scaled linx bound to the row-scaled
% matrix $\Diag(\Upsilon)C$, because we would lose symmetry we can interpret \ref{linx} as applying the
% un-scaled linx bound to the row-scaled
% matrix $\Diag(\Upsilon)C$,  because although we lose symmetry in the matrix, we do not lose symmetry in the upper bound formula $f_{{\tiny\mbox{linx}}}(x;\Upsilon)$.

\medskip

It is very important to note, in contrast to g-scaling for the \ref{BQP} bound, that we are \emph{not} applying the ordinary \ref{linx} bound to a symmetric scaling of $C$.
In this way, g-scaling for the \ref{linx} bound is more subtle.
Rather, we are symmetrically scaling $\Diag(\Upsilon) C\Diag(x)C \Diag(\Upsilon)$.
This point would not apply to o-scaling, as scalars commute through matrix multiplication.

\vbox{
\begin{thm}\label{thm:linx}
For all $\Upsilon \in \mathbb{R}_{++}^n$ in \ref{linx}, the following hold:
\begin{itemize}
\item[\ref{thm:linx}.i.] $z_{{\tiny\mbox{linx}}}(\Upsilon)$ is a valid upper bound for the optimal value of \ref{CMESP}, i.e.,\newline $z(C,s,A,b)\leq z_{{\tiny\mbox{linx}}}(\Upsilon)$;
\item[\ref{thm:linx}.ii.] the function $f_{{\tiny\mbox{linx}}}(x;\Upsilon)$ is concave in $x$ on $\dom\left(f_{{\tiny\mbox{linx}}};\Upsilon\right)$ and continuously differentiable in $(x,\Upsilon)$ on $\dom\left(f_{{\tiny\mbox{linx}}};\Upsilon\right)\times \mathbb{R}^n_{++}$\,;
\item[\ref{thm:linx}.iii.] \label{linx.iii} for fixed $x\in \dom\left(f_{{\tiny\mbox{linx}}};\Upsilon\right)$, $f_{{\tiny\mbox{linx}}}(x;\Upsilon)$ is convex in $\log \Upsilon$, and thus $z_{{\tiny\mbox{linx}}}(\Upsilon)$ is convex in $\log \Upsilon$.
\end{itemize}
\end{thm}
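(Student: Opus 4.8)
\noindent The plan is to dispatch \ref{thm:linx}.i and \ref{thm:linx}.ii by short arguments modeled on \ref{thm:bqp}.i--ii, and then to obtain \ref{thm:linx}.iii as an immediate corollary of the gradient/Hessian computation already carried out in the proof of \ref{thm:bqp}.iii. For \ref{thm:linx}.i I would, exactly as in \ref{thm:bqp}.i, exhibit a point feasible for \ref{linx} whose objective value equals $z(C,s,A,b)$: take an optimal $x^*\in\{0,1\}^n$ for \ref{CMESP} with support $S:=S(x^*)$, $|S|=s$, assuming (as we may) $z(C,s,A,b)>-\infty$, so that $C[S,S]\succ 0$. Using the symmetry of $C$ one has $\Diag(\Upsilon)C\Diag(x^*)C\Diag(\Upsilon)=VV^\top$ with $V:=\Diag(\Upsilon)C_{\cdot S}\in\mathbb{R}^{n\times s}$, so the matrix inside the $\ldet$ in $f_{\mathrm{linx}}(x^*;\Upsilon)$ is $VV^\top+\Diag(\mathbf{e}-x^*)$; in the block form induced by $S$ this reads $[\,V_1V_1^\top,\ V_1V_2^\top\,;\ V_2V_1^\top,\ V_2V_2^\top+I\,]$ with $V_1=\Diag(\Upsilon_S)C[S,S]$ nonsingular, and block elimination gives determinant $\det(V_1)^2=\big(\prod_{i\in S}\gamma_i\big)^2(\det C[S,S])^2$. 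Hence $f_{\mathrm{linx}}(x^*;\Upsilon)=\sum_{i\in S}\log\gamma_i+\ldet C[S,S]-\sum_{i\in S}\log\gamma_i=\ldet C[S,S]=z(C,s,A,b)$, and since $x^*$ is feasible for \ref{linx} and lies in $\dom(f_{\mathrm{linx}};\Upsilon)$, the upper bound is valid.

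\noindent For \ref{thm:linx}.ii the point is that $x\mapsto\Diag(\Upsilon)C\Diag(x)C\Diag(\Upsilon)+\Diag(\mathbf{e}-x)$ is \emph{affine} in $x$ (every entry is affine in $x$): therefore $\dom(f_{\mathrm{linx}};\Upsilon)$ is convex (a preimage of $\mathbb{S}^n_{++}$ under an affine map), $x\mapsto\ldet(\cdot)$ composed with this affine map is concave there, and adding the linear term $-\sum_i x_i\log\gamma_i$ preserves concavity. For continuous differentiability in $(x,\Upsilon)$: the bracketed matrix is polynomial in $(x,\Upsilon)$, $\ldet$ is real-analytic on $\mathbb{S}^n_{++}$, and $(x,\Upsilon)\mapsto\sum_i x_i\log\gamma_i$ is real-analytic on $\mathbb{R}^n\times\mathbb{R}^n_{++}$, so $f_{\mathrm{linx}}$ is real-analytic, hence $C^1$, on $\dom(f_{\mathrm{linx}};\Upsilon)\times\mathbb{R}^n_{++}$.

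\noindent \ref{thm:linx}.iii is the substantive part, and the plan is to reduce it to the proof of \ref{thm:bqp}.iii rather than recompute anything. Fix $x$ feasible for \ref{linx} with $x\in\dom(f_{\mathrm{linx}};\Upsilon)$; then $0\le x\le\mathbf{e}$, so in particular $x\ge 0$ and $M:=C\Diag(x)C=(C\Diag(x)^{1/2})(C\Diag(x)^{1/2})^\top\succeq 0$, and
\[
f_{\mathrm{linx}}(x;\Upsilon)=\tfrac12\Big(\ldet\big(\Diag(\Upsilon)M\Diag(\Upsilon)+\Diag(\mathbf{e}-x)\big)-2\textstyle\sum_i x_i\log\gamma_i\Big).
\]
This is precisely $\tfrac12$ times an expression of the form of $f_{\mathrm{BQP}}(x,X;\Upsilon)$, with the (also positive-semidefinite) matrix $M$ here in the role of $C\circ X$ there and $F_{\mathrm{linx}}(\Upsilon):=\Diag(\Upsilon)M\Diag(\Upsilon)+\Diag(\mathbf{e}-x)$ in the role of $F_{\mathrm{BQP}}(\Upsilon)$. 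The computation in the proof of \ref{thm:bqp}.iii used only that (a) the matrix inside the $\ldet$ equals $M$ symmetrically scaled by $\Diag(\Upsilon)$ plus $\Diag(\mathbf{e}-x)$, (b) $M\succeq 0$, and (c) $0\le x\le\mathbf{e}$ (the latter two for the two-case, $D=\Diag(\mathbf{e}-x)^{1/2}$ argument and its analytic-continuation patch); all three hold here, so the same computation gives that $\partial^2 f_{\mathrm{linx}}(x;\Upsilon)/\partial(\log\Upsilon)^2$ equals one half of the manifestly positive-semidefinite matrix produced there via the Schur product theorem. Thus $f_{\mathrm{linx}}(x;\Upsilon)$ is convex in $\log\Upsilon$ for each such fixed $x$, and, being the pointwise maximum over the ($\Upsilon$-independent) feasible set of \ref{linx} of such functions, $z_{\mathrm{linx}}(\Upsilon)$ is convex in $\log\Upsilon$, exactly as in the final step of the proof of \ref{thm:bqp}.iii.

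\noindent I expect the only real obstacle to be conceptual rather than computational: one must notice that although $M=C\Diag(x)C$ depends on $x$, it is a \emph{constant} positive-semidefinite matrix once $x$ is fixed, so that g-scaled \ref{linx} has the same algebraic skeleton as g-scaled \ref{BQP} (up to the harmless overall factor $\tfrac12$) and inherits its convexity in $\log\Upsilon$. If a self-contained treatment were preferred, the alternative is simply to repeat the gradient and Hessian calculations of the proof of \ref{thm:bqp}.iii verbatim with $F_{\mathrm{linx}}$ in place of $F_{\mathrm{BQP}}$; this introduces no new difficulty beyond bookkeeping.
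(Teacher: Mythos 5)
Your proposal is correct, and for parts \emph{i} and \emph{ii} it is essentially the paper's argument: part \emph{i} exhibits the optimal $x^*$ as a feasible point of \ref{linx} with objective value $\ldet C[S,S]$ via a block-determinant/Schur-complement computation (your factorization $VV^\top$ with $V=\Diag(\Upsilon)C_{\cdot S}$ is the same calculation the paper writes with $\tilde C=\Diag(\Upsilon)C$), and part \emph{ii} follows from affinity of $x\mapsto \Diag(\Upsilon)C\Diag(x)C\Diag(\Upsilon)+\Diag(\mathbf{e}-x)$ plus concavity of $\ldet$ and analyticity (the paper simply cites prior work for the concavity, so your one-line argument is a harmless self-contained substitute). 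The one place you genuinely diverge is part \emph{iii}: the paper repeats the entire gradient/Hessian derivation and the two-case positive-semidefiniteness argument verbatim with $A_{{\tiny\mbox{linx}}}(\Upsilon)=\Diag(\Upsilon)C\Diag(x)C\Diag(\Upsilon)$ in place of $A_{{\tiny\mbox{BQP}}}(\Upsilon)=\Diag(\Upsilon)(C\circ X)\Diag(\Upsilon)$, arriving at a Hessian in $\log\Upsilon$ that is exactly one half of the \ref{BQP} one; you instead observe that once $x$ is fixed, $M:=C\Diag(x)C\succeq 0$ is a constant PSD matrix and $f_{{\tiny\mbox{linx}}}$ is $\tfrac12$ times the same functional form, so the \ref{BQP} computation applies with $M$ in the role of $C\circ X$. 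That reduction is valid: the paper's \ref{BQP} derivation uses only $A(\Upsilon)=\Diag(\Upsilon)M\Diag(\Upsilon)$ with $M\succeq 0$ constant, $0\le x\le\mathbf{e}$, and $F=A+\Diag(\mathbf{e}-x)\succ 0$ (and the Case-2 perturbation $x'=x-\sum_{i:x_i=1}\epsilon e_i$ keeps $M$ fixed and only enlarges $\Diag(\mathbf{e}-x)$, so it stays in the domain). Your route buys a unified statement covering both bounds at the cost of having to check that the \ref{BQP} proof never used any property of $C\circ X$ beyond positive semidefiniteness; the paper's route is self-contained but redundant. Both close with the same pointwise-maximum argument over the ($\Upsilon$-independent) feasible set.
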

}

\end{itemize}
\end{proof}

\begin{remark}
    \cite{Kurt_linx} established Theorem \ref{thm:linx}.\emph{i} for $\Upsilon:=\gamma \mathbf{e}$, with $\gamma\in\mathbb{R}_{++}$\,. We generalize this result to $\Upsilon\in \mathbb{R}^n_{++}$.
    The concavity in Theorem \ref{thm:linx}.\emph{ii}
is a result of \cite{Kurt_linx}, with details
filled in by \cite{FLbook}. Theorem \ref{thm:linx}.\emph{iii}
generalizes a result of \cite{chen_mixing}, where it is
established only for o-scaling: i.e., on $\{ \Upsilon= \gamma \mathbf{e} ~:~ \gamma\in \mathbb{R}_{++}\}$. The proof of Theorem \ref{thm:linx}.\emph{iii} requires new techniques (see the below). As in the case of the  \ref{BQP} bound, the result is quite important as it enables the use of readily available quasi-Newton methods (like BFGS) for finding the globally optimal g-scaling for the \ref{linx} bound.
\end{remark}

A small example of how g-scaling can improve upon o-scaling for the \ref{linx} bound can be found in the Appendix.

\begin{proof}[Theorem \ref{thm:linx}]
\begin{itemize}
    \item[\ref{thm:linx}.i:] It is enough to prove that there is a feasible solution to \ref{linx} with  objective value equal to the optimal value of \ref{CMESP}. In fact, let $x^* \in \{0,1\}^n$ be one optimal solution to \ref{CMESP} with support $S\left(x^*\right)$, and define $X^* := x^*\left(x^*\right)^\top$. Without loss of generality, we assume that $S\left(x^*\right)=\{1,  \ldots, s\}$, i.e., $x^* = \left(\!\begin{array}{c}
         \mathbf{e}_{s}  \\
         0
    \end{array}\!\right)$.
    Let $T(x^*): = N \backslash S(x^*)$ be the complementary set of $S(x^*)$. For convenience, we denote $\tilde{C}:=  \Diag(\Upsilon) C$, $\tilde{C}_{ST}:=\tilde{C}(S(x^*), T(x^*))$, and so on. Note that $\tilde{C}$ is not symmetric. Also, note that  $\tilde{C}$ depends on $\Upsilon$, and  $\tilde{C}_{ST}$ depends on $\Upsilon, x^*, S(x^*)$, and $T(x^*)$. We can write
\begin{align*}
\tilde{C} \operatorname{Diag}(x) \tilde{C}^\top
\!=\!&\left(\begin{array}{ll}
\tilde{C}_{S S
} & \tilde{C}_{S T} \\
\tilde{C}_{T S} & \tilde{C}_{T T}
\end{array}\right)\!
\left(\begin{array}{ll}
I_s & 0 \\
0 & 0
\end{array}\right)\!
\left(\begin{array}{ll}
\tilde{C}_{S S}^\top & \tilde{C}_{T S} ^\top \\
\tilde{C}_{S T}^\top & \tilde{C}_{T T}^\top
\end{array}\right)
\!=\!\left(\begin{array}{cc}
\tilde{C}_{S S} \tilde{C}_{S S}^\top & \tilde{C}_{S S} \tilde{C}_{T S}^\top \\
\tilde{C}_{T S}^{T} \tilde{C}_{S S}^\top & \tilde{C}_{T S} \tilde{C}_{T S}^\top
\end{array}\right),
\end{align*}
and therefore
$$
\tilde{C}\operatorname{Diag}(x) \tilde{C}^\top+\operatorname{Diag}(\mathbf{e}-x)=\left(\begin{array}{cc}
\tilde{C}_{S S} \tilde{C}_{S S}^\top & \tilde{C}_{S S} \tilde{C}_{T S}^\top \\
\tilde{C}_{T S}^{T} \tilde{C}_{S S}^\top & \tilde{C}_{T S} \tilde{C}_{T S}^\top+I_{n-s}
\end{array}\right).
$$
Applying the well-known Schur-complement determinant formula, we then obtain
$$
\begin{aligned}
&\operatorname{ldet}\left(\tilde{C} \operatorname{Diag}(x) \tilde{C}^\top+\operatorname{Diag}(\mathbf{e}-x)\right) \\
&~=2 \operatorname{ldet} \tilde{C}_{S S}+\operatorname{ldet}\left(\tilde{C}_{T S}^\top \tilde{C}_{T S}+I_{n-s}-\tilde{C}_{T S}^\top \tilde{C}_{S S}^\top \tilde{C}_{S S}^{-\top} \tilde{C}_{SS}^{-1} \tilde{C}_{S S} \tilde{C}_{T S}^\top\right) \\
&~=2 \operatorname{ldet} \tilde{C}_{S S}\,.
\end{aligned}
$$
Let $\Upsilon_{S(x^*)}$ be the sub-vector of $\Upsilon$ indexed by $S(x^*)$. Then, we have
\begin{align*}
    &f_{{\tiny\mbox{linx}}}(x^*;\Upsilon)=\textstyle\frac{1}{2} \operatorname{ldet}\left(\tilde{C} \operatorname{Diag}(x) \tilde{C}^\top+\operatorname{Diag}(\mathbf{e}-x)\right)-\sum_{i\in N} x^*_i\log \gamma_i\\
    &~=\operatorname{ldet} \tilde{C}_{S S}-\textstyle\sum_{i\in S(x^*)} \log \gamma_i\\
    &~= \operatorname{ldet} \left(\strut\Diag\left(\Upsilon_{S(x^*)}\right)C\left(S(x^*), S(x^*)\right)\right)
    -\textstyle\sum_{i\in S(x^*)} \log \gamma_i\\
    &~=\operatorname{ldet} C\left(S(x^*), S(x^*)\right).
\end{align*}

    \item[\ref{thm:linx}.ii:] The concavity is essentially a  result of \cite{Kurt_linx}, with details filled in by \cite[Section 3.3.1]{FLbook}. The continuous differentiability comes from the analyticity of $f_{{\tiny\mbox{linx}}}(x;\Upsilon)$ in $(x,\Upsilon) \in \dom\left(f_{{\tiny\mbox{linx}}};\Upsilon\right)\times \mathbb{R}^n_{++}$\,.
    \item[\ref{thm:linx}.iii:] We sketch the proof first:
    \begin{enumerate}
        \item for fixed $x\in \dom\left(f_{{\tiny\mbox{linx}}};\Upsilon\right)$, we derive the Hessian of $f_{{\tiny\mbox{linx}}}\left(x;\Upsilon\right)$ with respect to $\log \Upsilon$ and show that it is positive-semidefinite, which implies the convexity of $f_{{\tiny\mbox{linx}}}\left(x;\Upsilon\right)$ in $\log \Upsilon$;
        \item
     The convexity of $z_{{\tiny\mbox{linx}}}(\Upsilon)$  in $\log \Upsilon$ then follows because $z_{{\tiny\mbox{linx}}}(\Upsilon)$ is the point-wise maximum of $f_{{\tiny\mbox{linx}}}\left(x;\Upsilon\right)$ over feasible $x$ for \ref{linx} in domain $\dom\left(f_{{\tiny\mbox{linx}}};\Upsilon\right)$.
    \end{enumerate}

    \smallskip
    The detailed proof is as follows: for convenience, let
    \begin{align*}
    &F_{{\tiny\mbox{linx}}}(x;\Upsilon):=  \Diag(\Upsilon) C\Diag(x)C \Diag(\Upsilon)+ \Diag(\mathbf{e}-x), \mbox{ and }\\
    &A_{{\tiny\mbox{linx}}}(x;\Upsilon):=\Diag(\Upsilon) C\Diag(x)C \Diag(\Upsilon).
    \end{align*}

    In the following derivation, we will fix $x$ and regard $\Upsilon$ as a variable. Thus, for simplicity, we will write $F_{{\tiny\mbox{linx}}}(x;\Upsilon)$ and  $A_{{\tiny\mbox{linx}}}(x;\Upsilon)$ as $F_{{\tiny\mbox{linx}}}(\Upsilon)$ and $A_{{\tiny\mbox{linx}}}(\Upsilon)$, respectively.
     Let  $\check{x}:=x-\mathbf{e}$, and we note that
    % $A_{{\tiny\mbox{BQP}}}(\Upsilon)F_{{\tiny\mbox{BQP}}}(\Upsilon)^{-1} = I +\Diag(\check{x}) F_{{\tiny\mbox{BQP}}}(\Upsilon)^{-1}$ and
     \begin{align}
     &F_{{\tiny\mbox{linx}}}(\Upsilon)^{-1}A_{{\tiny\mbox{linx}}}(\Upsilon) = I + F_{{\tiny\mbox{linx}}}(\Upsilon)^{-1}\Diag(\check{x}),\label{FAlinx}\\
      &A_{{\tiny\mbox{linx}}}(\Upsilon)F_{{\tiny\mbox{linx}}}(\Upsilon)^{-1} = I + \Diag(\check{x})F_{{\tiny\mbox{linx}}}(\Upsilon)^{-1}.\label{AFlinx}
    \end{align}

    \medskip

    Given $x\in \dom\left(f_{{\tiny\mbox{linx}}};\Upsilon\right)$, we first derive the gradient of $f_{{\tiny\mbox{linx}}}\left(x;\Upsilon\right)$ with respect to $\Upsilon$. We have
    \begin{align*}
         &\textstyle\frac{\partial f_{{\tiny\mbox{linx}}}\left(x;\Upsilon\right)}{\partial \gamma_i}=\frac{1}{2} F_{{\tiny\mbox{linx}}}(\Upsilon)^{-1}\bullet \frac{\partial A_{{\tiny\mbox{linx}}}(\Upsilon)}{\partial \gamma_i} -\frac{x_i}{\gamma_i}\\
         & ~=\textstyle\frac{1}{2}  F_{{\tiny\mbox{linx}}}(\Upsilon)^{-1}\bullet \frac{1}{\gamma_i}\left(E_{ii}A_{{\tiny\mbox{linx}}}(\Upsilon)+A_{{\tiny\mbox{linx}}}(\Upsilon) E_{ii}\right)-\frac{x_i}{\gamma_i}\\
        % & ~ =\textstyle\frac{1}{2}  F_{{\tiny\mbox{linx}}}(\Upsilon)^{-1}\bullet \frac{1}{\gamma_i}\left(\mathbf{e}_i \mathbf{e}_i^\top A_{{\tiny\mbox{linx}}}(\Upsilon)+A_{{\tiny\mbox{linx}}}(\Upsilon) \mathbf{e}_i \mathbf{e}_i^\top\right)-\frac{x_i}{\gamma_i}\\
    &~ =\textstyle\frac{1}{2\gamma_i}\left(\left(A_{{\tiny\mbox{linx}}}(\Upsilon)_{i\cdot}F_{{\tiny\mbox{linx}}}(\Upsilon)^{-1}_{\cdot i}+F_{{\tiny\mbox{linx}}}(\Upsilon)^{-1}_{i\cdot}A_{{\tiny\mbox{linx}}}(\Upsilon)_{\cdot i}\right)-2x_i\right)\\
    & ~=\textstyle\frac{1}{\gamma_i}\left(F_{{\tiny\mbox{linx}}}(\Upsilon)^{-1}_{i\cdot}A_{{\tiny\mbox{linx}}}(\Upsilon)_{\cdot i}-x_i\right),
    \end{align*}
    where the last identity follows from the symmetry of  $F_{{\tiny\mbox{linx}}}(\Upsilon)$ and $A_{{\tiny\mbox{linx}}}(\Upsilon)$. Then, applying \eqref{FAlinx}, we obtain
    \begin{align*}
        &\textstyle\frac{\partial f_{{\tiny\mbox{linx}}}\left(x;\Upsilon\right)}{\partial \Upsilon}
        % = \Diag(\Upsilon)^{-1}\left(\diag\left(F_{{\tiny\mbox{linx}}}(\Upsilon)^{-1}A_{{\tiny\mbox{linx}}}(\Upsilon)\right)-x\right)\\
        %& ~=\Diag(\Upsilon)^{-1}\left(\diag\left(F_{{\tiny\mbox{linx}}}(\Upsilon)^{-1}\left(F_{{\tiny\mbox{linx}}}(\Upsilon)+\Diag(\check{x}) \right)\right)-x\right)\\
    %&~
=\Diag(\Upsilon)^{-1}\left(\diag\left(F_{{\tiny\mbox{linx}}}(\Upsilon)^{-1}\Diag(\check{x})\right)-\check{x}\right).
    \end{align*}
   Next, we derive the Hessian of $f_{{\tiny\mbox{linx}}}\left(x;\Upsilon\right)$ with respect to $\Upsilon$. We have
    \begin{align*}
    &\textstyle\frac{\partial^2 f_{{\tiny\mbox{linx}}}\left(x;\Upsilon\right)}{\partial \Upsilon \partial \gamma_i}=\textstyle\frac{\partial}{\partial\gamma_i}\left(\frac{\partial f_{{\tiny\mbox{linx}}}\left(x;\Upsilon\right)}{\partial \Upsilon}\right)\\
    & ~= \textstyle\frac{\partial \Diag(\Upsilon)^{-1}}{\partial\gamma_i} \left(\diag\left(F_{{\tiny\mbox{linx}}}(\Upsilon)^{-1}\Diag(\check{x})\right)-\check{x}\right)\\
    &~~\quad +\Diag(\Upsilon)^{-1}  \textstyle\frac{\partial \left(\diag\left(F_{{\tiny\mbox{linx}}}(\Upsilon)^{-1}\Diag(\check{x})\right)-\check{x} \right)}{\partial\gamma_i} \\
    & ~ =\textstyle\frac{\partial \Diag(\Upsilon)^{-1}}{\partial\gamma_i}\left(\diag\left(F_{{\tiny\mbox{linx}}}(\Upsilon)^{-1}\Diag(\check{x})\right)-\check{x}\right)\\
    &~~\quad -\Diag(\Upsilon)^{-1}  \diag\left(F_{{\tiny\mbox{linx}}}(\Upsilon)^{-1} \textstyle\frac{\partial F_{{\tiny\mbox{linx}}}(\Upsilon)}{\partial\gamma_i}F_{{\tiny\mbox{linx}}}(\Upsilon)^{-1}\Diag(\check{x})\right) \\
    &~= -\textstyle\frac{1}{\gamma_i^2}E_{ii}\left(\diag\left(F_{{\tiny\mbox{linx}}}(\Upsilon)^{-1}\Diag(\check{x})\right)-\check{x}\right)\\
    &~~\quad -\Diag(\Upsilon)^{-1}\diag\left(F_{{\tiny\mbox{linx}}}(\Upsilon)^{-1}\textstyle\frac{\left(E_{ii}A_{{\tiny\mbox{linx}}}(\Upsilon)+A_{{\tiny\mbox{linx}}}(\Upsilon) E_{ii}\right)}{\gamma_i}
    % \right.\\
    % &\qquad\left.\vphantom{\frac{\left(E_{ii}A+A E_{ii}\right)}{\gamma_i}}
    F_{{\tiny\mbox{linx}}}(\Upsilon)^{-1}\Diag(\check{x})\right).
    % & ~=-\textstyle\frac{1}{\gamma_i^2}E_{ii}\left(\diag\left(F_{{\tiny\mbox{linx}}}(\Upsilon)^{-1}\Diag(\check{x})\right)-\check{x}\right)\\
    % &\quad -\textstyle\frac{1}{\gamma_i}\Diag(\Upsilon)^{-1}\diag\left(F_{{\tiny\mbox{linx}}}(\Upsilon)^{-1}\left(E_{ii}A_{{\tiny\mbox{linx}}}(\Upsilon)+A_{{\tiny\mbox{linx}}}(\Upsilon) E_{ii}\right)
    % % \right.\\
    % % &\qquad\left.\vphantom{\left(E_{ii}A+A E_{ii}\right)}
    % F_{{\tiny\mbox{linx}}}(\Upsilon)^{-1}\Diag(\check{x})\right).
\end{align*}
 For the first term, we can reformulate
 \begin{align*}
    & -\textstyle\frac{1}{\gamma_i^2}E_{ii}\left(\diag\left(F_{{\tiny\mbox{linx}}}(\Upsilon)^{-1}\Diag(\check{x})\right)-\check{x}\right)\\
     & ~=-  \Diag\left(\Upsilon\right)^{-1} \Diag\left(\diag\left(F_{{\tiny\mbox{linx}}}(\Upsilon)^{-1}\Diag(\check{x})\right)-\check{x}\right) \Diag\left(\Upsilon\right)^{-1} \mathbf{e}_i\,,
 \end{align*}
 while for the second term, we can reformulate
 \begin{align*}
     &\diag\left(F_{{\tiny\mbox{linx}}}(\Upsilon)^{-1}\left(E_{ii}A_{{\tiny\mbox{linx}}}(\Upsilon)+A_{{\tiny\mbox{linx}}}(\Upsilon) E_{ii}\right) F_{{\tiny\mbox{linx}}}(\Upsilon)^{-1}\Diag(\check{x})\right)\\
     % &~=\diag\left(F_{{\tiny\mbox{linx}}}(\Upsilon)^{-1}E_{ii}A_{{\tiny\mbox{linx}}}(\Upsilon) F_{{\tiny\mbox{linx}}}(\Upsilon)^{-1}\Diag(\check{x})\right)\\
     % & \quad +\diag\left(F_{{\tiny\mbox{linx}}}(\Upsilon)^{-1}A_{{\tiny\mbox{linx}}}(\Upsilon)E_{ii} F_{{\tiny\mbox{linx}}}(\Upsilon)^{-1}\Diag(\check{x})\right)\\
     &~= \diag\left(F_{{\tiny\mbox{linx}}}(\Upsilon)^{-1}E_{ii}\left(I + \Diag(\check{x}) F_{{\tiny\mbox{linx}}}(\Upsilon)^{-1}\right)\Diag(\check{x})\right) \\
     &~~ \quad + \diag\left(\left(I+ F_{{\tiny\mbox{linx}}}(\Upsilon)^{-1}\Diag(\check{x})\right)E_{ii} F_{{\tiny\mbox{linx}}}(\Upsilon)^{-1}\Diag(\check{x})\right)\\
     &~= \diag\left(F_{{\tiny\mbox{linx}}}(\Upsilon)^{-1}E_{ii}\Diag(\check{x})\right)+ \diag\left( E_{ii}F_{{\tiny\mbox{linx}}}(\Upsilon)^{-1}\Diag(\check{x})\right)\\
     & ~~\quad + \diag\left(F_{{\tiny\mbox{linx}}}(\Upsilon)^{-1}E_{ii} \Diag(\check{x}) F_{{\tiny\mbox{linx}}}(\Upsilon)^{-1}\Diag(\check{x})\right)\\
     &~~\quad + \diag\left( F_{{\tiny\mbox{linx}}}(\Upsilon)^{-1}\Diag(\check{x})E_{ii} F_{{\tiny\mbox{linx}}}(\Upsilon)^{-1}\Diag(\check{x})\right)\\
     &~= 2\left(x_i-1\right)\left( \left(F_{{\tiny\mbox{linx}}}(\Upsilon)^{-1}\right)_{ii} \mathbf{e}_i\right)\\
     &~~\quad + 2\left(x_i-1\right)\left(\diag\left(F_{{\tiny\mbox{linx}}}(\Upsilon)^{-1}E_{ii} F_{{\tiny\mbox{linx}}}(\Upsilon)^{-1}\Diag(\check{x})\right)\right)\\
     % &~= 2 \Diag\left(\check{x}\right) \Diag\left(\diag\left(F_{{\tiny\mbox{linx}}}(\Upsilon)^{-1}\right)\right) \mathbf{e}_i\\
     % & \quad + 2 \Diag\left(\check{x}\right) \diag\left(  \left(F_{{\tiny\mbox{linx}}}(\Upsilon)^{-1}\circ F_{{\tiny\mbox{linx}}}(\Upsilon)^{-1}\right)_{\cdot i}\right) (x_i-1)\\
     &~= 2 \Diag\left(\check{x}\right) \Diag\left(\diag\left(F_{{\tiny\mbox{linx}}}(\Upsilon)^{-1}\right)\right) \mathbf{e}_i\\
     &~~ \quad + 2 \Diag\left(\check{x}\right)   \left(F_{{\tiny\mbox{linx}}}(\Upsilon)^{-1}\circ F_{{\tiny\mbox{linx}}}(\Upsilon)^{-1}\right) \Diag(\check{x}) \mathbf{e}_i\,,
 \end{align*}
    which implies that
    \begin{align*}
        &\textstyle\frac{1}{\gamma_i}\Diag(\Upsilon)^{-1}\diag\left(F_{{\tiny\mbox{linx}}}(\Upsilon)^{-1}\left(E_{ii}A_{{\tiny\mbox{linx}}}(\Upsilon)+A_{{\tiny\mbox{linx}}}(\Upsilon) E_{ii}\right)
     %   \right.\\
    %&\qquad\left.\vphantom{\left(E_{ii}A+A E_{ii}\right)}
    F_{{\tiny\mbox{linx}}}(\Upsilon)^{-1}\Diag(\check{x})\right)\\
    & ~=2 \Diag(\Upsilon)^{-1} \Diag\left(\check{x}\right) \Diag\left(\diag\left(F_{{\tiny\mbox{linx}}}(\Upsilon)^{-1}\right)\right) \Diag(\Upsilon)^{-1}\mathbf{e}_i \\
    & ~~\quad  +2 \Diag(\Upsilon)^{-1} \Diag\left(\check{x}\right)   \left(F_{{\tiny\mbox{linx}}}(\Upsilon)^{-1}\circ F_{{\tiny\mbox{linx}}}(\Upsilon)^{-1}\right) \Diag(\check{x})\Diag(\Upsilon)^{-1}\mathbf{e}_i\,.
    \end{align*}
    Finally, we obtain
    \begin{align*}
        &\textstyle\frac{\partial^2 f_{{\tiny\mbox{linx}}}\left(x;\Upsilon\right)}{\partial \Upsilon ^2 }
         =-  \Diag\left(\Upsilon\right)^{-1} \Diag\left(\diag\left(F_{{\tiny\mbox{linx}}}(\Upsilon)^{-1}\Diag(\check{x})\right)-\check{x}\right) \Diag\left(\Upsilon\right)^{-1}   \\
        & \quad - 2 \Diag(\Upsilon)^{-1} \Diag\left(\check{x}\right) \Diag\left(\diag\left(F_{{\tiny\mbox{linx}}}(\Upsilon)^{-1}\right)\right) \Diag(\Upsilon)^{-1} \\
        &~~~~\quad -2 \Diag(\Upsilon)^{-1} \Diag\left(\check{x}\right)   \left(F_{{\tiny\mbox{linx}}}(\Upsilon)^{-1}\circ F_{{\tiny\mbox{linx}}}(\Upsilon)^{-1}\right)  \Diag(\check{x}) \Diag(\Upsilon)^{-1}.
    \end{align*}
   Then, we have
    \begin{align*}
        &\textstyle\frac{\partial^2 f_{{\tiny\mbox{linx}}}\left(x;\Upsilon\right)}{\partial \left(\log \Upsilon\right) ^2} = \Diag(\Upsilon) \textstyle\frac{\partial f_{{\tiny\mbox{linx}}}\left(x;\Upsilon\right)}{\partial \Upsilon } + \Diag(\Upsilon)\frac{\partial^2 f_{{\tiny\mbox{linx}}}\left(x;\Upsilon\right)}{\partial \Upsilon ^2 } \Diag(\Upsilon)\\
        & ~= -2 \Diag\left(\check{x}\right) \Diag\left(\diag\left(F_{{\tiny\mbox{linx}}}(\Upsilon)^{-1}\right)\right) \\
        &~~~~\quad ~-2  \Diag\left(\check{x}\right)   \left(F_{{\tiny\mbox{linx}}}(\Upsilon)^{-1}\circ F_{{\tiny\mbox{linx}}}(\Upsilon)^{-1}\right) \Diag(\check{x})\\
        &~ =2 \Diag\left(\mathbf{e}-x\right) \Diag\left(\diag\left(F_{{\tiny\mbox{linx}}}(\Upsilon)^{-1}\right)\right) \\
        &~~~~\quad ~ - 2  \Diag\left(\mathbf{e}-x\right)   \left(F_{{\tiny\mbox{linx}}}(\Upsilon)^{-1}\circ F_{{\tiny\mbox{linx}}}(\Upsilon)^{-1}\right) \Diag(\mathbf{e}-x).
    \end{align*}
\smallskip
    Next, we are going to show the positive semidefiniteness of $\frac{\partial^2 f_{{\tiny\mbox{linx}}}\left(x;\Upsilon\right)}{\partial \left(\log \Upsilon\right) ^2} $ for all $0\le x \le \mathbf{e}$ such that $x\in \dom\left(f_{{\tiny\mbox{linx}}};\Upsilon\right)$. Note that we will not require $x$ to be feasible to \ref{linx}. We divide the discussion into two cases.

    \smallskip
    \underline{Case $1$}: when $0\le x<\mathbf{e}$,  let $D_{{\tiny\mbox{linx}}}(x):=\left(\Diag(\mathbf{e}-x)\right)^{1/2} \succ 0$, and  $H_{{\tiny\mbox{linx}}}(x;\Upsilon)$ $:=\left(D_{{\tiny\mbox{linx}}}(x)\right)^{-1} A_{{\tiny\mbox{linx}}}(\Upsilon) \left(D_{{\tiny\mbox{linx}}}(x)\right)^{-1}\succeq 0$. Again for simplicity, we write $D_{{\tiny\mbox{linx}}}(x)$ and $H_{{\tiny\mbox{linx}}}(x;\Upsilon)$ as $D_{{\tiny\mbox{linx}}}$ and $H_{{\tiny\mbox{linx}}}(\Upsilon)$. First, we note
    \begin{align*}
        D_{{\tiny\mbox{linx}}}  F_{{\tiny\mbox{linx}}}(\Upsilon)^{-1}D_{{\tiny\mbox{linx}}} =\left( D_{{\tiny\mbox{linx}}}^{-1} A_{{\tiny\mbox{linx}}}(\Upsilon) D_{{\tiny\mbox{linx}}}^{-1} + I\right)^{-1}.
    \end{align*}
     Then, we have
    \begin{align*}
        & \textstyle\frac{1}{2} \frac{\partial^2 f_{{\tiny\mbox{linx}}}\left(x;\Upsilon\right)}{\partial \left(\log \Upsilon\right) ^2} \\
        &~= \Diag\left(\diag\left( D_{{\tiny\mbox{linx}}}  F_{{\tiny\mbox{linx}}}(\Upsilon)^{-1}D_{{\tiny\mbox{linx}}}\right)\right) \\
        &~~ \quad -\Diag\left( D_{{\tiny\mbox{linx}}}  F_{{\tiny\mbox{linx}}}(\Upsilon)^{-1}D_{{\tiny\mbox{linx}}}\right) \circ \Diag\left( D_{{\tiny\mbox{linx}}}  F_{{\tiny\mbox{linx}}}(\Upsilon)^{-1}D_{{\tiny\mbox{linx}}}\right)\\
        & ~=\left( D_{{\tiny\mbox{linx}}}  F_{{\tiny\mbox{linx}}}(\Upsilon)^{-1}D_{{\tiny\mbox{linx}}}\right)\circ I \\
        &~~ \quad -\Diag\left( D_{{\tiny\mbox{linx}}}  F_{{\tiny\mbox{linx}}}(\Upsilon)^{-1}D_{{\tiny\mbox{linx}}}\right) \circ \Diag\left( D_{{\tiny\mbox{linx}}}  F_{{\tiny\mbox{linx}}}(\Upsilon)^{-1}D_{{\tiny\mbox{linx}}}\right)\\
        & ~=\left(H_{{\tiny\mbox{linx}}}(\Upsilon)+I \right)^{-1}\circ I - \left(H_{{\tiny\mbox{linx}}}(\Upsilon)+I \right)^{-1} \circ \left(H_{{\tiny\mbox{linx}}}(\Upsilon)+I \right)^{-1}\\
        & ~=\left(H_{{\tiny\mbox{linx}}}(\Upsilon)+I \right)^{-1} \circ \left(I - \left(H_{{\tiny\mbox{linx}}}(\Upsilon)+I \right)^{-1} \right)
        \succeq  ~ 0.
    \end{align*}
    The last inequality holds because $H_{{\tiny\mbox{BQP}}}(\Upsilon)+I \! \succ\! 0$ and the Schur Product Theorem.

    \smallskip
    \underline{Case $2$}: We now discuss general $0\le x \le \mathbf{e}$. Note that given $\Upsilon \in \mathbb{R}^n_{++}$\,, $\frac{\partial f^2_{{\tiny\mbox{linx}}}\left(x;\Upsilon\right)}{\partial \left(\log \Upsilon\right)^2}$ is analytical in $0\le x \le \mathbf{e}$ such that $x \in \dom\left(f_{{\tiny\mbox{linx}}};\Upsilon\right)$. Therefore, given $0\le x \le \mathbf{e}$, assume that $\frac{\partial f^2_{{\tiny\mbox{linx}}}\left(x;\Upsilon\right)}{\partial \left(\log \Upsilon\right)^2}\not\succeq 0$. Then by the analyticity (continuity) of $\frac{\partial f^2_{{\tiny\mbox{linx}}}\left(x;\Upsilon\right)}{\partial \left(\log \Upsilon\right)^2}$, there exists small enough $\epsilon>0$ such that for any $0\le x' \le \mathbf{e}$ in the intersection of neighbourhood $\mathcal{N}_\epsilon(x):= \left\{x': \|x-x'\|_{\infty}\le \epsilon\right\}$ (where $\|\cdot \|_{\infty}$ is the vector infinity norm) and $\{x': 0\le x' \le \mathbf{e}, x'\in \dom\left(f_{{\tiny\mbox{linx}}};\Upsilon\right)\}$, we have $\frac{\partial f^2_{{\tiny\mbox{linx}}}\left(x;\Upsilon\right)}{\partial \left(\log \Upsilon\right)^2}\not \succeq 0$. On the other hand, this intersection contains some $x'$ such that $0\le x'<\mathbf{e}$, e.g. $x'=x-\sum_{i:x_i=1} \epsilon e_i$. This contradicts Case $1$.
    \smallskip

   In conclusion, for each fixed $x\in \{(x,X): 0\le x \le \mathbf{e}, x \in \dom\left(f_{{\tiny\mbox{linx}}};\Upsilon\right)\}$, $f_{{\tiny\mbox{linx}}}\left(x;\Upsilon\right)$ is convex in $\log \Upsilon$. In particular, for $x\in \dom\left(f_{{\tiny\mbox{linx}}};\Upsilon\right)$ and feasible to \ref{linx}, $f_{{\tiny\mbox{linx}}}\left(x;\Upsilon\right)$ is convex in $\log \Upsilon$.
    Finally, as $z_{{\tiny\mbox{linx}}}(\Upsilon)$ is the point-wise maximum of $f_{{\tiny\mbox{linx}}}\left(x;\Upsilon\right)$ over all such $x$, we have that  $z_{{\tiny\mbox{linx}}}(\Upsilon)$ is convex in $\log \Upsilon$. $\hfill \square$

\end{itemize}
\end{proof}

\section{Factorization bound}\label{sec:fact}

The factorization bound was first analyzed in \cite{nikolov2015randomized},
and then developed further in \cite{li2020best} and in \cite{chen2023computing} (see \cite[Section 3.4]{FLbook} for more details). The definition of the factorization bound is based on the following key lemma.

\begin{lem}\label{Nik}(see \cite[Lemma 14]{nikolov2015randomized})
 Let $\lambda\in\mathbb{R}^k_+$ with $\lambda_1\geq \lambda_2\geq \cdots\geq \lambda_k$\,, and let $0<s\leq k$. There exists a unique integer $\iota$, with $0\leq \iota< s$, such that
 $
 \lambda_{\iota}>\frac{1}{s-\iota}\sum_{\ell=\iota+1}^k \lambda_{\ell}\geq \lambda_{\iota +1}
 $\,,
 with the convention $\lambda_0:=+\infty$.
\end{lem}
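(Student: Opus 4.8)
The plan is to reduce the two-sided inequality to a single sign change of a monotone sequence. For $j=0,1,\ldots,s$ define
\[
  t(j):=(s-j)\lambda_j-\sum_{\ell=j+1}^{k}\lambda_\ell ,
\]
with the convention $\lambda_0:=+\infty$, so that $t(0)=+\infty$ and $t(s)=-\sum_{\ell=s+1}^{k}\lambda_\ell\le 0$ (an empty sum, hence $0$, when $s=k$). First I would record the two elementary equivalences obtained by multiplying through by $s-\iota>0$: the condition $\lambda_\iota>\tfrac{1}{s-\iota}\sum_{\ell=\iota+1}^{k}\lambda_\ell$ is exactly $t(\iota)>0$, and, after peeling off the $\lambda_{\iota+1}$ term, $\tfrac{1}{s-\iota}\sum_{\ell=\iota+1}^{k}\lambda_\ell\ge\lambda_{\iota+1}$ is exactly $t(\iota+1)\le 0$. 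Thus the lemma is equivalent to the assertion that there is a unique $\iota\in\{0,1,\ldots,s-1\}$ with $t(\iota)>0\ge t(\iota+1)$.

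Next I would show that $t$ is non-increasing on $\{0,1,\ldots,s\}$. For $1\le j\le s-1$, a one-line computation gives $t(j)-t(j+1)=(s-j)\lambda_j-\lambda_{j+1}-(s-j-1)\lambda_{j+1}=(s-j)(\lambda_j-\lambda_{j+1})\ge 0$, using that $\lambda$ is sorted in non-increasing order and $s-j>0$; and $t(0)=+\infty\ge t(1)$ trivially. Hence $t(0)\ge t(1)\ge\cdots\ge t(s)$ is a non-increasing sequence running from the positive value $t(0)=+\infty$ to the nonpositive value $t(s)$.

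Existence and uniqueness then drop out. For existence, set $\iota:=\max\{j\in\{0,1,\ldots,s\}:t(j)>0\}$; this set contains $0$ and excludes $s$, so $0\le\iota\le s-1$, and then $t(\iota)>0$ while $t(\iota+1)\le 0$ by maximality, giving the required property. For uniqueness, any $\iota'\in\{0,\ldots,s-1\}$ with $t(\iota')>0\ge t(\iota'+1)$ lies in $\{j:t(j)>0\}$, so $\iota'\le\iota$; and $\iota'<\iota$ is impossible, since monotonicity would force $t(\iota'+1)\ge t(\iota)>0$, contradicting $t(\iota'+1)\le 0$. Hence $\iota'=\iota$.

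This argument is essentially routine; the only care needed is the bookkeeping at the two endpoints — reading $\lambda_0=+\infty$ so that $\iota=0$ is admissible precisely when the average of $\lambda_1,\ldots,\lambda_k$ already dominates $\lambda_1$, and allowing the sum defining $t(s)$ to be empty when $s=k$. I do not anticipate any genuine obstacle.
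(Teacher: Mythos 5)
Your proof is correct. Note that the paper does not supply its own proof of this lemma --- it is imported verbatim from Nikolov (Lemma 14 of that reference) --- so there is no in-paper argument to compare against; your reduction to the single sign change of the non-increasing sequence $t(j)=(s-j)\lambda_j-\sum_{\ell=j+1}^{k}\lambda_\ell$ is a clean, self-contained verification, and the two equivalences $t(\iota)>0$ and $t(\iota+1)\le 0$, the telescoping identity $t(j)-t(j+1)=(s-j)(\lambda_j-\lambda_{j+1})\ge 0$, and the endpoint bookkeeping ($t(0)=+\infty$, $t(s)\le 0$) all check out.
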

% \begin{remark}
%     Instead of restricting $\lambda \in \mathbb{R}_+^n$ as in \cite[Lemma 14]{nikolov2015randomized}, we allow $\lambda$ to be any vector in $\mathbb{R}^n$. The proof of \cite[Lemma 14]{nikolov2015randomized} does not require $\lambda \in \mathbb{R}_+^n$, so Lemma \ref{Nik} is still valid. As we will see later, this relaxation enables us to establish an open domain for the objective function of the factorization bound.
% \end{remark}

Now, suppose that  $\lambda\in\mathbb{R}^k_+$ with
$\lambda_1\geq\lambda_2\geq\cdots\geq\lambda_k\,.$ Given an integer $s$ with $0<s\leq k$,
let $\iota$ be the unique integer defined by Lemma \ref{Nik}. We define
$$
\textstyle\phi_s(\lambda):=\sum_{\ell=1}^{\iota} \log \lambda_\ell + (s - \iota)\log\left(\frac{1}{s-{\iota}} \sum_{\ell=\iota+1}^{k}
\lambda_\ell\right).
$$
%\end{equation*}
Next, for $X\in\mathbb{S}_{+}^k$\,, we define
$\Gamma_s(X):=  \phi_s(\lambda_1(X),\ldots,\lambda_k(X))$ where $\lambda_1(X)\ge \lambda_2(X)\ge \cdots\ge \lambda_k(X)$ are the eigenvalues of $X$.

Suppose that the rank of $C$ is $r\geq s$. Then we factorize $C=FF^\top$,
with $F\in \mathbb{R}^{n\times k}$, for some $k$ satisfying $r\le k \le n$.
% Remember that for the sake of simplicity, we exclude the data $C,s, A,b$ variables in the parameters when defining terms associated with each upper bound.
It has been established \cite[Theorem 2.2]{chen2023computing} that the value of the factorization bound is independent of the choice of $F$. Consequently, for the sake of simplicity, while certain terms may feature $F$ in their defining equations, it will not be included as a parameter for such terms.

Now, for
$\Upsilon \in \mathbb{R}_{++}^n$ and
$x\in [0,1]^n$, we define
\begin{align*}
    &F_{{\tiny\mbox{DDFact}}}(x;\Upsilon):
=\textstyle \sum_{i=1}^n \gamma_i x_i F_{i\cdot}^\top F_{i\cdot}~, \text{ and}\\
&\textstyle f_{{\tiny\mbox{DDFact}}}(x;\Upsilon):= \strut \Gamma_s(F_{{\tiny\mbox{DDFact}}}(x;\Upsilon)) -\sum_{i=1}^n x_i \log \gamma_i\,.
\end{align*}
\noindent Finally, we define the \emph{g-scaled factorization bound}
% \[
% \begin{array}{ll}
% 	z_{{\tiny\mbox{DDFact}}}(\Upsilon):= 	&\max \left\{  f_{{\tiny\mbox{DDFact}}}(x;\Upsilon)~:~\right. \\
%  &\qquad \left.\mathbf{e}^\top x=s,~
%  0\leq x\leq \mathbf{e}, ~ Ax\leq b\strut \right\}.
% \end{array}
% \tag{DDFact}\label{DDFact}
% \]
\begin{align}
	z_{{\tiny\mbox{DDFact}}}(\Upsilon):= 	\max \left\{  f_{{\tiny\mbox{DDFact}}}(x;\Upsilon ) \,:\,\mathbf{e}^\top x=s,~
 0\leq x\leq \mathbf{e}, ~ Ax\leq b\strut \right\}.
\tag{DDFact}\label{DDFact}
\end{align}
The reason for the nomenclature \ref{DDFact} is because it is obtained from the Lagrangian dual of the Lagrangian dual of a nonconvex continuous relaxation of  \ref{CMESP} (see \cite{chen2023computing}).
Note that \[
F_{{\tiny\mbox{DDFact}}}(x;\Upsilon)
= F^\top \Diag(\sqrt{\Upsilon}) \Diag(x) \Diag(\sqrt{\Upsilon}) F.
\]
So,  we can interpret \ref{DDFact} as applying the
unscaled \ref{DDFact} bound to the symmetrically-scaled  matrix $\Diag(\sqrt{\Upsilon}) F\Diag\big(\sqrt{\Upsilon}\big)F^\top \Diag(\sqrt{\Upsilon}) = C\Diag\big(\sqrt{\Upsilon}\big)$, and then
correcting by $- \sum_{i=1}^n  x_i\log \gamma_i$\,.

In what follows, the following notations will be employed:
\begin{align*}
    &\dom\left(\Gamma_s\right):=  \left\{X : X\succeq 0, \rank(X)\ge s \right\}, \text{ and }\\
    &\dom\left(f_{{\tiny\mbox{DDFact}}}; \Upsilon\right):=  \left\{x : F_{{\tiny\mbox{DDFact}}}(x;\Upsilon)\in \dom \left(\Gamma_s\right)\right\}
\end{align*}
being the domains of $\Gamma_s(X)$ and $f_{{\tiny\mbox{DDFact}}}(x;\Upsilon)$, respectively. Moreover, we denote
\begin{align*}
   \dom\left(f_{{\tiny\mbox{DDFact}}}; \Upsilon\right)_+:=  \left\{x : x\ge 0,  F_{{\tiny\mbox{DDFact}}}(x;\Upsilon)\in \dom \left(\Gamma_s\right)\right\}
\end{align*}
as the intersection of $\dom\left(f_{{\tiny\mbox{DDFact}}}; \Upsilon\right)$ and $\mathbb{R}^n_+$\,. Because the feasible solutions of \ref{DDFact} with finite objective values
% \mf{is it better to say here - Because all feasible solutions to \ref{DDFact}? Also, should we clarify here the relation between $\dom\left(f_{{\tiny\mbox{DDFact}}}; \Upsilon\right)_+$ and the feasible set of \ref{DDFact}? } \zc{agree, modified}
are evidently confined in $\dom\left(f_{{\tiny\mbox{DDFact}}}; \Upsilon\right)_+$\,, it is enough to concentrate on $\dom\left(f_{{\tiny\mbox{DDFact}}}; \Upsilon\right)_+$ instead of $\dom\left(f_{{\tiny\mbox{DDFact}}}; \Upsilon\right)$.%_+$.
We wish to highlight the following important point.

% This is applying the \ref{DDFact} bound to the right-hand-side of the identity \eqref{gscale:eqn} where $t_1= t_2=\frac{1}{2}$, i.e., we can interpret this bound as

\begin{remark}\label{rem:fullrank}
    Generally, we must choose a factorization with $k$ being at least the rank of $C$, but it is natural to choose one with $k$ equal to the rank of $C$;
    for example, via a spectral decomposition of $C$.
    In this case, $F_{{\tiny\mbox{DDFact}}}(x;\Upsilon)$ is full-rank if and only if $x\in\mathbb{R}^n_{++}$\,.
    In light of this, we can fully understand where on the boundary of the feasible region of \ref{DDFact}, we can encounter solutions not in
    % {\color{yellow}$\dom\left(f_{{\tiny\mbox{DDFact}}}; \Upsilon\right)$} \mf{i think should replace the yellow by: "
    the interior of $\dom\left(f_{{\tiny\mbox{DDFact}}}; \Upsilon\right)_+$\,.
    % " } .
\end{remark}

It is commonly assumed in the literature that the function $f_{{\tiny\mbox{DDFact}}}(x;\Upsilon)$ may exhibit non-smooth behavior in $x$, and toward this end, the supdifferential is characterized.  In their work, \cite{li2020best} utilized a Frank-Wolfe algorithm to evaluate \ref{DDFact} for the \hyperlink{MESP}{MESP}  case. Subsequently, \cite{chen2023computing} employed a BFGS-based algorithm of \texttt{Knitro} for \ref{DDFact},
to handle both \hyperlink{MESP}{MESP}  and \ref{CMESP}, wherein they utilized supgradient information to update the Hessian approximation.
This algorithm achieved superior performance in terms of both speed and accuracy, in the spirit of \cite{nonsmoothBFGS} which investigated the excellent performance of BFGS on non-smooth problems.
In the following section, we will establish that $f_{{\tiny\mbox{DDFact}}}(x;\Upsilon)$ is actually  \emph{in a certain generalized sense} ``differentiable'' in $x\in \dom\left(f_{{\tiny\mbox{DDFact}}}; \Upsilon\right)_+$\,.
% contradicting the non-smoothness assumption
% %falsifying the non-smoothness conjecture in
% of the literature.
These findings serve as a theoretical foundation for the efficiency of algorithms (e.g., those employed by  \cite{chen2023computing}) that rely on smoothness for their convergence.
We will introduce two necessary definitions to facilitate the establishment of our  generalized differentiability results.
% \begin{defn}\label{def:fact}
% For any $x \in \dom\left(f_{{\tiny\mbox{DDFact}}}; \Upsilon\right)_+$, suppose that the eigenvalues of $F_{{\tiny\mbox{DDFact}}}(x;\Upsilon)$ are $\lambda_1\ge \cdots\ge \lambda_r > \lambda_{r+1}=\cdots=\lambda_{k}
% =0$~, where $r\in [s,k]$ and $F_{{\tiny\mbox{DDFact}}}(x;\Upsilon) =  Q \Diag( \lambda ) Q $ with an orthonormal matrix $ Q $. Define $ \beta:=(\beta_1,\beta_2,\ldots,\beta_{k})^\top$ such that
% \begin{align*}
%   & \textstyle\beta_{i}:=\frac{1}{\lambda_{i}},~ \forall~ i \in[1, \iota],~\\
%   & \textstyle\beta_{i}:=\frac{s-\iota}{\sum_{i \in[\iota+1, k]} \lambda_{i}},~ \forall~ i \in[\iota+1, k],
% \end{align*}
% where $\iota$ is the unique integer defined in Lemma \ref{Nik}.
% \end{defn}

\begin{defn}\label{def:fact}
For $x \in \dom\left(f_{{\tiny\mbox{DDFact}}}; \Upsilon\right)_+\,$, let the eigenvalues of $F_{{\tiny\mbox{DDFact}}}(x;\Upsilon)$ be $\lambda_1\ge \cdots\ge \lambda_r> \lambda_{r+1}=\cdots= \lambda_{k}=0$, and $F_{{\tiny\mbox{DDFact}}}(x;\Upsilon) =  Q \Diag( \lambda ) Q $ with an orthonormal matrix $ Q $. Define $ \beta:=(\beta_1,\beta_2,\ldots,\beta_{k})^\top$ such that
\begin{align*}
  & \textstyle\beta_{i}:=\frac{1}{\lambda_{i}},~ \forall~ i \in[1, \iota],~\\
  & \textstyle\beta_{i}:=\frac{s-\iota}{\sum_{i \in[\iota+1, k]} \lambda_{i}},~ \forall~ i \in[\iota+1, k],
\end{align*}
where $\iota$ is the unique integer defined in Lemma \ref{Nik}.
\end{defn}

 In cases where an explicit analytic formula is unavailable for a function, such as the objective of \ref{DDFact}, the conventional definition of (Fr\'echet) differentiability only applies to points that exist within the interior of the function domain. This restriction presents challenges when attempting to analyze the properties of a function for points where the conventional definition of (Fr\'echet) differentiability is not defined, e.g., points at the boundary of the function domain, which
 is important for understanding the behavior of algorithms having iterates at such points. For our particular function, when we choose a factorization with $k$ equal to the rank of $C$, such points are precisely the ones with zero components (see Remark \ref{rem:fullrank}), and might well be visited by active-set methods.\footnote{In fact we will see in our computational results
 (Table \ref{table:factexperiments_boundary}) that they are frequently visited by active-set methods.}
 % \mf{i think this last comment is specific to our case} \zc{What you means by saying specific to our case?}
 % % further restricts our ability to design an efficient optimization algorithm.
 % For instance, it is impossible to ensure the convergence of an algorithm that has iterates to the boundary of the function domain, while it may be the fastest.
 To overcome this difficulty, we will extend the definition of (Fr\'echet) differentiability, in a natural way, to include points at the boundary of a (convex) set.

\begin{defn}\label{def:generaldiff}
We define a function $f:\mathbb{R}^n\rightarrow \mathbb{R}$ to be generalized differentiable with respect to a set $\mathcal A\subseteq \dom(f)$ if a linear operator $g(x):\mathbb{R}^n\rightarrow \mathbb{R}$ exists for all $x\in \mathcal{A}$, such that for all $d$ with $x+d\in \mathcal{A}$, we have $f(x+d)-f(x)-g(x)^\top d=o\left(\|d\|\right)$. We refer to $g(x)$ as the generalized gradient with respect to $\mathcal{A}$. We will omit ``with respect to $\mathcal{A}$" when it is clear from the context.
\end{defn}

% \jon{It seems like we should make some assumption about  $\mathcal{A}$
% being convex} \zc{I agree with that for an algorithm converging to optimality, convexity is required. But just for this definition, it is of problem without convexity. We can definitely change it to requiring convexity if you think it is better.
% }

\begin{remark}\label{rmk:generaldiff}
    We would like to highlight that our concept of generalized differentiability is almost as potent as differentiability on $\mathcal{A}$. Specifically, it possesses identical capabilities as differentiability if we use
    feasible-point optimization algorithms.
    % that always confines its iterates within $\mathcal{A}$, e.g., projected gradient algorithms, interior-point algorithms, and active-set algorithms.
    The reasons are as follows:
    \begin{enumerate}
        \item if $x$ lies in the interior of $\mathcal{A}$, then the generalized differentiability and generalized gradient are exactly differentiability and gradient, respectively;
        \item if $x$ lies on the boundary of $\mathcal{A}$, then the Whitney Extension Theorem (\cite[Theorem 1]{Whitney}) guarantees the existence of a compact neighborhood $\mathcal{N}_c(x) \subset \mathcal{A}$ such that the restriction of $f$ on $\mathcal{N}_c(x)$ has a continuously differentiable extension $\hat f$ on $\mathbb{R}^n$, with prescribed derivative information on $\mathcal{N}_c(x)$. In other words, $\hat f(x)=f(x), \frac{\partial \hat f(x)}{\partial x}= g(x)$ for all $x\in \mathcal{N}_c(x)$. Consequently, the generalized differentiability of $f$ is equivalent to its differentiability at the boundary point $x$, as long as we examine a larger open set that contains a local neighborhood of the boundary point;
        \item The equation $f(x+d)-f(x)-g(x)^\top d=o\left(\|d\|\right)$ implies that as $d$ approaches the zero vector, the expression $f(x+d)-f(x)-g(x)^\top d$ approaches zero, regardless of the \emph{path} taken by $d$. This statement is essentially the definition of differentiability, except that $x+d \in \mathcal{A}$. Consequently, if an optimization algorithm that always confines its iterates within $\mathcal{A}$ is utilized, the capabilities of generalized differentiability are identical to those of differentiability. Specifically, if this optimization algorithm converges under differentiability, it should also converge under generalized differentiability.
    \end{enumerate}
\end{remark}

In the subsequent analysis, we aim to establish the continuous generalized differentiability of the objective of \ref{DDFact} concerning its dependence on $\dom\left(f_{{\tiny\mbox{DDFact}}}; \Upsilon\right)_+$\,.
% By doing so, we can refute the conjecture of non-smoothness posited in the relevant literature and provide convergence guarantees for algorithms that have iterates positioned at the boundary of $\dom\left(f{{\tiny\mbox{DDFact}}}; \Upsilon\right)_+$\,.
This will give some theoretical understanding of
the good performance of algorithms that empirically have many iterates at the boundary of the feasible region, where
smoothness was in question.
Such algorithms were observed to outperform interior-point algorithms, which will be shown in the experiments.

\begin{thm}\label{thm:fact}
For all $\Upsilon \in \mathbb{R}_{++}^n$ in \ref{DDFact}, the following hold:
\begin{itemize}
\item[\ref{thm:fact}.i.] $z_{{\tiny\mbox{DDFact}}} (\Upsilon)$ yields a valid upper bound for the optimal value of \ref{CMESP}, i.e., $z(C,s,A,b)\leq z_{{\tiny\mbox{DDFact}}} (\Upsilon)$;
\item[\ref{thm:fact}.ii.] the function $f_{{\tiny\mbox{DDFact}}}(x;\Upsilon)$  is concave in $x$ on $\dom\left(f_{{\tiny\mbox{DDFact}}}; \Upsilon\right)_+$;
\item[\ref{thm:fact}.iii.] the function $f_{{\tiny\mbox{DDFact}}}(x;\Upsilon)$ is generalized differentiable with respect to \break $\dom\left(f_{{\tiny\mbox{DDFact}}}; \Upsilon\right)_+$\,, with generalized gradient
\begin{align*}
g_x(x;\Upsilon):= \Upsilon\circ \diag\left(F Q \Diag\left( \beta\right) Q ^\top F^\top\right) - \log\Upsilon,
\end{align*}
where $C=FF^\top$ is a factorization of $C$ and $Q, \beta$ are defined in Definition \ref{def:fact}. In particular, $g_x(x;\Upsilon)$ is invariant to different choices of $F, Q$ as long as we change $\beta$ accordingly;

\item[\ref{thm:fact}.iv.] given $x\in \dom\left(f_{{\tiny\mbox{DDFact}}}; \Upsilon\right)_+$\,, the function $f_{{\tiny\mbox{DDFact}}}(x;\Upsilon)$ is differentiable in $\Upsilon$ with gradient
\begin{align*}
    g_\Upsilon(x;\Upsilon):=  x\circ\diag\left(F Q \Diag\left( \beta\right) Q ^\top F^\top\right) - \Diag(\Upsilon)^{-1} x,
\end{align*}
where $C=FF^\top$ is a factorization of $C$ and $Q, \beta$ are defined in Definition \ref{def:fact}. In particular, $g_\Upsilon(x;\Upsilon)$ is invariant to different choices of $F, Q$, as long as we change $\beta$ accordingly. Additionally, for \hyperlink{MESP}{MESP}, let $x^*$ be an optimal solution to \ref{DDFact}; then we have
\begin{align*}
    \left.g_\Upsilon(x^*;\Upsilon)\right|_{\Upsilon=\mathbf{e}} = 0
\end{align*}
(which does \emph{not} generally hold for \ref{CMESP}, as we will see in \S\ref{sec:num}).
\item[\ref{thm:fact}.v.]
the function $f_{{\tiny\mbox{DDFact}}}(x;\Upsilon)$ is continuously generalized differentiable in $x$ and continuously differentiable in $\Upsilon$ on $\dom\left(f_{{\tiny\mbox{DDFact}}}; \Upsilon\right)_+$ $\times \mathbb{R}^n_{++}$\,, i.e., $g_x(x;\Upsilon)$ and $g_\Upsilon(x;\Upsilon)$ are continuous on $\dom\left(f_{{\tiny\mbox{DDFact}}}; \Upsilon\right)_+ \times \mathbb{R}^n_{++}$\,.
\end{itemize}

\end{thm}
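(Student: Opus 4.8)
The plan is to dispose of \ref{thm:fact}.i--ii quickly by reduction to known facts, to do the real work in \ref{thm:fact}.iii, and then to obtain \ref{thm:fact}.iv and \ref{thm:fact}.v as consequences. For \ref{thm:fact}.i I would argue exactly as for Theorem~\ref{thm:bqp}.i and Theorem~\ref{thm:linx}.i: take an integral optimal $x^*$ of \ref{CMESP} with support $S$, where $|S|=s$ and $C[S,S]\succ 0$ (otherwise $x^*$ is not optimal, since $r\ge s$); then $x^*$ is feasible for \ref{DDFact} and, writing $F_S$ for the rows of $F$ indexed by $S$, $F_{{\tiny\mbox{DDFact}}}(x^*;\Upsilon)=F_S^\top\Diag(\Upsilon_S)F_S$ is PSD of rank exactly $s$, with nonzero eigenvalues those of $\Diag(\Upsilon_S)C[S,S]$. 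A short argument from Lemma~\ref{Nik} shows that the integer $\iota$ attached to a rank-$s$ PSD matrix forces $\lambda_{\iota+1}=\cdots=\lambda_s$, so $\Gamma_s$ of such a matrix is the logarithm of the product of its nonzero eigenvalues; hence $f_{{\tiny\mbox{DDFact}}}(x^*;\Upsilon)=\ldet C[S,S]+\sum_{i\in S}\log\gamma_i-\sum_{i\in S}\log\gamma_i=z(C,s,A,b)\le z_{{\tiny\mbox{DDFact}}}(\Upsilon)$. For \ref{thm:fact}.ii, since $x\mapsto F_{{\tiny\mbox{DDFact}}}(x;\Upsilon)$ is linear and $-\sum_i x_i\log\gamma_i$ is linear, concavity of $f_{{\tiny\mbox{DDFact}}}(\cdot;\Upsilon)$ on $\dom(f_{{\tiny\mbox{DDFact}}};\Upsilon)_+$ reduces to concavity of $\Gamma_s$ on $\{X\succeq 0:\rank X\ge s\}$, which is the result of \cite{nikolov2015randomized} (details in \cite{chen2023computing}).

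The core is \ref{thm:fact}.iii, and it is the main obstacle. Because $x\mapsto F_{{\tiny\mbox{DDFact}}}(x;\Upsilon)$ is \emph{linear}, it is enough to show that $\Gamma_s$ is generalized differentiable with respect to $\{X\succeq 0:\rank X\ge s\}$ at $X_0:=F_{{\tiny\mbox{DDFact}}}(x;\Upsilon)=Q\Diag(\lambda)Q^\top$, with generalized gradient $Q\Diag(\beta)Q^\top$; composing with the linear map and adding the derivative $-\log\Upsilon$ of $-\sum_i x_i\log\gamma_i$ then produces $g_x(x;\Upsilon)=\Upsilon\circ\diag(FQ\Diag(\beta)Q^\top F^\top)-\log\Upsilon$ by the chain rule, and at the same time gives the $F,Q$-invariance: the value of $\Gamma_s$, hence its generalized gradient, is intrinsic (use \cite[Theorem~2.2]{chen2023computing}), and $\beta$ is constant on each eigenspace of $X_0$ because the strict inequality $\lambda_\iota>\tfrac{1}{s-\iota}\sum_{\ell>\iota}\lambda_\ell\ge\lambda_{\iota+1}$ of Lemma~\ref{Nik} means the ``$\iota$-cut'' never splits an eigenspace. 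To establish the generalized differentiability of $\Gamma_s$ at $X_0$ I would first prove that $\phi_s$ is $C^1$ on a full (two-sided) neighborhood of $\lambda(X_0)$ in $\mathbb{R}^k$; there are two potential sources of non-smoothness. (a) The integer $\iota$ can vary: writing $\psi_j(\lambda):=\sum_{\ell\le j}\log\lambda_\ell+(s-j)\log(\tfrac{1}{s-j}\sum_{\ell>j}\lambda_\ell)$, a direct computation shows that along the sole interface between $\iota=j$ and $\iota=j+1$ (where $\lambda_{j+1}=\tfrac{1}{s-j}\sum_{\ell>j}\lambda_\ell$) the functions $\psi_j$ and $\psi_{j+1}$ have the same value and the same gradient; since near $\lambda(X_0)$ only $\iota$ and its immediate neighbors occur, each on a region bounded by such interfaces, $\phi_s$ is a $C^1$ gluing of the smooth functions $\psi_\iota$, and the strict inequality above guarantees every active cut sits at a genuine spectral gap. (b) $X_0$ may be rank-deficient ($\lambda_{r+1}=\cdots=\lambda_k=0$), i.e.\ a boundary point of the PSD cone where classical differentiability is undefined; but the only denominators appearing in $\psi_\iota$, namely $\lambda_1,\dots,\lambda_\iota$ and $\tfrac{1}{s-\iota}\sum_{\ell>\iota}\lambda_\ell\ge\lambda_s>0$ (here I use $\rank X_0\ge s>\iota$), stay bounded away from $0$, so $\psi_\iota$, and therefore $\phi_s$, extends smoothly past those zeros. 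Then I would invoke the standard differentiability theory for spectral functions (eigenvalue multiplicities are not an obstruction to differentiability of the spectral function): $\Gamma_s=\phi_s\circ\lambda$ admits a $C^1$ extension $\widehat\Gamma_s$ to a neighborhood of $X_0$ in $\mathbb{S}^n$ with $\nabla\widehat\Gamma_s(X_0)=Q\Diag(\nabla\phi_s(\lambda(X_0)))Q^\top=Q\Diag(\beta)Q^\top$. Since $\widehat\Gamma_s$ agrees with $\Gamma_s$ on $\dom(\Gamma_s)$ near $X_0$ and $F_{{\tiny\mbox{DDFact}}}(\cdot;\Upsilon)$ carries $\dom(f_{{\tiny\mbox{DDFact}}};\Upsilon)_+$ into $\dom(\Gamma_s)$, generalized differentiability of $f_{{\tiny\mbox{DDFact}}}(\cdot;\Upsilon)$ with respect to $\dom(f_{{\tiny\mbox{DDFact}}};\Upsilon)_+$ is immediate --- this is precisely the extension mechanism of Remark~\ref{rmk:generaldiff}.

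For \ref{thm:fact}.iv, the map $\Upsilon\mapsto F_{{\tiny\mbox{DDFact}}}(x;\Upsilon)$ is polynomial and, since each $\gamma_i>0$, has range (hence rank) independent of $\Upsilon\in\mathbb{R}^n_{++}$, so on the open set $\mathbb{R}^n_{++}$ the map $\Upsilon\mapsto f_{{\tiny\mbox{DDFact}}}(x;\Upsilon)=\widehat\Gamma_s(F_{{\tiny\mbox{DDFact}}}(x;\Upsilon))-\sum_i x_i\log\gamma_i$ is a composition of $C^1$ maps; the chain rule delivers $g_\Upsilon(x;\Upsilon)=x\circ\diag(FQ\Diag(\beta)Q^\top F^\top)-\Diag(\Upsilon)^{-1}x$, with $F,Q$-invariance inherited as in \ref{thm:fact}.iii. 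For the \hyperlink{MESP}{MESP} identity I would use the homogeneity relation $f_{{\tiny\mbox{DDFact}}}(\theta x;\mathbf{e})=s\log\theta+f_{{\tiny\mbox{DDFact}}}(x;\mathbf{e})$ for $\theta>0$ (valid because $\Gamma_s(\theta X)=s\log\theta+\Gamma_s(X)$ and $\sum_i(\theta x_i)\log 1=0$); differentiating at $\theta=1$ in the direction $x^*$ --- legitimate by \ref{thm:fact}.iii, since $\theta x^*$ stays in $\dom(f_{{\tiny\mbox{DDFact}}};\mathbf{e})_+$ --- gives $g_x(x^*;\mathbf{e})^\top x^*=s$, which with $\mathbf{e}^\top x^*=s$ yields $(g_x(x^*;\mathbf{e})-\mathbf{e})^\top x^*=0$; as $g_\Upsilon(x^*;\mathbf{e})=x^*\circ(g_x(x^*;\mathbf{e})-\mathbf{e})$, it then remains to show $g_x(x^*;\mathbf{e})_i=1$ for each $i$ in the support of $x^*$ (not merely that the $x^*$-weighted average of these numbers is $1$). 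This last step is the delicate point, and I would pin down the relevant Lagrange multiplier using the first-order optimality conditions for $x^*$ in $\max\{f_{{\tiny\mbox{DDFact}}}(x;\mathbf{e}):\mathbf{e}^\top x=s,\,0\le x\le\mathbf{e}\}$ (\hyperlink{MESP}{MESP} has no $Ax\le b$) together with the $o$-scaling invariance of \cite[Theorem~2.1]{chen2023computing} and the explicit form of $\beta$ in Definition~\ref{def:fact}, paying particular attention to support components at the upper bound $1$.

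For \ref{thm:fact}.v, continuity of $g_x$ and $g_\Upsilon$ on $\dom(f_{{\tiny\mbox{DDFact}}};\Upsilon)_+\times\mathbb{R}^n_{++}$ follows by assembling continuous pieces: $(x,\Upsilon)\mapsto F_{{\tiny\mbox{DDFact}}}(x;\Upsilon)$ is continuous; $X\mapsto Q\Diag(\beta)Q^\top$ is continuous on $\dom(\Gamma_s)$ because it equals $\nabla\widehat\Gamma_s(X)$ with $\widehat\Gamma_s\in C^1$ --- here the point is again that $\phi_s\in C^1$ makes $\beta=\nabla\phi_s$ a continuous function of the eigenvalues, and the spectral-function gradient is continuous notwithstanding eigenvalue multiplicities, since the gap $\lambda_\iota>\lambda_{\iota+1}$ persists under small perturbations; finally $\diag(FQ\Diag(\beta)Q^\top F^\top)$, the Hadamard products with $x$ or $\Upsilon$, and the terms $\log\Upsilon$ and $\Diag(\Upsilon)^{-1}x$ are all continuous on the stated set. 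Thus the whole theorem rests on the $C^1$-regularity of $\phi_s$ near rank-deficient matrices with clustered top eigenvalues proved in \ref{thm:fact}.iii; the hard part is handling simultaneously the combinatorial switching of $\iota$ (via the first-order agreement of the $\psi_j$) and the fact that $\dom(\Gamma_s)$ has empty interior along rank-deficient PSD matrices (via the extension to a genuine neighborhood of $\lambda(X_0)$ in $\mathbb{R}^k$, legitimized by $\rank X_0\ge s$), after which everything else is bookkeeping or a citation.
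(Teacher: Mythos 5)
Your treatment of \ref{thm:fact}.i--iii is a genuinely different route from the paper's. For \ref{thm:fact}.iii the paper never tries to show that $\Gamma_s$ itself is smooth: it characterizes the directional derivative of $\Gamma_s$ as $\inf_{G\in\partial\Gamma_s(X)}\Trace(G^\top D)$ (after proving continuity of $\Gamma_s$ on its domain so that Moreau's formula applies at boundary points), shows that after composing with $F_{{\tiny\mbox{DDFact}}}(\cdot;\Upsilon)$ the infimum is always attained at the minimal $\beta$ of Definition~\ref{def:fact} --- the key observations being that $F_{i\cdot}q_j=0$ whenever $x_i>0$ and $j>r$, and that $d_i\ge 0$ whenever $x_i=0$ --- so the directional derivative is \emph{linear} in $d$, and then upgrades linear directional derivatives to generalized (Fr\'echet-type) differentiability by a compactness/uniform-continuity argument covering the sphere of directions with finitely many rays. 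You instead propose to extend $\Gamma_s$ to a genuine $C^1$ function on a full neighborhood of the rank-deficient point, by a $C^1$ gluing of the branches $\psi_j$ (your first-order matching of $\psi_j$ and $\psi_{j+1}$ across the interface checks out) followed by the Lewis-type transfer of smoothness from the symmetric function to the spectral function. If carried out, this is arguably cleaner and stronger (it yields generalized differentiability of $\Gamma_s$ itself, not only of the composition), but the two load-bearing steps are only sketched: the simultaneous agreement of \emph{all} branches $\psi_j$, $j\in\{\iota\}\cup[\iota+1,\iota_e]$, at a point where $\lambda_{\iota+1}=\frac{1}{s-\iota}\sum_{\ell>\iota}\lambda_\ell$ (several regions meet there, not just two), and the verification that the symmetrized $\phi_s$ is $C^1$ at an eigenvalue vector with ties and trailing zeros so that the spectral-function theorem applies. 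These are fixable, but they are precisely where the work lives.

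The genuine gap is in \ref{thm:fact}.iv. Your homogeneity argument ($\Gamma_s(\theta X)=s\log\theta+\Gamma_s(X)$, differentiate at $\theta=1$) only yields the aggregate identity $(g_x(x^*;\mathbf{e})-\mathbf{e})^\top x^*=0$, and you correctly observe that the theorem requires the componentwise statement $g_i(x^*;\mathbf{e})=1$ for every $i$ in the support of $x^*$. But you then leave that step as ``the delicate point'' with a list of ingredients rather than an argument. That step \emph{is} the proof: in the paper it occupies most of the argument for \ref{thm:fact}.iv and combines (a) the closed-form KKT multipliers of \cite{li2020best} for the unconstrained (MESP) relaxation, obtained by sorting the entries of $g^*$ and setting $\tau^*=g^*_{\sigma(s)}$; (b) the vanishing duality gap $\mathbf{e}^\top\nu^*+\tau^*s-s=0$ from \cite{chen2023computing}, which converts the KKT identity into $\sum_i x^*_{\sigma(i)}g^*_{\sigma(i)}=\sum_{i\le s}g^*_{\sigma(i)}=s$; and (c) the pointwise bound $g^*_{\sigma(i)}\le 1$, proved via $\beta_j\le 1/\lambda_j$ and a Sherman--Morrison-type monotonicity of the Moore--Penrose inverse, after which the sorted sum being exactly $s$ forces $g^*_{\sigma(1)}=\cdots=g^*_{\sigma(s)}=1$ and equality propagates to any support elements beyond position $s$. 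None of (a)--(c) is present in your proposal, and the aggregate identity alone does not imply the componentwise one (a weighted average of $1$ does not force each term to be $1$). Until you supply this argument, the stationarity claim $\left.g_\Upsilon(x^*;\Upsilon)\right|_{\Upsilon=\mathbf{e}}=0$ is unproved.
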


\begin{remark}
\cite{nikolov2015randomized} established Theorem \ref{thm:fact}.\emph{i} for $\Upsilon:=\mathbf{e}$, and hence only regarded as a function of $x$, which was developed further in \cite{li2020best}. We generalize this result to the situation where $\Upsilon\in \mathbb{R}^n_{++}$ and is varying.
We note that the o-scaled factorization bound for \ref{CMESP} is
invariant under the scale factor (see \cite{chen2023computing}), so the
use of any type of scaling in the context of the \ref{DDFact} bound is completely new.
%%%%%%%%%%
Theorem \ref{thm:fact}.\emph{ii} is a result of \cite{nikolov2015randomized}, with details filled in by \cite[Section 3.4.2]{FLbook}.
%%%%%%%%%%
Theorem \ref{thm:fact}.\emph{iii} is the first differentiablity result of any type for the
\ref{DDFact} bound.
% The proof methods (see below) are quite technical and novel.
These results illuminate the success of  BFGS-based methods for calculating the \ref{DDFact} bound, not fully anticipated by previous works which exposed only supgradients connected to   \ref{DDFact}.
%%%%%%%%%%
Theorem \ref{thm:fact}.\emph{iv} provides the potential for fast algorithms leveraging BFGS-based methods to improve the \ref{DDFact} bound by g-scaling, as we will see in experiments \S\ref{sec:num}.
These observations and Theorem \ref{thm:fact}.\emph{iv} leave open
the interesting question of whether g-scaling can help
the \ref{DDFact} bound for \hyperlink{MESP}{MESP};
we can interpret Theorem \ref{thm:fact}.\emph{iv} as a partial result toward a negative answer. Theorem \ref{thm:fact}.v is a consequence of Theorems \ref{thm:fact}.iii,iv.
\end{remark}

\begin{proof}[Theorem \ref{thm:fact}.i,ii]

% \begin{itemize}
%     \item[\ref{thm:fact}.i,ii:]
    These are essentially  results of \cite{nikolov2015randomized}; see also \cite[Section 3.4]{FLbook}. Intuitively, \ref{DDFact} is the Lagrangian dual of the Lagrangian dual of a nonconvex continuous relaxation of \ref{CMESP} (see \cite{chen2023computing}). Therefore, \ref{DDFact} has a concave objective function, and
    the optimal value $z_{{\tiny\mbox{DDFact}}} (\Upsilon)$ serves as valid upper bound for the optimal value of \ref{CMESP}.
    % \item[\ref{thm:fact}.ii:] This is essentially a  result of \cite{nikolov2015randomized}, with details filled in by \cite[Section 3.4.2]{FLbook}. Intuitively, \ref{DDFact} is the Lagrangian dual of the Lagrangian dual of a nonconvex continuous relaxation of \ref{CMESP} (see \cite{chen2023computing}). As a result, it has a concave objective function.
    $\hfill \square$
%\end{itemize}

\end{proof}

% For clarity, we first introduce and prove several necessary preliminaries before going into the main part of proving Theorem \ref{thm:fact}.iii $\textstyle{\&}$ iv.

% The first two lemmas regards the properties of $\Gamma_s(X)$.

% \begin{lem}\label{FactX:prop1}
% For $X\in \dom\left(\Gamma_s\right)$, $\Gamma_s(X)= \min_{\Lambda \succ 0}\left\{- \ldet_{s}(\Lambda) + \Trace(X\Lambda) \right\}$ where $\ldet_{s}(\Lambda)$ denotes the product of the $s$ smallest eigenvalues of $\Lambda$. Furthermore, $\Gamma_s(X)$ is closed and concave.
% \end{lem}
% \begin{proof}
%     The proof of characterization can be found in \cite[Lemma 3]{li2020best}. For any fixed $\Lambda$, $- \ldet_{s}(\Lambda) + \Trace(X\Lambda)$ is an affine function of $X$ with closed and convex epigraph. The epigraph of $\Gamma_s(X)$ is essentially the intersection of all such epigraphs, thus also closed and convex, which means that $\Gamma_s(X)$ is closed and concave.
% \end{proof}

% \begin{lem}\label{FactX:prop2}
%     For $X\in \dom\left(\Gamma_s\right)$, $\Gamma_s(X)$ can be written as the infimum of affine majorants above it, in other words,
%     \begin{align*}
%     \Gamma_s(X)=\inf \{\alpha(X): \alpha(X)=\Trace\left(A^\top X\right)+b \ge \Gamma_s(X), ~\forall X \in \dom\left(\Gamma_s\right)\}.
% \end{align*}
% \end{lem}
% \begin{proof}
%     From Lemma \ref{FactX:prop1}, we know that $\Gamma_s(X)$ is concave and closed. Then the conclusion follows from  the Frenchel biconjugation (\cite[Theorem 4.2.1]{borwein2006convex}).
% \end{proof}

Toward establishing the generalized differentiability of $f_{{\tiny\mbox{DDFact}}}(x;\Upsilon)$, we begin by characterizing the directional derivatives. Toward this end, our first step is to derive the supdifferential of the objective of $\Gamma_s(X)$ with respect to $X\in \dom(\Gamma_s)$.
% , which is essentially the result of \cite[Proposition 2]{li2020best}.

\begin{prop}{\cite[Proposition 2]{li2020best}}\label{FactX:subg}
Given $X\in \dom(\Gamma_s)$ with rank $r \in[s, k]$, suppose that its eigenvalues are $\lambda_{1} \geq \cdots \geq \lambda_{r}>\lambda_{r+1}=\cdots=\lambda_{k}=0$ and $X=Q \operatorname{Diag}(\lambda) Q^{\top}$ with an orthonormal matrix $Q$. Then the supdifferential of the function $\Gamma_{s}(X)$ at $X$ denoted by $\partial \Gamma_{s}(X)$ is

\begin{align*}
&\partial \Gamma_{s}(X)=\left\{ \vphantom{ \beta_{i}=\textstyle\frac{s-\iota}{\sum_{i \in[\iota+1, k]} \lambda_{i}}}
Q \operatorname{Diag}(\beta) Q^{\top}: X=Q \operatorname{Diag}(\lambda) Q^{\top}, Q \text { is orthonormal, } \right.\\
&\quad \lambda_{1} \geq \cdots \geq \lambda_{r}>\lambda_{r+1}=\cdots=\lambda_{k}=0, \\
&\quad \left.\beta \in \operatorname{conv}\left\{\beta: \beta_{i}= \textstyle\frac{1}{\lambda_{i}}, \forall i \in[\iota], \beta_{i}=\frac{s-\iota}{\sum_{i \in[\iota+1, k]} \lambda_{i}}, \forall i \in[\iota+1, r], \right.\right.\\
& \quad\left. \left. \vphantom{\beta_{i}= \textstyle\frac{1}{\lambda_{i}}, \forall i \in[\iota], \beta_{i}=\frac{s-\iota}{\sum_{i \in[\iota+1, k]} \lambda_{i}}, \forall i \in[\iota+1, r],} \beta_{i} \geq \beta_{r}, \forall i \in[r+1, k]\right\}\right\},
\end{align*}
where $\iota$ is the unique integer defined in Lemma \ref{Nik}.
\end{prop}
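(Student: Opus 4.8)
The starting point is that $\Gamma_s$ is an orthogonally-invariant (spectral) matrix function. Extend $\phi_s$ from the ordered cone to a permutation-symmetric function on $\mathbb{R}^k_+$ by $\phi_s(\lambda):=\phi_s(\lambda^{\downarrow})$, where $\lambda^{\downarrow}$ is the nonincreasing rearrangement of $\lambda$; then $\Gamma_s=\phi_s\circ\lambda$, where $\lambda(\cdot)$ is the eigenvalue map. The plan is to (i) record that $\phi_s$ is a closed proper \emph{concave} symmetric function on $\dom\phi_s=\{\lambda\in\mathbb{R}^k_+ : \text{at least } s \text{ coordinates of } \lambda \text{ are positive}\}$ (a convex set) --- its concavity is exactly the concavity of $\Gamma_s$ (Theorem~\ref{thm:fact}.\emph{ii}, i.e.\ the result of \cite{nikolov2015randomized} with details in \cite[Section 3.4.2]{FLbook}) restricted to diagonal matrices --- and (ii) invoke the subdifferential calculus for spectral functions of symmetric matrices, which for a closed proper concave permutation-invariant $f$ yields
\[
\partial(f\circ\lambda)(X)=\bigl\{Q\Diag(\mu)Q^{\top} : X=Q\Diag(\lambda(X))Q^{\top},\ Q\ \text{orthonormal},\ \mu\in\partial f(\lambda(X))\bigr\}.
\]
Granting (ii), the proposition reduces to describing $\partial\phi_s$ at the vector $\lambda=(\lambda_1,\dots,\lambda_r,0,\dots,0)$ with $\lambda_1\ge\cdots\ge\lambda_r>0$.

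For that computation, fix the integer $\iota$ supplied by Lemma~\ref{Nik}. On the open region where $\lambda_1>\cdots>\lambda_\iota$, where the Lemma~\ref{Nik} inequalities $\lambda_\iota>\tfrac{1}{s-\iota}\sum_{\ell>\iota}\lambda_\ell>\lambda_{\iota+1}$ hold strictly, and where all coordinates are positive, $\phi_s$ is smooth and a one-line differentiation gives $\nabla\phi_s=\beta$ with $\beta_i=1/\lambda_i$ for $i\le\iota$ and $\beta_i=(s-\iota)/\sum_{\ell>\iota}\lambda_\ell$ for $i>\iota$. The superdifferential at a general point of $\dom\phi_s$ then collects three standard concave-analysis contributions: (a) on a block of coordinates whose $\lambda$-values coincide, $\phi_s$ is invariant under permuting the block, so $\partial\phi_s$ contains the convex hull of the vectors produced by all admissible tie-breakings --- this yields the ``$\conv$'' in the statement together with the freedom of $\beta$ on blocks of equal eigenvalues; (b) across the hypersurface $\lambda_\iota=\tfrac{1}{s-\iota}\sum_{\ell>\iota}\lambda_\ell$ the value $\iota$ changes, but there $\phi_s$ is the minimum of the two adjacent smooth branches (it is concave) and their gradients agree on the interface, so no new generators appear; (c) for a coordinate $i$ with $\lambda_i=0$ (i.e.\ $i\in[r+1,k]$), regarding $\phi_s$ as a concave function of $\lambda_i\ge 0$ alone, its one-sided derivative at $0$ equals $\beta_r=(s-\iota)/\sum_{\ell>\iota}\lambda_\ell$, and hence its superdifferential component in that coordinate is the half-line $[\beta_r,\infty)$ --- exactly the constraint $\beta_i\ge\beta_r$ for $i\in[r+1,k]$. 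Combining (a)--(c) reproduces the convex set of admissible $\beta$ in the statement, and composing with the formula in (ii) gives the claimed $\partial\Gamma_s(X)$.

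The crux is justifying (ii) \emph{at a point on the boundary of the domain}: when $\rank X=r$ is close to $s$, $X$ lies on the relative boundary of $\dom\Gamma_s$ and $\Gamma_s\equiv-\infty$ just outside, whereas the textbook spectral-subdifferential theorems are cleanest for functions finite near $X$. I would handle this by treating $\Gamma_s$ as a closed proper concave function and verifying the two inclusions directly. The inclusion ``$\supseteq$'' is the easy one: for $\beta\in\partial\phi_s(\lambda(X))$, whose coordinates are automatically ordered \emph{oppositely} to those of $\lambda(X)$ (the only non-obvious step, $1/\lambda_\iota<\beta_{\iota+1}$, being precisely the Lemma~\ref{Nik} inequality), the supergradient inequality for $\phi_s$ together with a von Neumann / rearrangement trace inequality shows that $Q\Diag(\beta)Q^{\top}$ supports $\Gamma_s$ at $X$. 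The inclusion ``$\subseteq$'' follows by restricting any supergradient to the commuting family $\{Q\Diag(t)Q^{\top} : t\ge 0\}$ and to rank-preserving positive-semidefinite perturbations of $X$, which forces it to be diagonal in the $Q$-basis with diagonal lying in $\partial\phi_s(\lambda(X))$. Since this proposition is quoted verbatim from \cite[Proposition~2]{li2020best}, the cleanest exposition simply cites that proof; the sketch above is the route I would take to reconstruct it.
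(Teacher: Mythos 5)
The paper does not prove Proposition~\ref{FactX:subg} at all: it is imported verbatim, with citation, from \cite[Proposition 2]{li2020best}, so there is no in-paper argument to compare against. Judged on its own terms, your reconstruction is sound and follows what is essentially the standard route: write $\Gamma_s=\phi_s\circ\lambda$ as a spectral function, transfer the superdifferential through the eigenvalue map via Lewis's theorem on convex spectral functions, and compute $\partial\phi_s$ coordinate-wise. Your verification that the two smooth branches of $\phi_s$ have matching gradients on the interface where $\iota$ would change, and that the one-sided derivative in a zero coordinate yields exactly the half-line $\beta_i\ge\beta_r$ (equivalently, the normal-cone contribution of $\{\lambda\ge 0\}$ to the superdifferential), are the right checks and both go through. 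Two small remarks. First, the ``crux'' you identify is less of an obstacle than you suggest: Lewis's transfer formula is stated for closed proper convex (hence, with a sign flip, concave) symmetric functions and holds at every point of the domain, boundary included, so one does not need to re-derive the two inclusions by hand---though your direct argument (von Neumann's trace inequality with $\beta$ anti-ordered to $\lambda$, which is exactly the Lemma~\ref{Nik} inequality, for ``$\supseteq$''; the commutation argument $[G,X]=0$ plus restriction to $Q\Diag(t)Q^{\top}$ for ``$\subseteq$'') is correct, with the caveat that commutation only places $G$ as diagonal in \emph{some} eigenbasis of $X$, which suffices because the statement quantifies over all diagonalizing $Q$. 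Second, once $\iota$ and the eigenvalues are fixed, the inner set of admissible $\beta$ in the statement is already convex (fixed coordinates on $[1,r]$, half-lines on $[r+1,k]$), so the $\conv$ and your tie-breaking discussion in (a) add nothing beyond what the uniqueness of $\iota$ in Lemma~\ref{Nik} and the equality of $\beta_j$ on tied eigenvalues already give; this is harmless but worth noticing.
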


\begin{remark}
   If $X \succ 0$, then $\beta$ is uniquely determined, resulting in a singleton supdifferential $\partial \Gamma_{s}(X)$ and differentiability of $\Gamma_s(X)$ at $X$.
   % , owing to the concavity of $\Gamma_s(X)$ with respect to $X$ \mf{the last part of this sentence is not clear to me} \zc{I want to say a concave function with a unique sup-gradient at some point is actually differentiable at that point}.
   This further implies the differentiability of $f_{{\tiny\mbox{DDFact}}}(x;\Upsilon)$ in $x$ by the chain rule when $F_{{\tiny\mbox{DDFact}}}(x;\Upsilon) \succ 0$. However, the supdifferential $\partial \Gamma_{s}(X)$ is not a singleton when $X$ is located on the boundary of the positive-semidefinite cone. This indicates that $\Gamma_{s}(X)$ is \emph{not} differentiable at such points.
   % Remember that
   %  $$ f_{{\tiny\mbox{DDFact}}}(x;\Upsilon)= \strut \Gamma_s(F_{{\tiny\mbox{DDFact}}}(x;\Upsilon)) -\textstyle\sum_{i=1}^n x_i \log \gamma_i.$$
   %  Suppose the rank of $C $ is smaller than $k$, because
   %  \begin{align*}
   %      &\rank\left(F_{{\tiny\mbox{DDFact}}}(x;\Upsilon)\right)\\
   %      = & \rank\left(F^\top \Diag(x\circ \Upsilon) F\right)\\
   %      =& \rank\left( \Diag(x\circ \Upsilon)^{1/2} F F^\top \Diag(x\circ \Upsilon)^{1/2}\right)\\
   %      =& \rank\left( \Diag(x\circ \Upsilon)^{1/2}C \Diag(x\circ \Upsilon)^{1/2}\right)\\
   %      \le & \rank(C),
   %  \end{align*}
   %  we will always have $\rank\left(F_{{\tiny\mbox{DDFact}}}(x;\Upsilon)\right) < k$, i.e., $F_{{\tiny\mbox{DDFact}}}(x;\Upsilon)$ lies on the boundary of the positive semidefinite cone. This means that $f_{{\tiny\mbox{DDFact}}}(x;\Upsilon)$ is always non-differentiable in $F_{{\tiny\mbox{DDFact}}}(x;\Upsilon)$ over the feasible region.
   However, as we will show later, such non-differentiability does not really transfer to $x$. In fact, $f_{{\tiny\mbox{DDFact}}}(x;\Upsilon)$ is generalized differentiable at every $x\in \dom\left(f_{{\tiny\mbox{DDFact}}}; \Upsilon\right)_+$\,. In other words, even if $\Gamma_s(F_{{\tiny\mbox{DDFact}}}(x;\Upsilon))$ is non-differentiable in $F_{{\tiny\mbox{DDFact}}}(x;\Upsilon)$, $f_{{\tiny\mbox{DDFact}}}(x;\Upsilon) = \Gamma_s(F_{{\tiny\mbox{DDFact}}}(x;\Upsilon)) -\sum_{i=1}^n x_i \log \gamma_i$  is still generalized differentiable in $x$.
\end{remark}

The subsequent step involves computing the directional derivative of $\Gamma_s(X)$ at $X$, using the supdifferential characterized in Proposition \ref{FactX:subg}. It is a well-known fact that if $X$ is located in the interior of $\dom(\Gamma_s)$,  then
\begin{align*}
\Gamma'_s(X;D)=\inf_{G\in \partial \Gamma_s( X)} \Trace(G^\top D),
\end{align*}
where $D$ is a feasible direction at $X$ in $\dom(\Gamma_s)$; see e.g. \cite[Theorem 23.4]{rockafellar1997convex}. However, in our current context, $X$ might lie on the boundary of $\dom(\Gamma_s)$. Fortunately, \cite[p. 65]{moreau1966fonctionnelles}  provides a result that ensures that the same formula holds if $\Gamma_s(X)$ is continuous at $X$. Thus, our first step is to establish the continuity of $\Gamma_s(X)$ at $X \in \dom(\Gamma_s)$.
\begin{lem}\label{FactX:continuity}
$\Gamma_{s}(X)$ is continuous on its domain.
%$\dom\left(\Gamma_s\right)$.
\end{lem}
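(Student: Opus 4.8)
The plan is to prove that $\Gamma_s(X)$ is continuous on $\dom(\Gamma_s) = \{X \succeq 0 : \rank(X) \ge s\}$ by reducing the question to the continuity of $\phi_s(\lambda)$ on the set of admissible eigenvalue vectors, together with the continuity of the eigenvalue map $X \mapsto (\lambda_1(X), \dots, \lambda_k(X))$.

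First I would recall that the map sending a symmetric matrix to its sorted vector of eigenvalues is continuous (indeed $1$-Lipschitz in a suitable norm, by Weyl's inequalities), so $\Gamma_s(X) = \phi_s(\lambda_1(X),\dots,\lambda_k(X))$ is continuous at $X$ provided $\phi_s$ is continuous at $\lambda(X)$ relative to the domain $\Lambda := \{\lambda \in \mathbb{R}^k_+ : \lambda_1 \ge \cdots \ge \lambda_k,\ \lambda_s > 0\}$ (the last condition encodes $\rank \ge s$, since at least $s$ eigenvalues are positive). So the crux is: \emph{show $\phi_s$ is continuous on $\Lambda$.}

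The main obstacle — and the only real content — is that the integer $\iota = \iota(\lambda)$ from Lemma~\ref{Nik} is \emph{not} a continuous function of $\lambda$: it jumps as $\lambda$ crosses the threshold hyperplanes where $\lambda_\iota = \frac{1}{s-\iota}\sum_{\ell > \iota}\lambda_\ell$. The resolution is to show that $\phi_s$ nonetheless ``glues'' continuously across these jumps. Concretely, I would fix $\bar\lambda \in \Lambda$ with $\bar\iota := \iota(\bar\lambda)$, and consider the finitely many candidate values $p \in \{0,1,\dots,s-1\}$ that $\iota$ can take at points near $\bar\lambda$; for each such $p$ define the ``formula'' $\psi_p(\lambda) := \sum_{\ell=1}^{p}\log\lambda_\ell + (s-p)\log\!\big(\tfrac{1}{s-p}\sum_{\ell=p+1}^{k}\lambda_\ell\big)$, which is manifestly continuous on the region where its arguments are positive (note $\sum_{\ell=p+1}^k \lambda_\ell > 0$ near $\bar\lambda$ since $\lambda_s>0$ and $p < s$). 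It suffices to check that whenever two consecutive candidate indices $p$ and $p+1$ are both ``active'' near a boundary point — i.e. at a point $\lambda^\circ$ where $\lambda^\circ_{p+1} = \frac{1}{s-p}\sum_{\ell \ge p+1}\lambda^\circ_\ell$ — the two formulas agree: $\psi_p(\lambda^\circ) = \psi_{p+1}(\lambda^\circ)$. This is a short direct computation: at such $\lambda^\circ$ one has $\sum_{\ell=p+2}^k \lambda^\circ_\ell = (s-p-1)\lambda^\circ_{p+1}$ and $\frac{1}{s-p}\sum_{\ell=p+1}^k\lambda^\circ_\ell = \lambda^\circ_{p+1} = \frac{1}{s-p-1}\sum_{\ell=p+2}^k\lambda^\circ_\ell$, so the term $(s-p)\log(\cdots)$ in $\psi_p$ equals $\log\lambda^\circ_{p+1} + (s-p-1)\log(\cdots)$, which is exactly the extra $\log\lambda^\circ_{p+1}$ term plus the tail term appearing in $\psi_{p+1}$.

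Given this agreement on overlaps, continuity of $\phi_s$ at $\bar\lambda$ follows: on a small enough neighborhood of $\bar\lambda$ in $\Lambda$, every point $\lambda$ has $\iota(\lambda) \in \{p : $ the defining inequalities of Lemma~\ref{Nik} for index $p$ are ``close to tight or satisfied at $\bar\lambda\}$, a finite set; and on the (closed) region where $\iota(\lambda) = p$, we have $\phi_s(\lambda) = \psi_p(\lambda)$, a continuous function; since these finitely many closed pieces cover the neighborhood and the continuous functions $\psi_p$ agree on all pairwise overlaps (the overlaps being contained in the threshold sets just analyzed, using that adjacent pieces meet only at such thresholds — if $p < q$ are both attained arbitrarily near $\bar\lambda$ one can interpolate through all intermediate indices and apply the consecutive-agreement repeatedly), the pasting lemma gives continuity of $\phi_s$ at $\bar\lambda$. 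Composing with the continuous eigenvalue map yields continuity of $\Gamma_s$ at every $X \in \dom(\Gamma_s)$, which is the claim. The one point requiring care is the combinatorial bookkeeping that the ``active'' index set near $\bar\lambda$ is exactly an interval of integers and that adjacent pieces overlap only on threshold sets; this is where I would spend the most effort, but it is a finite, elementary argument driven entirely by the monotonicity structure of Lemma~\ref{Nik}.
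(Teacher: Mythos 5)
Your proposal is correct and follows essentially the same route as the paper: both reduce to the continuity of the sorted-eigenvalue map and then handle the discontinuity of the index $\iota$ by verifying that the candidate formulas for $\phi_s$ agree at the threshold where $\lambda_{\iota+1}$ equals the averaged tail, iterating this consecutive agreement across the whole block of tied eigenvalues (the paper's chain of equalities \eqref{FactX:continuity-eqn1} is exactly your repeated $\psi_p=\psi_{p+1}$ identity). The only cosmetic difference is packaging --- the paper argues directly with a matrix perturbation $P$ and two cases (strict versus tight inequality at $\iota$), while you phrase it as a pasting argument on the eigenvalue vector; note only that the sets $\{\lambda:\iota(\lambda)=p\}$ are not closed as written, so the pasting should be applied to their closures, where $\phi_s=\psi_p$ still holds by your threshold computation.
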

\begin{proof}
Consider $X\in \dom\left(\Gamma_s\right)$ with eigenvalues $\lambda_1\ge\lambda_2\ge \cdots\ge \lambda_k\ge 0$ and $\iota$ defined in Lemma \ref{Nik}. Let $P\in \mathbb{S}^n$ be such that $X+P\in \dom\left(\Gamma_s\right)$ and $\hat \lambda_1\ge\hat \lambda_2\ge \cdots\ge \hat \lambda_k$ be the eigenvalues of $X+P$ with $\hat \iota$ again defined in Lemma \ref{Nik}. We will use the continuity of eigenvalues (with respect to entries of the matrix) to prove the result.

We discuss two sub-cases:
\begin{itemize}
    \item[1.] $\lambda_{\iota}>\frac{1}{s-\iota}\sum_{\ell=\iota+1}^k \lambda_{\ell}> \lambda_{\iota +1}$\,. Then for $\|P\|$ small enough, by the continuity of eigenvalues, we have $\hat \lambda_{\iota}>\frac{1}{s-\iota}\sum_{\ell=\iota+1}^k \hat\lambda_{\ell}> \hat\lambda_{\iota +1}$\,, which implies $\hat \iota=\iota$. Again by the continuity of eigenvalues and $\log(\cdot)$, $\Gamma_s(X)$ is continuous at $X$ on $\dom\left(\Gamma_s\right)$.
    \item[2.] $\lambda_{\iota}>\frac{1}{s-\iota}\sum_{\ell=\iota+1}^k \lambda_{\ell}= \lambda_{\iota +1}$\,. We have to be more careful in this case as any small $\|P\|$ can make $\hat{\iota}$ different from $\iota$. We first characterize a range where $\hat{\iota}$ should lie within. Let $\iota_e:=\max\{i: \lambda_i=\lambda_{\iota+1}, s> i\ge \iota+1\}$. We claim that $\hat \iota\in [\iota+1, \iota_e]$. Before proving this claim, we demonstrate three preliminary results:

    \begin{itemize}
        \item[(a)] For any $i\le \iota$, $\lambda_i > \frac{1}{s-i}\sum_{\ell=i+1}^k \lambda_{\ell}$\,;
        \item[(b)] For any $i\le \iota-1$, $\frac{1}{s-i}\sum_{\ell=i+1}^k \lambda_{\ell} < \lambda_{i+1}$\,;
        \item[(c)] For any $s> i> \iota_e$, $\lambda_i < \frac{1}{s-i}\sum_{\ell=i+1}^k \lambda_{\ell}$\,.
    \end{itemize}

Note that (a) holds for $i=0$. Assume that there exists some $i\le \iota$ such that $\lambda_i \le \frac{1}{s-i}\sum_{\ell=i+1}^k \lambda_{\ell}$\,. Without loss of generality, let $i$ be the minimum integer satisfying this condition. Obviously $i\ge 1$. Furthermore,
\begin{align*}
    \textstyle\frac{1}{s-i+1}\sum_{\ell=i}^k \lambda_{\ell}=  \frac{\left(\sum_{\ell=i+1}^k \lambda_{\ell}\right) + \lambda_i}{s-i+1} \ge \frac{1}{s-i+1} ((s-i)\lambda_i + \lambda_i) = \lambda_i\,.
\end{align*}
By assumption, we also have that $\lambda_{i-1}>\frac{1}{s-i+1}\sum_{\ell=i}^k \lambda_{\ell}$\,, which together with the above deduction, implies that $\iota = i-1\le \iota-1$, a contradiction.
For (b), if there exists $i\le \iota-1$ with $\frac{1}{s-i}\sum_{\ell=i+1}^k \lambda_{\ell} \ge \lambda_{i+1}$\,, together with (a), we have $\iota = i\le \iota-1$, a contradiction. Finally, (c) comes from
\begin{align*}
        (s-i)\lambda_i < (s-i)\lambda_{\iota+1} = &~ (s-\iota)\lambda_{\iota+1}- (i-\iota)\lambda_{\iota+1}\\
        \le &~ \textstyle\sum_{\ell=\iota+1}^k \lambda_{\ell}- (i-\iota)\lambda_{\iota+1}\\
        = &~ \textstyle\sum_{\ell=\iota+1}^i (\lambda_{i}-\lambda_{\iota+1}) + \sum_{\ell=i+1}^k \lambda_\ell\\
        \le & ~\textstyle\sum_{\ell=i+1}^k \lambda_\ell\,.
    \end{align*}
    With (a--c), and the continuity of eigenvalues with respect to the entries of a matrix, for $\|P\|$ small enough, we have:
    \begin{itemize}
        \item[($\hat a$)] For any $i\le \iota$, $\hat \lambda_i > \frac{1}{s-i}\sum_{\ell=i+1}^k \hat\lambda_{\ell}$\,;
        \item[($\hat b$)] For any $i\le \iota-1$, $\frac{1}{s-i}\sum_{\ell=i+1}^k \hat \lambda_{\ell} < \hat \lambda_{i+1}$\,;
        \item[($\hat c$)] For any $s> i> \iota_e$, $\hat \lambda_i < \frac{1}{s-i}\sum_{\ell=i+1}^k\hat \lambda_{\ell}$\,.
    \end{itemize}
    $(\hat a-\hat c)$ suggest that $\hat \iota \in [\iota+1, \iota_e]$, otherwise the condition $
 \hat \lambda_{\hat \iota}>\frac{1}{s-\hat \iota}\sum_{\ell=\hat \iota+1}^k \hat \lambda_{\ell}$ $\geq \hat \lambda_{\hat \iota +1}$ in Lemma \ref{Nik} will be violated.

From $\frac{1}{s-\iota}\sum_{\ell=\iota+1}^k \lambda_{\ell}= \lambda_{\iota +1}$ and the definition of $\iota_e$\,, we also have
 \begin{align}
     \textstyle\lambda_{\iota+1} = \frac{1}{s-\iota-1}\sum_{\ell=\iota+2}^k \lambda_{\ell}\nonumber
     = &~\textstyle\lambda_{\iota+2} =  \frac{1}{s-\iota-2}\sum_{\ell=\iota+3}^k \lambda_{\ell}\nonumber\\
     =& ~\textstyle\cdots\nonumber\\
     =&~  \textstyle\lambda_{\iota_e} = \frac{1}{s-\iota_e}\sum_{\ell=\iota_e+1}^k \lambda_{\ell}\,. \label{FactX:continuity-eqn1}
 \end{align}

 We are now ready to prove the continuity results. Because $\hat\iota\in [\iota+1, \iota_e]$ and \eqref{FactX:continuity-eqn1}, we have
\begin{align*}
    &\textstyle\sum_{\ell=1}^{\iota} \log(\lambda_\ell)+(s-\iota)\log\left(\frac{1}{s-\iota}\sum_{\ell=\iota+1}^k \lambda_{\ell}\right)\\
    &\quad= \textstyle\sum_{\ell=1}^{\iota} \log(\lambda_\ell)+(\hat\iota-\iota)\log\!\left(\!\frac{1}{s-\iota}\sum_{\ell=\iota+1}^k \lambda_{\ell}\!\right) \!+\! (s-\hat \iota)\log\!\left(\!\frac{1}{s-\iota}\sum_{\ell=\iota+1}^k \lambda_{\ell}\!\right)\\
    &\quad= \textstyle\sum_{\ell=1}^{\iota} \log(\lambda_\ell)+\sum_{\ell=\iota+1}^{\hat \iota} \log(\lambda_\ell) + (s-\hat \iota)\log\left(\frac{1}{s-\hat \iota}\sum_{\ell=\hat \iota+1}^k \lambda_{\ell}\right)\\
    &\quad= \textstyle\sum_{\ell=1}^{\hat \iota} \log(\lambda_\ell) + (s-\hat \iota)\log\left(\frac{1}{s-\hat \iota}\sum_{\ell=\hat \iota+1}^k \lambda_{\ell}\right).
\end{align*}
Note that the above equation holds for any $\|P\|$ small enough, and with its corresponding $\hat \iota$ and at $X+P$, we have
\begin{align*}
    \textstyle\Gamma_s(X+P)=\sum_{\ell=1}^{\hat\iota} \log(\hat \lambda_\ell)+(s-\hat \iota)\log\left(\frac{1}{s-\hat\iota}\sum_{\ell=\hat \iota+1}^k \hat \lambda_{\ell}\right).
\end{align*}
Then by the continuity of eigenvalues (with respect to elements of the matrix) and $\log(\cdot)$ function, we conclude that  $\Gamma_s(X)$ is continuous on $\dom(\Gamma_s)$.
\end{itemize}

$\hfill \square$
\end{proof}

\noindent The continuity of $\Gamma_s(X)$ in $X$ together with the continuity of eigenvalues in matrix elements also implies the following.

% continuity of $f_{{\tiny\mbox{DDFact}}} (x;\Upsilon)$ in $(x, \Upsilon)$. \mf{should we write this sentence more formally, saying where the continuity holds in each case?}\zc{modified, please check}
\begin{cor}\label{Factx:continuity}
$f_{{\tiny\mbox{DDFact}}} (x;\Upsilon)$ is continuous in $(x, \Upsilon)$ on $\dom\left(f_{{\tiny\mbox{DDFact}}}; \Upsilon\right)_+$\,.
\end{cor}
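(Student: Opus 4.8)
The plan is to derive Corollary \ref{Factx:continuity} directly from Lemma \ref{FactX:continuity}, exactly as the surrounding text suggests. The key observation is that $f_{{\tiny\mbox{DDFact}}}(x;\Upsilon) = \Gamma_s\!\left(F_{{\tiny\mbox{DDFact}}}(x;\Upsilon)\right) - \sum_{i=1}^n x_i \log\gamma_i$, so it suffices to show that each of the two summands is continuous in $(x,\Upsilon)$ on $\dom\left(f_{{\tiny\mbox{DDFact}}};\Upsilon\right)_+ \times \mathbb{R}^n_{++}$. The second term $-\sum_{i=1}^n x_i\log\gamma_i$ is manifestly continuous in $(x,\Upsilon)$ on $\mathbb{R}^n \times \mathbb{R}^n_{++}$, since $\log$ is continuous on $\mathbb{R}_{++}$ and products/sums of continuous functions are continuous. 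So the whole matter reduces to the first term.

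For the first term, I would argue by composition. The map $(x,\Upsilon) \mapsto F_{{\tiny\mbox{DDFact}}}(x;\Upsilon) = \sum_{i=1}^n \gamma_i x_i F_{i\cdot}^\top F_{i\cdot} = F^\top \Diag(\sqrt{\Upsilon})\,C\,\Diag(\sqrt{\Upsilon})\,F$ is a polynomial (hence continuous) map from $\mathbb{R}^n \times \mathbb{R}^n_{++}$ into $\mathbb{S}^k$. Whenever $(x,\Upsilon)$ is such that $x \in \dom\left(f_{{\tiny\mbox{DDFact}}};\Upsilon\right)_+$, the image $F_{{\tiny\mbox{DDFact}}}(x;\Upsilon)$ lies in $\dom(\Gamma_s)$. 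By Lemma \ref{FactX:continuity}, $\Gamma_s$ is continuous on $\dom(\Gamma_s)$; composing a continuous map into $\dom(\Gamma_s)$ with the continuous function $\Gamma_s$ yields a function continuous at every point $(x,\Upsilon)$ with $x\in\dom\left(f_{{\tiny\mbox{DDFact}}};\Upsilon\right)_+$. The phrase in the preamble about ``continuity of eigenvalues in matrix elements'' is really just the standard fact underpinning Lemma \ref{FactX:continuity} itself (the eigenvalues $\lambda_\ell(\cdot)$ depend continuously on the matrix entries), so strictly speaking one can also phrase the argument as: $\Gamma_s(F_{{\tiny\mbox{DDFact}}}(x;\Upsilon)) = \phi_s\!\left(\lambda_1(F_{{\tiny\mbox{DDFact}}}(x;\Upsilon)),\ldots,\lambda_k(F_{{\tiny\mbox{DDFact}}}(x;\Upsilon))\right)$ is a composition of three continuous maps — the polynomial matrix-valued map, the eigenvalue map, and (by the proof of Lemma \ref{FactX:continuity}) the function $\phi_s$ on the relevant region.

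One subtle point worth flagging as the ``main obstacle'': Lemma \ref{FactX:continuity} asserts continuity of $\Gamma_s$ \emph{relative to its domain} $\dom(\Gamma_s) = \{X : X\succeq 0,\ \rank(X)\ge s\}$, not continuity on an open set. Accordingly, the continuity of $f_{{\tiny\mbox{DDFact}}}$ we obtain is continuity \emph{relative to} $\dom\left(f_{{\tiny\mbox{DDFact}}};\Upsilon\right)_+ \times \mathbb{R}^n_{++}$, which is exactly what the corollary claims. The one thing to be careful about is that the preimage under a continuous map of the (relatively closed, non-open) set $\dom(\Gamma_s)$ is handled correctly: if $(x_m,\Upsilon_m) \to (x,\Upsilon)$ within $\dom\left(f_{{\tiny\mbox{DDFact}}};\Upsilon\right)_+ \times \mathbb{R}^n_{++}$, then $F_{{\tiny\mbox{DDFact}}}(x_m;\Upsilon_m) \to F_{{\tiny\mbox{DDFact}}}(x;\Upsilon)$ with all terms in $\dom(\Gamma_s)$, so Lemma \ref{FactX:continuity} applies verbatim to give $\Gamma_s(F_{{\tiny\mbox{DDFact}}}(x_m;\Upsilon_m)) \to \Gamma_s(F_{{\tiny\mbox{DDFact}}}(x;\Upsilon))$. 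Combining with continuity of the linear term finishes the proof. This is genuinely routine given Lemma \ref{FactX:continuity}, so I would keep the write-up to a few lines.
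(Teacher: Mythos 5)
Your proposal is correct and matches the paper's (implicit) argument exactly: the paper offers no separate proof, simply noting that the corollary follows from Lemma \ref{FactX:continuity} together with the continuity of eigenvalues in the matrix entries, which is precisely the composition argument you spell out. Your additional remark about continuity being understood \emph{relative to} the (non-open) domain is a correct and worthwhile clarification, but it does not change the route.
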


\noindent Utilizing the above results, we can characterize the directional derivative of $\Gamma_s(X)$ and further characterize the directional derivative of $f_{{\tiny\mbox{DDFact}}} (x;\Upsilon)$.
\begin{prop}\label{FactX:dirg}
    For $ X\in \dom\left(\Gamma_s\right)$, let $D\in \mathbb{S}^n$ be such that $X+D\in \dom\left(\Gamma_s\right)$; then the directional derivative of $\Gamma_s(X)$ at $X$ in the direction $D$, denoted  $\Gamma'_s(X;D)$, exists and
\begin{align*}
    \Gamma'_s(X;D)=\inf_{G\in \partial \Gamma_s( X)} \Trace(G^\top D).
\end{align*}
\end{prop}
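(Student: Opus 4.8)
The plan is to reduce the statement to the one-dimensional behaviour of $\Gamma_s$ along the segment $[X,X+D]$ and then invoke the standard concave-analysis characterization of one-sided directional derivatives; the only nonroutine point is that $X$ may sit on the (relative) boundary of $\dom(\Gamma_s)$, so some care is needed to guarantee existence and finiteness of $\Gamma'_s(X;D)$ and to legitimize the use of the classical formula there.

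First I would record the structural facts. The set $\dom(\Gamma_s)=\{X\succeq 0:\rank(X)\ge s\}$ is convex: for $X_1,X_2\succeq 0$ one has $\ker(X_1+X_2)=\ker X_1\cap\ker X_2$, hence $\rank(X_1+X_2)\ge\max_i\rank(X_i)$, and the same holds for any convex combination. Consequently $X+tD\in\dom(\Gamma_s)$ for all $t\in[0,1]$, so $\psi(t):=\Gamma_s(X+tD)$ is well defined on $[0,1]$; it is concave there because $\Gamma_s$ is concave on $\dom(\Gamma_s)$ (see \cite{nikolov2015randomized} and \cite[Section~3.4.2]{FLbook}), and continuous there by Lemma~\ref{FactX:continuity}. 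Concavity makes $t\mapsto\bigl(\psi(t)-\psi(0)\bigr)/t$ nonincreasing on $(0,1]$, so $\Gamma'_s(X;D)=\lim_{t\downarrow 0}\bigl(\psi(t)-\psi(0)\bigr)/t=\sup_{t\in(0,1]}\bigl(\psi(t)-\psi(0)\bigr)/t$ exists in $(-\infty,+\infty]$. By Proposition~\ref{FactX:subg}, $\partial\Gamma_s(X)\neq\emptyset$; for any fixed $G_0\in\partial\Gamma_s(X)$ the supgradient inequality $\Gamma_s(X+tD)\le\Gamma_s(X)+t\,\Trace(G_0^\top D)$ gives $\bigl(\psi(t)-\psi(0)\bigr)/t\le\Trace(G_0^\top D)$, whence
\[
\Gamma'_s(X;D)\ \le\ \inf_{G\in\partial\Gamma_s(X)}\Trace(G^\top D)\ <\ +\infty,
\]
so $\Gamma'_s(X;D)$ is finite and the ``$\le$'' half of the claimed identity holds. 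For the reverse inequality I would invoke the classical fact that, for a concave function \emph{continuous at} $X$, the one-sided directional derivative in a feasible direction equals the support function of the supdifferential in that direction; this is precisely \cite[p.~65]{moreau1966fonctionnelles}, and its lone hypothesis is supplied by Lemma~\ref{FactX:continuity}. (Equivalently, one can argue by hand: $d\mapsto\Gamma'_s(X;d)$ is concave and positively homogeneous on the cone of feasible directions, and a separating-hyperplane argument produces a $G\in\partial\Gamma_s(X)$ with $\Trace(G^\top D)\le\Gamma'_s(X;D)$, with continuity at $X$ ruling out an improper separation.)

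The main obstacle is exactly the boundary behaviour. The domain $\dom(\Gamma_s)$ is \emph{not} closed (a limit of rank-$s$ matrices can drop rank), and $\partial\Gamma_s(X)$ is generally \emph{unbounded} when $\rank(X)<k$, since by Proposition~\ref{FactX:subg} the components of $\beta$ attached to the zero eigenvalues of $X$ may be arbitrarily large; so one cannot simply assert that the infimum over $\partial\Gamma_s(X)$ is attained or finite a priori. The hypothesis $X+D\in\dom(\Gamma_s)$ is what rescues this: writing $X=Q\Diag(\lambda)Q^\top$, every $v\in\ker X$ satisfies $v^\top(X+D)v=v^\top Dv\ge0$, so in $\Trace(G^\top D)=\sum_i\beta_i\,(Q^\top DQ)_{ii}$ the unbounded entries of $\beta$ multiply nonnegative quantities and the infimum stays $>-\infty$; the same feasibility keeps the supremum of difference quotients $<+\infty$. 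Thus the proof rests on two ingredients beyond the standard concave calculus: Lemma~\ref{FactX:continuity} (to apply Moreau's formula at a possibly-boundary point) and the feasibility of $D$ (to keep both sides finite), after which the two inequalities above close the argument.
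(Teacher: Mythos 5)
Your proposal is correct and follows essentially the same route as the paper: the paper's proof likewise rests on the concavity/finiteness of $\Gamma_s$ (via \cite[Lemma 3]{li2020best}), its continuity on $\dom(\Gamma_s)$ from Lemma~\ref{FactX:continuity}, and the formula of \cite[p.~65]{moreau1966fonctionnelles} for the directional derivative of a convex (here concave) function that is continuous at a possibly-boundary point. Your additional details — convexity of $\dom(\Gamma_s)$, monotonicity of the difference quotients, the easy inequality from the supgradient inequality, and the finiteness of the infimum despite the unboundedness of $\partial\Gamma_s(X)$ — are sound elaborations of what the paper leaves implicit.
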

\begin{proof}
   By definition, \cite[Lemma 3]{li2020best} and Lemma \ref{FactX:continuity}, $\Gamma_s(X)$ is convex, finite, and continuous in $X$ over $\dom(\Gamma_s)$. Then the conclusion follows by \cite[p. 65]{moreau1966fonctionnelles}.
   $\hfill \square$
\end{proof}

\begin{prop}\label{Factx:dirg}
For  $x\in \dom\left(f_{{\tiny\mbox{DDFact}}}; \Upsilon\right)_+$\,, let $d\in \mathbb{R}^n$ be such that $x+d\in \dom\left(f_{{\tiny\mbox{DDFact}}}; \Upsilon\right)_+$\,; then the directional derivative of $f_{{\tiny\mbox{DDFact}}} (x;\Upsilon)$ at $x$ in the direction $d$ exists, and
\[
f'_{{\tiny\mbox{DDFact}}} (x;\Upsilon;d) = \left(\Upsilon \circ \diag\left(FQ\Diag\left(\beta\right)QF^\top\right)\right)^\top d - \log(\Upsilon)^\top d,
\]
where $C=FF^\top $ is a factorization of $C$, and $Q, \beta$ are  defined in Definition \ref{def:fact}.
In particular, $FQ\Diag\left(\beta\right)QF^\top$ and thus $f'_{{\tiny\mbox{DDFact}}} (x;\Upsilon;d)$ is invariant to the choice of $F, Q$, as long as we change $\beta$ accordingly.
\end{prop}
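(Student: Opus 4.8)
The plan is to reduce the directional derivative of $f_{{\tiny\mbox{DDFact}}}(\cdot;\Upsilon)$ to that of $\Gamma_s$ by the chain rule (exploiting that $x\mapsto F_{{\tiny\mbox{DDFact}}}(x;\Upsilon)$ is linear), then invoke Proposition~\ref{FactX:dirg} to write it as an infimum over the supdifferential of $\Gamma_s$, and finally evaluate that infimum explicitly using Proposition~\ref{FactX:subg} together with the feasibility hypothesis $x+d\in\dom\left(f_{{\tiny\mbox{DDFact}}};\Upsilon\right)_+$. Set $X:=F_{{\tiny\mbox{DDFact}}}(x;\Upsilon)$ and, by linearity in the first argument, $D:=F_{{\tiny\mbox{DDFact}}}(x+d;\Upsilon)-F_{{\tiny\mbox{DDFact}}}(x;\Upsilon)=\sum_{i=1}^n\gamma_i d_i F_{i\cdot}^\top F_{i\cdot}$. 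For $t\in[0,1]$ one has $x+td=(1-t)x+t(x+d)\ge 0$ and $F_{{\tiny\mbox{DDFact}}}(x+td;\Upsilon)=(1-t)X+t(X+D)\in\dom(\Gamma_s)$, since $\dom(\Gamma_s)=\{Y:Y\succeq 0,\ \rank(Y)\ge s\}$ is convex; hence $x+td\in\dom\left(f_{{\tiny\mbox{DDFact}}};\Upsilon\right)_+$ along the whole segment. Writing $f_{{\tiny\mbox{DDFact}}}(x+td;\Upsilon)=\Gamma_s(X+tD)-\sum_i x_i\log\gamma_i-t\,\log(\Upsilon)^\top d$, dividing by $t$ and letting $t\downarrow 0$ gives, via Proposition~\ref{FactX:dirg}, that $f'_{{\tiny\mbox{DDFact}}}(x;\Upsilon;d)$ exists and equals $\Gamma'_s(X;D)-\log(\Upsilon)^\top d=\inf_{G\in\partial\Gamma_s(X)}\Trace(G^\top D)-\log(\Upsilon)^\top d$.

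The core step is to show $\inf_{G\in\partial\Gamma_s(X)}\Trace(G^\top D)=\Trace\!\left(Q\Diag(\beta)Q^\top D\right)$ with $Q,\beta$ as in Definition~\ref{def:fact}. Write $X=Q\Diag(\lambda)Q^\top$ with $\lambda_1\ge\cdots\ge\lambda_r>0=\lambda_{r+1}=\cdots=\lambda_k$, where $r=\rank(X)\ge s$. By Proposition~\ref{FactX:subg}, every $G\in\partial\Gamma_s(X)$ has the form $Q'\Diag(\beta')(Q')^\top$ for an eigenbasis $Q'$ of $X$, where the components $\beta'_i$, $i\in[1,r]$, are forced to equal the Definition~\ref{def:fact} values $\beta_i$ (in particular $\beta'_i=\beta_r$ for $i\in[\iota+1,r]$), while for $i\in[r+1,k]$ the only restriction is $\beta'_i\ge\beta_r$. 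Two observations finish it: (a) because $\lambda_\iota>\lambda_{\iota+1}$ (Lemma~\ref{Nik}) and $\lambda_r>\lambda_{r+1}=0$, the tuple $(\beta_1,\ldots,\beta_r)$ is constant on each eigenspace of $X$, so $\sum_{i=1}^r\beta_i Q'_{\cdot i}(Q'_{\cdot i})^\top$ is independent of the chosen eigenbasis and the ``range part'' $\sum_{i=1}^r\beta'_i (Q'_{\cdot i})^\top DQ'_{\cdot i}$ of $\Trace(G^\top D)$ is a fixed number; (b) for $i\in[r+1,k]$ we have $Q'_{\cdot i}\in\ker X$, i.e.\ $0=(Q'_{\cdot i})^\top XQ'_{\cdot i}=\sum_j\gamma_j x_j(FQ'_{\cdot i})_j^2$, so $(FQ'_{\cdot i})_j=0$ whenever $x_j>0$, and therefore $(Q'_{\cdot i})^\top DQ'_{\cdot i}=\sum_j\gamma_j d_j(FQ'_{\cdot i})_j^2=\sum_{j:\,x_j=0}\gamma_j d_j(FQ'_{\cdot i})_j^2\ge 0$, because feasibility forces $d_j\ge 0$ when $x_j=0$. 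Since $\beta_r>0$ and each $\beta'_i\ge\beta_r$, (b) yields $\sum_{i=r+1}^k\beta'_i (Q'_{\cdot i})^\top DQ'_{\cdot i}\ge\beta_r\Trace(P_0 D)$ for every admissible $(Q',\beta')$, where $P_0$ is the orthogonal projector onto $\ker X$, with equality at $\beta'_i\equiv\beta_r$. Hence the infimum equals $\sum_{i=1}^r\beta_i Q_{\cdot i}^\top DQ_{\cdot i}+\beta_r\Trace(P_0 D)=\Trace\!\left(Q\Diag(\beta)Q^\top D\right)$ (using $\beta_i=\beta_r$ for $i>r$), attained at $G=Q\Diag(\beta)Q^\top$. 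Finally $\Trace\!\left(Q\Diag(\beta)Q^\top D\right)=\sum_i\gamma_i d_i\,F_{i\cdot}Q\Diag(\beta)Q^\top F_{i\cdot}^\top=\sum_i\gamma_i d_i\left(FQ\Diag(\beta)Q^\top F^\top\right)_{ii}=\left(\Upsilon\circ\diag\!\left(FQ\Diag(\beta)Q^\top F^\top\right)\right)^\top d$, which with the first paragraph gives the asserted formula.

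For the invariance statements: independence of the eigenbasis $Q$ is exactly observation (a), so $Q\Diag(\beta)Q^\top$ depends only on $X=F_{{\tiny\mbox{DDFact}}}(x;\Upsilon)$. Independence of the factorization $F$ then follows from the fact that $f_{{\tiny\mbox{DDFact}}}(x;\Upsilon)$ is itself independent of the choice of $F$ (\cite[Theorem 2.2]{chen2023computing}), hence so is its directional derivative; at the matrix level, writing $FQ\Diag(\beta)Q^\top F^\top=F\,\Psi\!\left(F^\top\Diag(\Upsilon\circ x)F\right)F^\top$ with $\Psi$ the (well-defined) spectral map $\lambda\mapsto\beta$ reduces the claim to the same change-of-factorization argument as in that reference.

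I expect the evaluation of the infimum over the supdifferential to be the main obstacle: one must argue that the only genuinely free directions are those in $\ker X$, that $(Q'_{\cdot i})^\top DQ'_{\cdot i}\ge 0$ along them — which is precisely where the nonnegativity $x+d\ge 0$ built into $\dom\left(f_{{\tiny\mbox{DDFact}}};\Upsilon\right)_+$ is indispensable (it does not suffice that $F_{{\tiny\mbox{DDFact}}}(x+d;\Upsilon)\in\dom(\Gamma_s)$) — and consequently that the minimizing supgradient is the canonical $Q\Diag(\beta)Q^\top$ of Definition~\ref{def:fact}. The chain-rule reduction, the convexity of $\dom(\Gamma_s)$, and the trace bookkeeping are routine.
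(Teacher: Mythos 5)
Your proof is correct and follows essentially the same route as the paper's: reduce to the directional derivative of $\Gamma_s$ via Proposition~\ref{FactX:dirg} (using linearity of $x\mapsto F_{{\tiny\mbox{DDFact}}}(x;\Upsilon)$), then evaluate $\inf_{G\in\partial\Gamma_s(X)}\Trace(G^\top D)$ by observing that the only genuine freedom in the supdifferential of Proposition~\ref{FactX:subg} lies in the kernel components of $\beta$, and that the associated quadratic forms in $D$ are nonnegative precisely because $d_j\ge 0$ whenever $x_j=0$. The only differences are cosmetic: you derive the key orthogonality fact from $(Q'_{\cdot i})^\top X Q'_{\cdot i}=0$ rather than the paper's span argument, and you are somewhat more explicit about why the range part is independent of the eigenbasis (namely, $\beta$ is constant on each eigenspace because $\lambda_\iota>\lambda_{\iota+1}$ by Lemma~\ref{Nik} and $\lambda_r>0=\lambda_{r+1}$).
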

\begin{proof}
    By Theorem \ref{thm:fact}.ii, we have that $f_{{\tiny\mbox{DDFact}}} (x;\Upsilon)$ is concave. Then by \cite[Theorem 23.1]{rockafellar1997convex}, the directional derivative $f'_{{\tiny\mbox{DDFact}}} (x;\Upsilon;d)$ exists and
\begin{align*}
    &f'_{{\tiny\mbox{DDFact}}} (x;\Upsilon;d) \\
    &\quad=~\textstyle\lim_{t\rightarrow 0^+} \frac{f_{{\tiny\mbox{DDFact}}} (x+td;\Upsilon)-f_{{\tiny\mbox{DDFact}}} (x;\Upsilon)}{t}\\
    &\quad=~\textstyle\lim_{t\rightarrow 0^+} \frac{\Gamma_s\left(F_{{\tiny\mbox{DDFact}}}(x+td;\Upsilon)\right)-\Gamma_s\left(F_{{\tiny\mbox{DDFact}}}(x;\Upsilon)\right) + t\log(\Upsilon)^\top d}{t}\\
    &\quad=~\textstyle\lim_{t\rightarrow 0^+} \frac{\Gamma_s\left(F_{{\tiny\mbox{DDFact}}}(x;\Upsilon)+ t F_{{\tiny\mbox{DDFact}}}(d;\Upsilon)\right)-\Gamma_s\left(F_{{\tiny\mbox{DDFact}}}(x;\Upsilon)\right)+ t\log(\Upsilon)^\top d}{t}\\
&\quad=~\textstyle\Gamma'_s\left(F_{{\tiny\mbox{DDFact}}}(x;\Upsilon);F_{{\tiny\mbox{DDFact}}}(d;\Upsilon)\right)+ \log(\Upsilon)^\top d\\
    &\quad=~\inf_{G\in \partial \Gamma_s\left(F_{{\tiny\mbox{DDFact}}}(x;\Upsilon)\right)} \Trace\left(G^\top F_{{\tiny\mbox{DDFact}}}(d;\Upsilon)\right)+\log(\Upsilon)^\top d,
\end{align*}
where the last equation is due to Proposition \ref{FactX:dirg}. Let $\Theta(x, \Upsilon)$ denote the set of $(Q,\beta)$ in the characterization of $\partial \Gamma_s\left(F_{{\tiny\mbox{DDFact}}}(x;\Upsilon)\right)$, as described in Proposition \ref{FactX:subg}. In particular,
\begin{align*}
&\Theta(x, \Upsilon)=\left\{ \vphantom{\textstyle\beta_{i}=\frac{1}{\lambda_{i}}, \forall i \in[\iota], \beta_{i}=\frac{s-\iota}{\sum_{i \in[\iota+1, k]} \lambda_{i}}, \forall i \in[\iota+1, r],}
(Q, \beta): F_{{\tiny\mbox{DDFact}}}(x;\Upsilon) =Q \operatorname{Diag}(\lambda) Q^{\top}, Q \text { is orthonormal, }\right. \\
&\quad \lambda_{1} \geq \cdots \geq \lambda_{r}>\lambda_{r+1}=\cdots=\lambda_{k}=0, \\
&\quad \left.\beta \in \operatorname{conv} \left\{\beta: \beta_{i}=\textstyle\frac{1}{\lambda_{i}}, \forall i \in[\iota], \beta_{i}=\frac{s-\iota}{\sum_{i \in[\iota+1, k]} \lambda_{i}}, \forall i \in[\iota+1, r], \right.\right.\\
& \quad \left. \left. \vphantom{\textstyle\beta_{i}=\frac{1}{\lambda_{i}}, \forall i \in[\iota], \beta_{i}=\frac{s-\iota}{\sum_{i \in[\iota+1, k]} \lambda_{i}}, \forall i \in[\iota+1, r],} \beta_{i} \geq \beta_{r}, \forall i \in[r+1, k]\right\}\right\},
\end{align*}
where $\iota$ is the unique integer defined in Lemma \ref{Nik}. According to the former derivation,
\begin{align}
    &f'_{{\tiny\mbox{DDFact}}} (x;\Upsilon;d)\nonumber\\
    &~=~\inf_{G\in \partial \Gamma_s\left(F_{{\tiny\mbox{DDFact}}}(x;\Upsilon)\right)} \Trace\left(G^\top F_{{\tiny\mbox{DDFact}}}(d;\Upsilon)\right)+\log(\Upsilon)^\top d\nonumber\\
    &~=~\inf_{(Q,\beta)\in \Theta(x;\Upsilon)} \Trace\left(FQ\Diag(\beta)Q^\top F^\top \Diag(\Upsilon\circ d)\right)+\log(\Upsilon)^\top d\nonumber\\
    &~=~\inf_{(Q,\beta)\in \Theta(x;\Upsilon)} \left(\Upsilon\circ\diag\left(FQ\Diag(\beta)Q^\top F^\top\right)\right)^\top d+\log(\Upsilon)^\top d.\label{Factx:dirg-eqn1}
\end{align}
We now show that the infimum of \eqref{Factx:dirg-eqn1} is obtained by $(Q,\beta)$ characterized in Definition \ref{def:fact}. For simplicity, we let
\begin{align}\label{def:g}
\begin{array}{rll}
   g(Q,\beta; \Upsilon):= & \Upsilon\circ\diag\left(FQ\Diag(\beta)Q^\top F^\top\right);&\\
    g_j(Q, \beta; \Upsilon):= &\gamma_i  F_{i\cdot} Q\Diag(\beta)Q^{\top} F_{i\cdot}^{\top}\,, & \text{the } i^{\text{th}} \text{ element of } g(Q,\beta; \Upsilon); \\
    q_j: = & Q_{\cdot j}\,, &\text{the } j^{\text{th}} \text{ column of } Q
\end{array}
\end{align}
(where $F_{i\cdot}$ denotes the $i^{\text{th}}$ row of $F$). By the definition of $\Theta(x, \Upsilon)$, we also have that if $j_1>j_2$\,, the eigenvalues $\lambda_{j_1}, \lambda_{j_2}$ associated with $q_{j_1}, q_{j_2}$ satisfy $\lambda_{j_1}\le\lambda_{j_2}$\,.

We claim that for any $1\le i\le n, r< j\le k$ where $x_i>0$, $F_{i\cdot } q_j = 0$. First by the characterization of $Q$ in $\Theta(x, \Upsilon)$, we have that $q_{j_1}^\top q_{j_2}=0$ for all $1\le j_1\le r<j_2\le k$, because $q_{j_1}, q_{j_2}$ lie in eigenspaces corresponding to different eigenvalues. Therefore, it is enough to prove that $F_{i\cdot}$ lies in the space spanned by $\{q_j ~:~ 1\le j\le r\}$. Notice that $F_{{\tiny\mbox{DDFact}}}(x;\Upsilon) = \sum_{i=1}^n \gamma_i x_i  F_{i\cdot}^{\top}F_{i\cdot} = F^\top \Diag(\Upsilon\circ x) F$. Therefore, the column space of $F_{{\tiny\mbox{DDFact}}}(x;\Upsilon)$ is equal to the row space of $\Diag(\Upsilon\circ x)^{\frac{1}{2}} F$, which is in turn equal to the space spanned by $\{F_{i\cdot}^\top ~:~ 1\le i\le n,\, x_i >0\}$. On the other hand, $F_{{\tiny\mbox{DDFact}}}(x;\Upsilon)= Q\Diag(\lambda)Q^\top$, and thus the column space of $F_{{\tiny\mbox{DDFact}}}(x;\Upsilon)$ is equal to the row space of $\Diag(\lambda)^{\frac{1}{2}}Q^\top$, which is in turn equal to the space spanned by $\{q_j ~:~ 1\le j \le r\}$. Therefore, we have proved that
\begin{align*}
    \text{span}\{F_{i\cdot}^\top ~:~ 1\le i\le n,\, x_i >0\} = \text{span}\{q_j ~:~ 1\le j \le r\},
\end{align*}
which implies that for all $1\le i\le n, r< j\le k$ where $x_i>0$, $F_{i\cdot } q_j = 0$. With this result, we have when $x_i>0$,
\begin{align*}
    g_i(Q,\beta; \Upsilon)= &~ \gamma_i F_{i\cdot} Q\Diag(\beta)Q^{\top} F_{i\cdot}^{\top}=\textstyle\gamma_i \sum_{j=1}^r \beta_j \|F_{i\cdot} q_j\|^2,
\end{align*}
which is invariant to $(Q,\beta)\in \Theta(x;\Upsilon)$ because $\beta_j, 1\le j\le r$ are fixed and $Q$ is not contained in the right-hand side formula.

We choose some $(\hat Q, \hat\beta)$ defined in Definition \ref{def:fact}. Note that the choice of $(\hat Q,  \hat\beta)$ is not unique. Then we can write the directional derivative as
\begin{align*}
    &f'_{{\tiny\mbox{DDFact}}} (x;\Upsilon;d)\\
    &=\inf_{(Q, \beta)\in \Theta(x;\Upsilon)} \textstyle\sum_{x_i >0 } g_i(Q, \beta; \Upsilon) d_i +\sum_{x_i = 0 } g_i(Q, \beta; \Upsilon) d_i+\log(\Upsilon)^\top d\\
    &=  {\textstyle\sum_{x_i >0 }} g_i\left(\hat Q, \hat\beta; \Upsilon\right) d_i + \inf_{(Q,\beta)\in \Theta(x;\Upsilon)} {\textstyle\sum_{x_i = 0 } \gamma_i \sum_{j=1}^n \beta_j \|F_{i\cdot} q_j\|^2 d_i+\log(\Upsilon)^\top d}\\
    &= {\textstyle\sum_{x_i >0 }} g_i\left(\hat Q, \hat\beta; \Upsilon\right) d_i + \inf_{(Q,\beta)\in \Theta(x;\Upsilon)} \textstyle\sum_{j=1}^n \beta_j \sum_{x_i = 0 }\gamma_i    \|F_{i\cdot} q_j\|^2 d_i+\log(\Upsilon)^\top d.
\end{align*}
Note that if $x_i=0$, we must have $d_i\ge 0$ to make $x+d\in \dom\left(f_{{\tiny\mbox{DDFact}}}; \Upsilon\right)_+$\,. Therefore, for each $r < j\le k$, $\sum_{i: x_i = 0 } \gamma_i \|F_{i\cdot} q_j\| d_i\ge 0$, and the infimum is achieved if and only if each $\beta_j\,, r < j\le k$ takes the minimum value in $\Theta(x; \Upsilon)$, which is easy to see that is just the value in Definition \ref{def:fact}. In particular, $(\hat Q, \hat\beta)$ is such a choice. Specifically, we have
\begin{align*}
    &f'_{{\tiny\mbox{DDFact}}} (x;\Upsilon;d)\\
    &\quad=\textstyle\sum_{x_i >0 } g_i\left(\hat Q, \hat\beta; \Upsilon\right) d_i +  \gamma_i\sum_{j=1}^n \hat \beta_j \sum_{x_i = 0 }    \|F_{i\cdot} \hat q_j\|^2 d_i+\log(\Upsilon)^\top d\\
    &\quad=\textstyle\sum_{ x_i >0 } g_i\left(\hat Q,\hat\beta; \Upsilon\right) d_i +  \gamma_i\sum_{x_i = 0 } g_i\left(\hat Q, \hat \beta; \Upsilon\right) d_i+\log(\Upsilon)^\top d\\
    &\quad=  g_i\left(\hat Q, \hat\beta; \Upsilon\right)^\top d+\log(\Upsilon)^\top d.
\end{align*}
Note that $\hat Q$ can be any $Q$ defined in Definition \ref{def:fact}, and the value of $g(Q,\beta; \Upsilon)$ is invariant as long as we change $\hat \beta$ accordingly. The invariance relative to $F$ is due to the invariance of $f_{{\tiny\mbox{DDFact}}} (x;\Upsilon)$ relative to $F$ (see \cite[Theorem 2.2]{chen2023computing}). $\hfill \square$
\end{proof}

With the characterization of directional derivative in Proposition \ref{Factx:dirg}, we can prove the general differentiability with respect to $\dom\left(f_{{\tiny\mbox{DDFact}}}; \Upsilon\right)_+$ as defined in Definition \ref{def:generaldiff}.
\begin{proof}[Theorem \ref{thm:fact}.iii,iv,v]
\begin{itemize}
    \item[\ref{thm:fact}.iii:]
    By Proposition \ref{Factx:dirg}, let $g_x(x;\Upsilon):= \Upsilon\circ \diag\left(FQ\Diag(\beta)QF^\top\right)+\log(\Upsilon)$ for any $(Q,\beta)$ defined in Definition \ref{def:fact}. Proposition \ref{Factx:dirg} shows that $(x;\Upsilon)$ is invariant to the choice of $(Q,\beta)$ and $F$. Then the directional derivative of $f_{{\tiny\mbox{DDFact}}}(x;\Upsilon)$ with respect to $x\in \dom\left(f_{{\tiny\mbox{DDFact}}}; \Upsilon\right)_+$ and feasible direction $d\in \mathbb{R}^n$ such that $x+d\in \dom\left(f_{{\tiny\mbox{DDFact}}}; \Upsilon\right)_+$ is $g_x(x;\Upsilon)^\top d$.

    We first demonstrate two preliminary results:
    \begin{itemize}
        \item[(a)] Given $x\in \dom\left(f_{{\tiny\mbox{DDFact}}}; \Upsilon\right)_+$, we define the neighbourhood of $x$ with radius $r$ with respect to $\dom\left(f_{{\tiny\mbox{DDFact}}}; \Upsilon\right)_+$ as
        \begin{align*}
           \mathcal{N}_r(x) := \left\{y ~:~ \|y-x\|\le r, y\in  \dom\left(f_{{\tiny\mbox{DDFact}}}; \Upsilon\right)_+\right\}.
        \end{align*}
        We claim that for $r$ small enough, $\mathcal{N}_r(x)$ is a compact set. Recall that:
        \begin{align*}
            &\dom\left(\Gamma_s\right):=  \left\{X : X\succeq 0, \rank(X)\ge s \right\}, \text{ and }\\
            &\dom\left(f_{{\tiny\mbox{DDFact}}}; \Upsilon\right)_+:=  \left\{x : x\ge 0, F_{{\tiny\mbox{DDFact}}}(x;\Upsilon)\in \dom \left(\Gamma_s\right)\right\}.
        \end{align*}
        By the continuity of eigenvalues, there is some small enough $\tilde{r}>0$ such that when $r\le \tilde{r}$, $F_{{\tiny\mbox{DDFact}}}(y;\Upsilon)$ has at least the same number of nonzero eigenvalues as $F_{{\tiny\mbox{DDFact}}}(x;\Upsilon)$, and so $\rank\left(F_{{\tiny\mbox{DDFact}}}(y;\Upsilon)\right)\ge s$. Moreover, note that the set $\{x~:~ F_{{\tiny\mbox{DDFact}}}(x;\Upsilon)\succeq 0\}$ and the non-negative cone $\mathbb{R}_+^n=\{x~:~ x\ge 0\}$ are  closed. So  $\mathcal{N}_r(x)$ can be seen as the intersection of  $\{x~:~ F_{{\tiny\mbox{DDFact}}}(x;\Upsilon)\succeq 0\}$, $\mathbb{R}_+^n$,  and the sphere $\{y ~:~ \|y-x\|\le r\}$, thus is closed and bounded, and thus compact.
        Furthermore, we have shown in Corollary \ref{Factx:continuity} that $f_{{\tiny\mbox{DDFact}}}(x;\Upsilon)$ is continuous over $\dom\left(f_{{\tiny\mbox{DDFact}}}; \Upsilon\right)_+$, thus uniform continuous over  $\mathcal{N}_r(x)$ for $r\le \tilde{r}$.

        \item[(b)] Given $x\in \dom\left(f_{{\tiny\mbox{DDFact}}}; \Upsilon\right)_+$, we define the circle of  $x$ with radius $r$ with respect to $\dom\left(f_{{\tiny\mbox{DDFact}}}; \Upsilon\right)_+$ as
        \begin{align*}
            \mathcal{C}_r(x) := \left\{y~:~ \|y-x\|= r, y\in  \dom\left(f_{{\tiny\mbox{DDFact}}}; \Upsilon\right)_+\right\}.
        \end{align*}
        With similar logic to the above, when $r\le \tilde{r}$, $\mathcal{C}_{r}(x)$ is closed and bounded. Then by Heine-Borel Theorem, for any $\epsilon>0$, there exists a finite set $F\subset \mathcal{C}_{\tilde{r}}(x)$ such that for any $y\in \mathcal{C}_{\tilde{r}}(x)$, there exists $u\in F$ such that $\|y-u\|<\epsilon$.
    \end{itemize}
    \smallskip
    Now we are ready to establish generalized differentiability of $f_{{\tiny\mbox{DDFact}}}(x;\Upsilon)$ with respect to $\dom\left(f_{{\tiny\mbox{DDFact}}}; \Upsilon\right)_+$\,. In particular, we want to demonstrate that for any $\epsilon>0$, there exists some $\delta>0$ such that whenever $y\in \dom\left(f_{{\tiny\mbox{DDFact}}}; \Upsilon\right)_+$ and $\|y-x\|< \delta$, we have
    \begin{align*}
        \left|f_{{\tiny\mbox{DDFact}}}(y;\Upsilon) - f_{{\tiny\mbox{DDFact}}}(x;\Upsilon)- g_x(x;\Upsilon)^\top (y-x)\right| < \epsilon.
    \end{align*}

    We will assume that $g_x(x;\Upsilon)\neq 0$, because the case where $g_x(x;\Upsilon)= 0$ is implied by the continuity of $f_{{\tiny\mbox{DDFact}}}(x;\Upsilon)$ (see Corollary \ref{Factx:continuity}). We have the following four facts:
    \begin{itemize}
        \item [(1)] from (a), $f_{{\tiny\mbox{DDFact}}}(x;\Upsilon)$ is uniformly continuous on $\mathcal{N}_{\tilde{r}}(x)$. Then given $\epsilon>0$, there is some $\delta_1 > 0$ such that for any $x_1, x_2\in \mathcal{N}_{\tilde{r}}(x)$ such that $\|x_1-x_2\| < \frac{\delta_1}{\|g_x(x;\Upsilon)\|}$, we have $|f_{{\tiny\mbox{DDFact}}}(x_1;\Upsilon)-f_{{\tiny\mbox{DDFact}}}(x_2;\Upsilon)|<\frac{\epsilon}{3}$.
        \item[(2)] given $\delta_1>0$, by (b), there is some finite set $F\subset \mathcal{C}_{\tilde{r}}(x)$ such that for every $y\in \mathcal{C}_{\tilde{r}}(x)$, there exists $u\in F$ such that $\|y-u\|< \frac{\min\{\epsilon, \delta_1\}}{3\cdot \|g_x(x;\Upsilon)\|}$.
        \item[(3)] because of the finiteness of $F$ and the existence of the directional derivative in direction $u-x$, $\forall\, u\in F$, given $\epsilon>0$, there exists some $\delta_2\le 1$ such that for any $u\in F$, when $t< \delta_2$, we have
        \begin{align*}
            \textstyle\left|f_{{\tiny\mbox{DDFact}}}(x+t(u-x);\Upsilon) - f_{{\tiny\mbox{DDFact}}}(x;\Upsilon)-t g_x(x;\Upsilon)^\top (u-x)\right|< \frac{\epsilon}{3}.
        \end{align*}
        Note that $t< \delta_2$ is equivalent to that $ t\cdot \|u-x\| < \delta_3:= \delta_2\cdot \tilde{r}$.
        \item[(4)] for every $y\in \mathcal{N}_{\tilde{r}}(x)$, we have that $x+\frac{y-x}{\|y-x\|}\cdot \tilde{r}\in \mathcal{C}_{\tilde{r}}(x)$. By (2), there is some $u\in F$ such that
        \[
        \textstyle\left\|x+\frac{y-x}{\|y-x\|}\cdot \tilde{r}-u\right\| < \frac{\min\{\epsilon, \delta_1\}}{3\cdot \|g_x(x;\Upsilon)\|},
        \]
        and thus
        \begin{align*}
            \textstyle\left\|y-\left(x+ \frac{u-x}{\tilde{r}}\cdot \|y-x\|\right)\right\|
            =\textstyle\frac{\|y-x\|}{\tilde{r}} \cdot  \left\|x+\frac{y-x}{\|y-x\|}\cdot \tilde{r}-u\right\|
            < \textstyle\frac{\min\{\epsilon, \delta_1\}}{3\cdot \|g_x(x;\Upsilon)\|}.
        \end{align*}
    \end{itemize}

    From (1--4), given $\epsilon>0$, there exists $\delta_3 >0$ and a finite set $F\subset \mathcal{C}_{\tilde{r}}(x)$ such that for any $y\in \dom\left(f_{{\tiny\mbox{DDFact}}}; \Upsilon\right)_+$ and $\|y-x\|< \delta_3$, there exists some $u\in F$ such that
    \begin{align*}
         &\left|f_{{\tiny\mbox{DDFact}}}(y;\Upsilon) - f_{{\tiny\mbox{DDFact}}}(x;\Upsilon)- g_x(x;\Upsilon)^\top (y-x)\right|\\
         &\quad\le  \left|f_{{\tiny\mbox{DDFact}}}(y;\Upsilon) - f_{{\tiny\mbox{DDFact}}}(\hat y;\Upsilon)\right| \\
         &~~\qquad + \left|f_{{\tiny\mbox{DDFact}}}(\hat y
         ;\Upsilon) - f_{{\tiny\mbox{DDFact}}}(x;\Upsilon) - g_x(x;\Upsilon)^\top (\hat y-x)\right|\\
        &~~~~~~\qquad+\left| g_x(x;\Upsilon)^\top(y-\hat y)\right|\\
         &\quad < \frac{\epsilon}{3}+\frac{\epsilon}{3}+ \|g_x(x;\Upsilon)\| \,  \frac{\min\{\epsilon, \delta_1\}}{3  \|g_x(x;\Upsilon)\|}
         ~ < ~ \epsilon,
    \end{align*}
    where $\hat y = x+ \frac{u-x}{\tilde{r}}\, \|y-x\|$. Finally, the invariance of $g_x(x;\Upsilon)$ to $F, Q$ as long as we change $\beta$ accordingly follows from Proposition \ref{Factx:dirg}.
    \item[\ref{thm:fact}.iv:] For the first part, note that $F_{{\tiny\mbox{DDFact}}}(x;\Upsilon) = \sum_{i=1}^n \gamma_i x_i F_{i\cdot}^\top F_{i\cdot}$, and for every $x\in \dom\left(f_{{\tiny\mbox{DDFact}}}; \Upsilon\right)_+$ and $\Upsilon\in \mathbb{R}^n_{++}$, $F_{{\tiny\mbox{DDFact}}}(x;\Upsilon) \in \dom(\Gamma_s)$, and is thus well defined. On the other hand, by switching the value of $x$ and $\Upsilon$, we find that $F_{{\tiny\mbox{DDFact}}}(\Upsilon;x) = F_{{\tiny\mbox{DDFact}}}(x;\Upsilon)$. We can use the same method which we used to derive the generalized gradient with respect to $x$ to derive the generalized gradient with respect to $\Upsilon$. This means that $f_{{\tiny\mbox{DDFact}}}(x;\Upsilon)$ is generalized differentiable at $\Upsilon$ with generalized gradient
    \begin{align*}
    g_\Upsilon(x;\Upsilon)=&~x\circ\diag\left(F Q \Diag\left( \beta\right) Q ^\top F^\top\right) - \Diag(\Upsilon)^{-1}x,
    \end{align*}
    where $C=FF^\top$ is a factorization of $C$ and $(Q, \beta)$ are defined in Definition \ref{def:fact}. In particular, $g_\Upsilon(x;\Upsilon)$ is invariant to different choices of $F, Q$ and thus well defined. Moreover, because $\Upsilon \in \mathbb{R}^n_{++}$ lies in the interior of $\mathbb{R}^n_{++}$\,, the generalized differentiability reduces to differentiability. Moreover, the invariance of $g_\Upsilon(x;\Upsilon)$ to $F, Q$ as long as we change $\beta$ accordingly follows the same logic as Theorem \ref{thm:fact}.iii.
\smallskip

For the second part, note that $\left.g_\Upsilon(x^*;\Upsilon)\right|_{\Upsilon=\mathbf{e}} = 0$ is equivalent to  $x^*\circ \left(g^*(Q;\beta; \mathbf{e})-\mathbf{e}\right)=0$, where
$g^*(Q;\beta;\mathbf{e}) = \diag\left(F Q \Diag\left( \beta\right) Q ^\top F^\top\right)$
 as defined in \eqref{def:g}, specifically for $x^*$. This is further equivalent to $$
g^*_i(Q;\beta; \mathbf{e})=1, ~\forall x^*_i>0.$$
In the following proof, we will leverage the KKT conditions of \ref{DDFact} which we present here: for any optimal solution $x^*$ to \ref{DDFact}, there is some $\upsilon^* \in \mathbb{R}^n, \nu ^*\in \mathbb{R}^n, \pi^*\in \mathbb{R}^m$ such that
\begin{align*}
    \begin{array}{l}
 \mathbf{e}^\top x^*=s,~ Ax^*\leq b,~
 0\leq x^*\leq \mathbf{e},\\
\upsilon^*\geq 0, ~\nu^*\geq 0, ~\pi^*\geq 0,\\
    g^*(Q, \beta; \Upsilon) + \upsilon^* - \nu^*  - A^\top \pi^* - \tau^*\mathbf{e}=0,\\
    \pi^*\circ \left(b-Ax^*\right)=0, ~ \upsilon^*\circ x^*=0, ~ \nu^*\circ \left(\mathbf{e}-x^*\right)=0,
\end{array}\tag{DDFact-KKT}\label{DDFact:KKT}
\end{align*}
where $g^*(Q, \beta; \Upsilon)=\Upsilon\circ \diag\left(F Q \Diag\left( \beta\right) Q ^\top F^\top\right)$ as defined in \eqref{def:g}, specifically for $x^*$.
The existence of $\upsilon^* \in \mathbb{R}^n, \nu ^*\in \mathbb{R}^n, \tau^*\in \mathbb{R}, \pi^*\in \mathbb{R}^m$ is due to that: (1) \ref{DDFact} is a generalized differentiable convex-optimization problem; (2) Slater's condition holds because of the affine constraints describing the feasible region of \ref{DDFact}.
\smallskip

By \cite[Section 3.1]{li2020best}, when $\Upsilon =\mathbf{e}$ and there are not linear contraints $Ax\le b$, then there is a closed-form solution $(\upsilon^*, \nu^*, \tau^*)$ to \ref{DDFact:KKT} given $x^*$. Suppose that $\sigma$ is a permutation of $\{1,2, \cdots, n\}$ such that
\[
\left(g^*(Q;\beta; \mathbf{e})\right)_{\sigma(1)}\ge \left(g^*(Q;\beta; \mathbf{e})\right)_{\sigma(2)} \ge \cdots \ge \left(g^*(Q;\beta; \mathbf{e})\right)_{\sigma(n)}\,,
\]
where $\left(g^*(Q;\beta; \mathbf{e})\right)_i$ denotes the $i^{\text{th}}$ element of $g^*(Q;\beta; \mathbf{e})$. Then
\begin{align*}
    &\tau^* = \left(g^*(Q;\beta; \mathbf{e})\right)_{\sigma(s)},\\
    &\nu^*_{\sigma(i)}= \begin{cases}
    \left(g^*(Q;\beta; \mathbf{e})\right)_{\sigma(i)}-\tau^*, & \forall~ 1\le i \le s; \\
    0, & \forall~ s+1\le i \le n,
    \end{cases}\\
    & \upsilon^* = \nu^* + \tau^* \mathbf{e} - g^*(Q;\beta; \mathbf{e}).
\end{align*}

We claim that
\[\textstyle
\sum_{i\in \{1,2,\ldots,n\}}
x^*_{\sigma(i)} \left(g^*(Q;\beta; \mathbf{e})\right)_{\sigma(i)}= \sum_{i\in  \{1,2,\ldots,s\}}
\left(g^*(Q;\beta; \mathbf{e})\right)_{\sigma(i)}=s.
\]
In fact, by \ref{DDFact:KKT}, we have
\begin{align*}
    0= &~x^*\circ \left(g^*(Q;\beta; \mathbf{e})+\upsilon^*-\nu^* -\tau^*\mathbf{e}\right)\\
    =& ~x^* \circ g^*(Q;\beta; \mathbf{e}) + x^* \circ \upsilon^* - x^* \circ \nu^* - \tau^* x^*\\
    =& ~x^* \circ g^*(Q;\beta; \mathbf{e})  - x^* \circ \nu^* - \tau^* x^*\\
 = & ~x^* \circ g^*(Q;\beta; \mathbf{e})  - \nu^* - \tau^* x^* ,
\end{align*}
and further
\begin{align*}
    0= &~\mathbf{e}^\top \left(x^* \circ g^*(Q;\beta; \mathbf{e})  - \nu^* - \tau^* x^*\right)\\
    =&~\textstyle\sum_{i\in \{1,2,\ldots,n\}}
x^*_{\sigma(i)} \left(g^*(Q;\beta; \mathbf{e})\right)_{\sigma(i)} + \sum_{i\in \{1,2,\ldots,s\}} \nu^*_{\sigma(i)} + \tau^* s\\
 =&~\textstyle\sum_{i\in \{1,2,\ldots,n\}}
x^*_{\sigma(i)} \left(g^*(Q;\beta; \mathbf{e})\right)_{\sigma(i)} - \sum_{i\in \{1,2,\ldots,s\}} \left(g^*(Q;\beta; \mathbf{e})\right)_{\sigma(i)}.
\end{align*}
On the other hand, by \cite{chen2023computing}, the duality gap of \ref{DDFact} is $\mathbf{e}^\top \nu^* + \tau^* s -s=0$ and thus
\begin{align}
    \sum_{i\in \{1,2,\ldots,n\}}
x^*_{\sigma(i)} \left(g^*(Q;\beta; \mathbf{e})\right)_{\sigma(i)}= \sum_{i\in  \{1,2,\ldots,s\}}
\left(g^*(Q;\beta; \mathbf{e})\right)_{\sigma(i)}=s.\label{thm:Fact-4-eqn1}
\end{align}
Furthermore, we claim that if $x_{\sigma(i)}^*=1$, then $g_{\sigma(i)}(x^*)\le 1$. Note that by the proof of Proposition \ref{Factx:dirg}, letting  $q_j$ be the $j^{\text{th}}$ column of $Q$, we have
\begin{align}
    \left(g(Q;\beta; \mathbf{e})\right)_{\sigma(i)} = &~F_{\sigma(i)\cdot} Q \Diag\left( \beta\right) Q ^\top F_{\sigma(i)\cdot}^{\top}\nonumber\\
    =&  ~\textstyle\sum_{j=1}^r \beta_j F_{\sigma(i)\cdot} q_j q_j^\top F_{\sigma(i)\cdot}\nonumber\\
    \le & ~\textstyle\sum_{j=1}^r \frac{1}{\lambda_j} F_{\sigma(i)\cdot} q_j q_j^\top F_{\sigma(i)\cdot} \nonumber\\
    = & ~\textstyle F_{\sigma(i)\cdot} \left(F_{{\tiny\mbox{DDFact}}}(x;\mathbf{e})\right)^{\dagger} F_{\sigma(i)\cdot}^{\top}\nonumber\\
    = &~ \textstyle  F_{\sigma(i)\cdot} \left( F_{\sigma(i)\cdot}^\top F_{\sigma(i)\cdot}+ \sum_{j\neq \sigma(i)} x^*_j F_{j\cdot}^\top F_{j\cdot} \right)^{\dagger} F_{\sigma(i)\cdot}^{\top}\nonumber\\
   \le & ~F_{\sigma(i)\cdot} \left( F_{\sigma(i)\cdot}^\top F_{\sigma(i)\cdot}\right)^{\dagger} F_{\sigma(i)\cdot}^{\top}
    =1,\label{thm:Fact-4-eqn2}
\end{align}
where the first inequality is due to Lemma \ref{Nik} and Definition \ref{def:fact}, and the second inequality is due to the  Sherman–Morrison formula for the Moore-Penrose inverse. \eqref{thm:Fact-4-eqn1} and \eqref{thm:Fact-4-eqn2} together imply that
\begin{align*}
    \left(g(Q;\beta; \mathbf{e})\right)_{\sigma(1)}=\cdots = \left(g(Q;\beta; \mathbf{e})\right)_{\sigma(s)}=1.
\end{align*}
Moreover, for $i> s$ such that $x^*_{\sigma(i)}>0$, we must have $\left(g(Q;\beta; \mathbf{e})\right)_{\sigma(i)} = \left(g(Q;\beta; \mathbf{e})\right)_{\sigma(s)}$, otherwise we contradict \eqref{thm:Fact-4-eqn1}, due to the non-increasingness of $\left(g(Q;\beta; \mathbf{e})\right)_{\sigma(i)}$ in $i$.

\item[\ref{thm:fact}.v:] By the generalized gradients characterized for $x$ and gradients characterized for $\psi$ in Theorem \ref{thm:fact}.iii,iv, we only need to prove the continuity of $g(Q;\beta;\Upsilon)$ as defined in \eqref{def:g} with respect to $(x,\Upsilon)$. Because of the invariance of $g(Q;\beta;\Upsilon)$ to $F, Q$ as long as we change $\beta$ accordingly, we can fix $F, Q$; then the conclusion follows from the continuity of eigenvalues in the matrix elements. $\hfill \square$

\end{itemize}
\end{proof}

\section{Algorithms}
In this section, we discuss our algorithms  for determining optimal g-scaling vectors for \ref{BQP} and \ref{linx}, as well as for the selection of \emph{good} g-scaling vectors for \ref{DDFact}.
For \ref{DDFact}, we can only aim for \emph{good}, because of the lack of a convexity result concerning the g-scaling vector for \ref{DDFact}; despite this, results presented in \S \ref{sec:num} demonstrate that, in many cases, the \ref{DDFact} bounds computed with the best g-scaling vectors obtained are the strongest that we have, demonstrating the effectiveness of such algorithms.
% Initially, we establish a sufficient condition for the differentiability of each upper bound ($z_{{\tiny\mbox{BQP}}} (\Upsilon)$, $z_{{\tiny\mbox{linx}}} (\Upsilon)$, or $z_{{\tiny\mbox{DDFact}}} (\Upsilon)$) with respect to $\Upsilon\in \mathbb{R}^n_{++}$.

For notational generality, we consider an upper-bound form for \ref{CMESP}, which encompasses \ref{BQP}, \ref{linx}, and \ref{DDFact} as particular instantiations. Specifically, we define a general upper bound for \ref{CMESP} of the form:
\begin{align}\tag{CMESP-UB} \label{CMESP-UB}
    \begin{array}{rl}
         z(\psi)~:=~ \max ~& ~f(x; \psi) \\
         \text{s.t.} ~& ~ g_i(x) \le 0, ~ \forall i = 1, 2, \cdots, m_1\,;\\
         &~ h_j(x) = 0, ~ \forall j = 1, 2, \cdots, m_2\,,\\
    \end{array}
\end{align}
where  $f: \dom(f) \rightarrow \mathbb{R}$,
$g_i:\dom(g_i) \rightarrow \mathbb{R}$, and $h_j:\dom(h_j)\rightarrow \mathbb{R}$ (with the data  $C,s, A, b$ being absorbed into these functions). We assume that $T$, the set of possible values for the parameter vector $\psi$ to be open. We also assume that $f(x;\psi)$ is convex in $x$ for each $\psi$ and \textit{continuously generalized differentiable} in $x$ and \textit{continuous differentiable} in $\psi$ on its domain. Finally, we assume that \ref{CMESP-UB} is a convex program and that its maximum is attained on the feasible set.
% We assume that the feasible set of \ref{CMESP-UB} is a compact subset of $\mathbb{R}^n$, denoted as $M$. We further assume $M$ to be convex so that \ref{CMESP-UB}  is a convex program.
% We also assume $z(\psi)$ is finite and thus attainable by the compactness of $M$.
%  \jon{We haven't addressed the technical point that the feasible region can contain points that are not in the domain of $f$. So the argument that the max is attained is trickier.} \zc{Can we assume the maximum is attainable or assume that $f(x;\psi)$ goes to infinity when approaching the boundary of the domain?}
We let $M$ denote the feasible set of \ref{CMESP-UB}, we let $M^*(\psi):=\{x\in M : f(x;\psi)= z(\psi)\}$ (the optimal $x$ given $\psi$), and we say that $\psi^*$ is optimal if $z(\psi^*) = \min_{\psi\in T} z(\psi)$.

\begin{remark}
    \ref{linx} and \ref{DDFact} can naturally be viewed as an instantiation of \ref{CMESP-UB} with $\psi:=\log \Upsilon$. For \ref{BQP}, we can view $X\in \mathbb{S}^n$ as a vector in $\mathbb{R}^{n(n+1)/2}$, therefore it can also be regarded as an instantiation of \ref{CMESP-UB} with $\psi:=\log \Upsilon$. The continuous generalized differentiability and continuous differentiability of the objective functions is established in the proofs of Theorem \ref{thm:bqp}, \ref{thm:linx}, and \ref{thm:fact}.
\end{remark}

We first focus on the cases where $f(x;\psi)$ is convex in $\psi$, which encompasses \ref{BQP} and \ref{linx} as particular cases. For such cases, $z(\psi)$ becomes a convex function in $\psi$. Our algorithm relies on the following theorem, tailored from \cite[Theorem 2.4.18]{zalinescu2002convex} to our specific context.
\begin{thm}{\cite[Theorem 2.4.18]{zalinescu2002convex}}\label{alg:cvxdifferential}
    Assume that $f(x;\psi)$ is convex in $\psi$ for every $x\in M$, then the subdifferential of $z(\psi)$ at   $\psi\in T$ is
    \begin{align*}
        \partial z(\psi) = \overline{\text{conv}} \left\{\frac{f(x;\psi)}{\partial\psi}: x\in M^*(\psi)\right\},
    \end{align*}
    where $\overline{\text{conv}}$ denotes the convex  closure. Furthermore, if $M^*(\psi)$ is a singleton, then the unique subgradient becomes the gradient of $z(\psi)$ at $\psi$.
\end{thm}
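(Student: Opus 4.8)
The plan is to obtain the formula as the specialization of the classical subdifferential rule for a pointwise supremum of convex functions (the content of \cite[Theorem 2.4.18]{zalinescu2002convex}) to the present situation, in which each slice $\psi\mapsto f(x;\psi)$ is not merely convex but continuously differentiable in $\psi$, and then to read off the singleton case from a standard fact of convex analysis. First I would record the structural facts: for each $\psi\in T$ the maximum defining $z(\psi)$ is attained, so $z(\psi)=\max_{x\in M}f(x;\psi)$ is a pointwise maximum of the functions $\psi\mapsto f(x;\psi)$, each convex on $T$; hence $z$ is convex and finite on the open set $T$, therefore locally Lipschitz and in particular continuous there, and $\partial z(\psi)$ is a nonempty compact convex set at every $\psi\in T$. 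Since $f(x;\cdot)$ is continuously differentiable, its subdifferential in $\psi$ is the singleton $\{\partial f(x;\psi)/\partial\psi\}$, so the general formula $\partial z(\psi)=\overline{\text{conv}}\,\bigcup_{x\in M^*(\psi)}\partial_\psi f(x;\psi)$ collapses to exactly the claimed expression.

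The inclusion ``$\supseteq$'' I would verify by hand, to keep things self-contained: if $x\in M^*(\psi)$ then $f(x;\psi)=z(\psi)$, and for every $\psi'\in T$ convexity and differentiability of $f(x;\cdot)$ give
\begin{align*}
z(\psi')\;\ge\; f(x;\psi')
&\;\ge\; f(x;\psi)+\Big(\tfrac{\partial f(x;\psi)}{\partial\psi}\Big)^{\!\top}(\psi'-\psi)\\
&\;=\;z(\psi)+\Big(\tfrac{\partial f(x;\psi)}{\partial\psi}\Big)^{\!\top}(\psi'-\psi),
\end{align*}
so $\partial f(x;\psi)/\partial\psi\in\partial z(\psi)$; as $\partial z(\psi)$ is closed and convex it then contains the closed convex hull of these vectors.

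For the reverse inclusion ``$\subseteq$'' I would invoke \cite[Theorem 2.4.18]{zalinescu2002convex} after checking its hypotheses in our context: the index set $M$ is compact (for each of \ref{BQP}, \ref{linx}, \ref{DDFact} it is closed and bounded, using $0\le x\le\mathbf{e}$ and, for \ref{BQP}, the lifted constraints, which bound $X$ as well), and upon extending $f(\cdot;\psi)$ by the value $-\infty$ off its domain the map $x\mapsto f(x;\psi)$ is upper semicontinuous on this compact set while $z$ is finite and continuous on $T$ --- precisely the regime in which the cited theorem yields the closed-convex-hull formula. Equivalently, and this is the substance behind it, one establishes the Danskin-type identity $z'(\psi;d)=\max_{x\in M^*(\psi)}\big(\partial f(x;\psi)/\partial\psi\big)^{\!\top}d$ for every direction $d$: the ``$\ge$'' part is immediate from $z(\psi+td)\ge f(x;\psi+td)$ for $x\in M^*(\psi)$, and the ``$\le$'' part follows by choosing $x_k\in M^*(\psi+t_kd)$ along $t_k\downarrow0$, extracting a convergent subsequence $x_k\to\bar x$ (here compactness of $M$ and the upper semicontinuity of the $-\infty$-extended $f$ force $\bar x\in M^*(\psi)$), and bounding the difference quotient $\big(z(\psi+t_kd)-z(\psi)\big)/t_k$ by $\big(\partial f(x_k;\psi+t_kd)/\partial\psi\big)^{\!\top}d\to\big(\partial f(\bar x;\psi)/\partial\psi\big)^{\!\top}d$, using convexity of the slices and the joint continuity of $\partial f/\partial\psi$ from Theorems~\ref{thm:bqp}, \ref{thm:linx}, \ref{thm:fact}. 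Then $K:=\overline{\text{conv}}\{\partial f(x;\psi)/\partial\psi:\,x\in M^*(\psi)\}$ is compact convex (closed convex hull of the continuous image of the compact set $M^*(\psi)$) with support function $d\mapsto z'(\psi;d)$, and two closed convex sets with a common support function coincide, so $\partial z(\psi)=K$. I expect the verification of the semicontinuity/compactness hypotheses --- in particular reconciling them with the fact that $f$ is real-valued only on (a boundary-laden subset of) its domain, and that for \ref{DDFact} maximizers may sit on that boundary --- to be the one genuinely delicate point; the $-\infty$-extension device above is what makes it go through.

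Finally, for the last assertion: if $M^*(\psi)=\{x^*\}$ then by the formula just proved $\partial z(\psi)=\{\partial f(x^*;\psi)/\partial\psi\}$ is a single point, and a convex function that is finite on a neighborhood of $\psi$ and whose subdifferential at $\psi$ is a singleton is (Fr\'echet) differentiable there with that point as its gradient (e.g.\ \cite[Theorem 25.1]{rockafellar1997convex}); hence $\nabla z(\psi)=\partial f(x^*;\psi)/\partial\psi$.
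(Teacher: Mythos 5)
The paper does not actually prove this statement: it is imported verbatim (``tailored from'') \cite[Theorem 2.4.18]{zalinescu2002convex}, so there is no in-paper argument to compare yours against. What you have written is a self-contained proof of the imported result via the standard Danskin--Valadier route: the easy inclusion $\supseteq$ by the gradient inequality for each slice $f(x;\cdot)$, the hard inclusion $\subseteq$ by computing the directional derivative $z'(\psi;d)=\max_{x\in M^*(\psi)}\bigl(\partial f(x;\psi)/\partial\psi\bigr)^{\top}d$ and matching support functions of two compact convex sets, and the singleton case from the fact that a finite convex function with a singleton subdifferential is differentiable. This is correct in outline and is essentially the proof that lives inside Z\u{a}linescu's theorem; what your version buys is an explicit verification, in the paper's three instantiations, of the hypotheses (compactness of $M$, which indeed holds for \ref{BQP}, \ref{linx} and \ref{DDFact}; upper semicontinuity of $f(\cdot;\psi)$; joint continuity of $\partial f/\partial\psi$, which the paper supplies in Theorems \ref{thm:bqp}, \ref{thm:linx}, \ref{thm:fact}) that the paper never checks when it invokes the citation.

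One point deserves more care than your parenthetical acknowledgment gives it. In the chain $z(\psi+t_kd)-z(\psi)\le f(x_k;\psi+t_kd)-f(x_k;\psi)\le t_k\bigl(\partial f(x_k;\psi+t_kd)/\partial\psi\bigr)^{\top}d$, the second inequality requires $f(x_k;\psi)$ to be finite, i.e.\ $x_k\in\dom f(\cdot;\psi)$ even though $x_k$ was chosen as a maximizer at $\psi+t_kd$; and your $-\infty$-extension (natural for concavity in $x$) sits uneasily with the $+\infty$-extension that convexity in $\psi$ would demand, so the gradient inequality in the $\supseteq$ step likewise presupposes $f(x;\psi')>-\infty$. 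Both issues dissolve in the paper's setting because for each of the three bounds the $x$-domain is in fact independent of $\Upsilon\in\mathbb{R}^n_{++}$ (e.g.\ $F_{{\tiny\mbox{BQP}}}=\Diag(\Upsilon)(C\circ X)\Diag(\Upsilon)+\Diag(\mathbf{e}-x)$ and the analogous linx and DDFact expressions have $\Upsilon$-independent kernels), but that observation is doing real work and should be stated rather than left implicit.
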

\begin{remark}
    \cite[Propositions 3.3.7 and 3.6.9]{FLbook} provide sufficient conditions for $M^*(\psi)$ to be a singleton for \ref{BQP} and \ref{linx}, respectively.
\end{remark}

Theorem \ref{alg:cvxdifferential} allows the calculation of the subgradient (or gradient) of $z(\psi)$ by solving \ref{CMESP-UB}. Thus, a standard subgradient algorithm can achieve convergence to an optimal $\psi^*$. However, due to the well-known sluggishness of the subgradient algorithm, we employ a BFGS-type algorithm that utilizes the subgradient (or gradient) to update the Hessian approximation.
% Assuming the smoothness, this algorithm exhibits superlinear convergence to the optimal $\psi^*$.

For cases where $f(x;\psi)$ is not necessarily convex in $\psi$,
% it is usually impossible to find an algorithm that will converge to the optimal $\psi^*$ because $z(\psi)$ is not necessarily convex in $\psi$.
we cannot aim for verified global optimality.
Nevertheless, we still use a BFGS-type algorithm, where we use $ \frac{\partial f(x;\psi)}{\partial\psi}$ for any $x\in M^*(\psi)$ to update the Hessian approximation. Under some smoothness assumption, $\frac{\partial f(x;\psi)}{\partial\psi}$ becomes the differential of $z(\psi)$, and this algorithm will converge to a stable point of $z(\psi)$. The following theorem provides a sufficient condition for the differentiability of $z(\psi)$, tailored from \cite[Proposition 2.1]{oyama2018non} to our specific context.
\begin{thm}{\cite[Proposition 2.1]{oyama2018non}}\label{alg:thm}
     We define a \emph{selection} to be a function mapping from $\psi$ to $x$ selected from $M^*(\psi)$, denoted as $x^*(\psi)$. Given $\bar \psi \in T$, if there is a selection $x^*(\psi)$ continuous at $\bar \psi$, then $z(\psi)$ is differentiable at $\bar \psi$ with
    \begin{align*}
        \frac{\partial z(\bar \psi)}{\partial \psi} = \frac{\partial f(x^*(\bar\psi);\bar \psi)}{\partial \psi}.
    \end{align*}
    In particular, if $M^*(\bar \psi)$ is a singleton, then the unique selection $x^*(\psi)$ is always continuous at $\bar \psi$.
\end{thm}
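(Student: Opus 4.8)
The plan is to prove this as a classical envelope (Danskin-type) result, exploiting that the feasible set $M$ of \ref{CMESP-UB} does not depend on $\psi$, and sandwiching $z(\psi)-z(\bar\psi)$ between two first-order estimates that agree. Write $\bar x:=x^*(\bar\psi)\in M$, so that $z(\bar\psi)=f(\bar x;\bar\psi)$. The goal is to show that, as $\psi\to\bar\psi$,
\[
z(\psi)-z(\bar\psi)=\left(\textstyle\frac{\partial f(\bar x;\bar\psi)}{\partial\psi}\right)^{\!\!\top}(\psi-\bar\psi)+o(\|\psi-\bar\psi\|),
\]
which is exactly the claimed differentiability together with the gradient formula. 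Note that neither the convexity of $f$ in $x$ nor the convexity of \ref{CMESP-UB} is needed for this part; those hypotheses enter only (indirectly) in the ``in particular'' clause.

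For the lower estimate I would use that $\bar x$ is feasible for \ref{CMESP-UB} at \emph{every} parameter value, so $z(\psi)\ge f(\bar x;\psi)$ for all $\psi\in T$, with equality at $\psi=\bar\psi$. Hence $z(\psi)-z(\bar\psi)\ge f(\bar x;\psi)-f(\bar x;\bar\psi)$, and since $f(\bar x;\cdot)$ is continuously differentiable in $\psi$ on the open set $T$, a first-order Taylor expansion of the right-hand side delivers the ``$\ge$'' half of the displayed identity.

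For the upper estimate I would play optimality of $\bar x$ at $\bar\psi$ against the moving maximizer. Fix a selection $x^*(\cdot)$ continuous at $\bar\psi$ (it exists by hypothesis). Feasibility of $x^*(\psi)$ gives $z(\bar\psi)=f(\bar x;\bar\psi)\ge f(x^*(\psi);\bar\psi)$, while $z(\psi)=f(x^*(\psi);\psi)$; subtracting, $z(\psi)-z(\bar\psi)\le f(x^*(\psi);\psi)-f(x^*(\psi);\bar\psi)$. For $\psi$ near $\bar\psi$ the segment $\{\bar\psi+t(\psi-\bar\psi):t\in[0,1]\}$ lies in $T$ and $x^*(\psi)$ stays in $\dom(f)$, so this last difference equals $\int_0^1 \big(\frac{\partial f}{\partial\psi}(x^*(\psi);\bar\psi+t(\psi-\bar\psi))\big)^{\!\top}(\psi-\bar\psi)\,dt$. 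Letting $\psi\to\bar\psi$: continuity of the selection gives $x^*(\psi)\to\bar x$, and joint continuity of the $\psi$-gradient of $f$ makes the integrand equal to $\big(\frac{\partial f(\bar x;\bar\psi)}{\partial\psi}\big)^{\!\top}(\psi-\bar\psi)+o(\|\psi-\bar\psi\|)$ uniformly in $t\in[0,1]$; integrating gives the ``$\le$'' half. Combining the two halves proves differentiability of $z$ at $\bar\psi$ with $\frac{\partial z(\bar\psi)}{\partial\psi}=\frac{\partial f(\bar x;\bar\psi)}{\partial\psi}$.

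For the ``in particular'' clause, when $M^*(\bar\psi)=\{\bar x\}$ I would show that \emph{every} selection is automatically continuous at $\bar\psi$, via a Berge-type argument: for $\psi_k\to\bar\psi$ and $x_k\in M^*(\psi_k)$, use local boundedness of the maximizers near $\bar\psi$ and closedness of $M$ to extract $x_{k_j}\to\hat x\in M$; passing to the limit in $f(x_{k_j};\psi_{k_j})\ge f(y;\psi_{k_j})$ for each fixed $y\in M$, using joint continuity of $f$, forces $\hat x\in M^*(\bar\psi)=\{\bar x\}$, i.e.\ $\hat x=\bar x$; since every convergent subsequence has the same limit, the whole sequence converges to $\bar x$, so the selection is continuous at $\bar\psi$ and the first part applies. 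The hard part is the uniformity step in the upper estimate: one must control $\frac{\partial f}{\partial\psi}$ simultaneously along the whole segment \emph{and} at the drifting point $x^*(\psi)$, which genuinely needs joint continuous differentiability of $f$ in $(x,\psi)$ together with the guarantee that $x^*(\psi)$ and that segment remain in a compact subset of $\dom(f)$ for $\psi$ near $\bar\psi$ — this is where openness of $T$, the attainment assumption, and continuity of the selection are used; the analogous technical point for the singleton clause is the local boundedness of $M^*(\cdot)$, which for \ref{BQP}, \ref{linx}, and \ref{DDFact} follows from the explicit description of their feasible sets.
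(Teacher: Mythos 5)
Your proof is correct in substance, but note that the paper itself offers no proof of this statement: it is imported (``tailored'') from \cite[Proposition 2.1]{oyama2018non}, so there is no internal argument to compare against. What you have written is the classical two-sided Danskin/envelope sandwich --- feasibility of the frozen maximizer $\bar x$ at every $\psi$ for the lower estimate, feasibility of the drifting maximizer $x^*(\psi)$ at $\bar\psi$ for the upper estimate, and an integral/mean-value representation to close the gap --- and this is essentially the standard route to such results, so it is a legitimate self-contained replacement for the citation. Two caveats are worth making explicit. First, your upper estimate genuinely requires that $\frac{\partial f}{\partial\psi}$ be continuous \emph{jointly} in $(x,\psi)$ near $(\bar x,\bar\psi)$, which is slightly stronger than the paper's literal standing hypothesis (``continuously differentiable in $\psi$'' for each fixed $x$); you flag this correctly, and it does hold for the three instantiations (the \ref{BQP} and \ref{linx} objectives are analytic in $(x,\Upsilon)$, and Theorem \ref{thm:fact}.v supplies exactly this joint continuity for \ref{DDFact}), but in the abstract \ref{CMESP-UB} setting it should be added as an assumption for the statement to be airtight. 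Second, the ``in particular'' clause via the Berge-type argument needs local boundedness of $M^*(\cdot)$ near $\bar\psi$ and joint continuity of $f$ itself, neither of which follows from the abstract hypotheses alone; again you flag both, and both hold here because the feasible sets of \ref{BQP}, \ref{linx}, and \ref{DDFact} are compact and the objectives are jointly continuous (Corollary \ref{Factx:continuity} for \ref{DDFact}). Your observation that convexity of $f$ in $x$ plays no role in the differentiability argument is also accurate. In short: no mathematical gap, only the (correctly identified) need to upgrade the standing hypotheses of \ref{CMESP-UB} from separate to joint continuity if one wants the abstract theorem, rather than its three concrete instantiations, to stand on its own.
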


Additionally, BFGS has been shown to possess good convergence properties under non-smooth settings, e.g., locally Lipschitz and directionally differentiable (see \cite{nonsmoothBFGS}). The following theorem guarantees this property for $z_{{\tiny\mbox{DDFact}}} (\Upsilon)$, tailored from \cite[Theorem 4.1]{fiacco1990sensitivity} to our specific context.
\begin{thm}\label{alg:directdiff}
For any $\psi\in T$, $z(\psi)$ is locally Lipschitz near $\psi$ and directionlly differentiable at $\psi$ in any feasible direction $v$ with formula
\begin{align*}
    \partial z(\psi; v) = \max_{x\in M^*(\psi)} \left(\frac{\partial f(x;\alpha)}{\partial \alpha}\right)^\top v.
\end{align*}
\end{thm}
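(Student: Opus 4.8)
The plan is to read Theorem~\ref{alg:directdiff} as a Danskin-type sensitivity statement, exploiting the decisive structural feature of \ref{CMESP-UB}: the feasible set $M$ does not depend on $\psi$, so $z(\psi)=\max_{x\in M}f(x;\psi)$ is a genuine pointwise maximum over a \emph{fixed} set. In the setting of this paper $M$ is compact: for each of \ref{BQP}, \ref{linx}, \ref{DDFact} it is closed and bounded by the box-type constraints it inherits (e.g., $0\le x\le\mathbf{e}$, and for \ref{BQP} also $|X_{ij}|\le 1$, which follows from $X-xx^\top\succeq 0$). The first thing I would record is that $f$ is jointly continuous on $M\times T$: writing $|f(x;\psi)-f(x_0;\psi_0)|\le|f(x;\psi)-f(x;\psi_0)|+|f(x;\psi_0)-f(x_0;\psi_0)|$, the first term is bounded by $\|\psi-\psi_0\|$ times a local bound on $\partial f/\partial\psi$ (uniform in $x$ near $x_0$, by continuity of $\partial f/\partial\psi$), and the second by continuity of $f$ in $x$. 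Hence $M^*(\psi)$ is a nonempty compact set and the maximum on the right-hand side of the claimed formula is attained.

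For the local Lipschitz assertion, fix a compact neighborhood $N\subseteq T$ of $\psi$. Since $\partial f(x;\alpha)/\partial\alpha$ is continuous on the compact set $M\times N$, it is bounded there, say $\|\partial f(x;\alpha)/\partial\alpha\|\le L$. The mean value theorem applied to $\alpha\mapsto f(x;\alpha)$ gives $|f(x;\psi_1)-f(x;\psi_2)|\le L\|\psi_1-\psi_2\|$ for all $x\in M$ and $\psi_1,\psi_2\in N$, and taking the maximum over $x$ yields $|z(\psi_1)-z(\psi_2)|\le L\|\psi_1-\psi_2\|$. In particular $z$ is continuous, which is used below.

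For the directional derivative, fix $v$ (every direction is feasible since $T$ is open) and study $q(t):=(z(\psi+tv)-z(\psi))/t$ as $t\to 0^+$. The lower bound is immediate: for any $x\in M^*(\psi)$, $z(\psi+tv)\ge f(x;\psi+tv)$, hence $q(t)\ge(f(x;\psi+tv)-f(x;\psi))/t\to(\partial f(x;\psi)/\partial\psi)^\top v$, and taking $\liminf_{t\to 0^+}$ then the supremum over $M^*(\psi)$ gives $\liminf q(t)\ge\max_{x\in M^*(\psi)}(\partial f(x;\psi)/\partial\psi)^\top v$. For the reverse, choose $t_k\downarrow 0$ realizing $\limsup q(t)$ and $x_k\in M^*(\psi+t_kv)$; by compactness of $M$ pass to a subsequence with $x_k\to\bar x\in M$. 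Joint continuity of $f$ together with continuity of $z$ forces $f(\bar x;\psi)=\lim_k f(x_k;\psi+t_kv)=\lim_k z(\psi+t_kv)=z(\psi)$, so $\bar x\in M^*(\psi)$. Since $f(x_k;\psi)\le z(\psi)=f(\bar x;\psi)$, the mean value theorem gives, for some $\theta_k\in(0,1)$,
\begin{align*}
q(t_k)=\frac{f(x_k;\psi+t_kv)-f(\bar x;\psi)}{t_k}&\le\frac{f(x_k;\psi+t_kv)-f(x_k;\psi)}{t_k}\\
&=\left(\frac{\partial f}{\partial\psi}(x_k;\psi+\theta_k t_k v)\right)^{\top}v,
\end{align*}
and letting $k\to\infty$, continuity of $\partial f/\partial\psi$ yields $\limsup q(t)\le(\partial f(\bar x;\psi)/\partial\psi)^\top v\le\max_{x\in M^*(\psi)}(\partial f(x;\psi)/\partial\psi)^\top v$. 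The two bounds coincide, so $\lim_{t\to 0^+}q(t)$ exists and equals the stated maximum, which is precisely $\partial z(\psi;v)$.

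The main obstacle is not depth but care: one must justify compactness of $M$ (hence of $M^*(\psi)$) well enough to extract the convergent subsequence $x_k\to\bar x$, and one must pin down joint continuity of $f$ from the separate hypotheses of continuous generalized differentiability in $x$ and continuous differentiability in $\psi$ --- for \ref{DDFact} this joint continuity is exactly Corollary~\ref{Factx:continuity}, while for \ref{BQP} and \ref{linx} it follows from analyticity (Theorems~\ref{thm:bqp}.ii and \ref{thm:linx}.ii). I would also emphasize that the ``generalized'' nature of the $x$-derivative plays no role here: only continuity of $f$ in $x$ and genuine $C^1$-dependence on $\psi$ are needed, which is why the abstract statement applies uniformly to all three bounds.
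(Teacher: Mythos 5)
The paper does not actually prove Theorem~\ref{alg:directdiff}: it is stated as ``tailored from \cite[Theorem 4.1]{fiacco1990sensitivity}'' and no argument is given. Your proposal is therefore a self-contained replacement, and the route you take --- the classical Danskin/Fiacco argument (lower bound from any fixed maximizer, upper bound by extracting a convergent subsequence of maximizers at $\psi+t_k v$ and passing to the limit through the mean value theorem) --- is exactly the argument that underlies the cited result. The directional-derivative half is essentially sound: the limit point $\bar x$ has $f(\bar x;\psi)=z(\psi)$ finite, hence lies in $\dom(f;\psi)$, and the continuity of $\partial f/\partial\psi$ on the domain (Theorem~\ref{thm:fact}.v for \ref{DDFact}, analyticity for \ref{BQP} and \ref{linx}) lets you pass to the limit in $\bigl(\partial f/\partial\psi(x_k;\psi+\theta_k t_k v)\bigr)^\top v$.

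There is, however, one step that fails as written. You assert that $f$ is jointly continuous on $M\times T$ and that $\partial f(x;\alpha)/\partial\alpha$ is continuous, hence bounded, on the compact set $M\times N$, and you apply the mean value theorem to $\alpha\mapsto f(x;\alpha)$ for \emph{every} $x\in M$. But in the actual instantiations $M\not\subseteq\dom(f;\psi)$: for \ref{linx} a feasible $x$ can make $F_{{\tiny\mbox{linx}}}(x;\Upsilon)$ singular, and for \ref{DDFact} a feasible $x$ can have $\rank F_{{\tiny\mbox{DDFact}}}(x;\Upsilon)<s$; at such points $f=-\infty$ and the gradient blows up as you approach them from inside the domain, so no uniform bound $L$ on $M\times N$ exists and the Lipschitz estimate $|z(\psi_1)-z(\psi_2)|\le L\|\psi_1-\psi_2\|$ is not justified. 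The fix is standard but must be said: use upper semicontinuity of $f(\cdot;\psi)$ and of the argmax correspondence to confine $M^*(\psi')$, for $\psi'$ in a small neighborhood of $\psi$, to a compact set $K$ contained in $\dom(f;\psi)\cap M$ (possible because $z$ is finite and, e.g., Corollary~\ref{Factx:continuity} gives continuity of $f_{{\tiny\mbox{DDFact}}}$ on $\dom\left(f_{{\tiny\mbox{DDFact}}};\Upsilon\right)_+$), take the gradient bound $L$ only on $K\times N$, and estimate $z(\psi_1)-z(\psi_2)\le f(x_1;\psi_1)-f(x_1;\psi_2)$ with $x_1\in M^*(\psi_1)\subseteq K$ rather than maximizing the increment over all of $M$. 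The same localization is what makes your joint-continuity claim legitimate where you actually use it.
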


\begin{remark}
    Theorem \ref{alg:cvxdifferential}, \ref{alg:thm}, and \ref{alg:directdiff} hold based on the continuous differentiability of $f(x; \psi)$ in $\psi$ and the continuity of $f(x; \psi)$ in $x$ for \ref{CMESP-UB}. The continuously generalized differentiability of $f(x; \psi)$ in $x$ ensures good convergence behavior of  algorithms for obtaining $x^*\in M^*(\psi)$.
\end{remark}
% \jon{For the three theorems of this section, can we link these results to what you established about generalized differentiability for the objective of \ref{DDFact}? That is, does the generalized differentiability give us something here (in the context of applying BFGS for $\Upsilon$ optimization for  \ref{DDFact}) that
% we wouldn't get without it?} \zc{The above three theorems hold only when the objective of \ref{CMESP-UB} is differentiable or generalized differentiable in $(x; \psi)$. I did not write this condition explicitly in the theorems as I have assumed that for \ref{CMESP-UB}. Should I specify this condition explicitly in the theorems? \jon{Anything that you assumed for \ref{CMESP-UB} does not have to be repeated. But it is very important that we make clear (in the statements) where we employ generalized differentiability,}}

% \zc{Please have a look at the above context}
% \jon{we need a remark tying the conditions of  Theorem \ref{alg:thm} back to what we can say about smoothness of \ref{DDFact} and what that means for BFGS}

\section{Numerical results}\label{sec:num}

We experimented on benchmark instances of \hyperlink{MESP}{MESP}, using three covariance matrices
 that have been extensively used in the literature, with $n=63,90,124$ (see, e.g., \cite{KLQ,LeeConstrained,AFLW_Using,Anstreicher_BQP_entropy,Kurt_linx}).
 For testing \ref{CMESP}, we included five side constraints $a_i^\top x\leq b_i$, for $i=1,\ldots,5$, in \hyperlink{MESP}{MESP}. As there is no benchmark data for the side constraints, we have generated them randomly.  For each $n$, the  left-hand side of constraint $i$ is given by a uniformly-distributed random vector $a_i$ with integer components between $-2$ and $2$. The right-hand side of the constraints was selected so that, for
every $s$ considered in the experiment, the best known solution
%$x^*(s)$
of the instance of \hyperlink{MESP}{MESP} is violated by
at least one constraint.

% For that, each $b_i$ was selected as the $80$-th percentile of the values $a_i^\top x^*(s)-1$, for all $s$.

% \zc{Here is how I generate $b$:
% \begin{enumerate}
%     \item First, I will generate a matrix A of 5 by n with elemented randomly selected from [-2, 2], integer.
%     \item Second, I choose a range of s with middle values for each n, e.g., when n=124, s in [25, 99]. Then I will generate a constraint-free feasible solution x(s) with the heuristic for each s and in this range and calculate A*x(s). I will divide the s-range into five disjunctive subsets and each subset contains a continuous range of s. For each subset i, we calculate $b_i = \min (a'_i\cdot x(s)-1)$ and use $b_i$ as the right-hand-side of one inequality. The choice of s-range is such that constraints generated are feasible but right for most s, typically larger than the chosen s-range.
% \end{enumerate}}

For each $n$ (which refers always to a particular benchmark covariance matrix), we consider different values of $s$ defining a set of test instances of \hyperlink{MESP}{MESP} and \ref{CMESP}.
%$n=63,90,124$,
% we considered a set of test instances of \hyperlink{MESP}{MESP} and \ref{CMESP} with
% a wide range of $s$.
% $s\in[2,n-1]$, and all instances of \ref{CMESP} with $s$ in the ranges $[3,52]$, $[4,87]$, and $[11,110]$,
%  respectively.
We ran our experiments under Windows, on an Intel Xeon E5-2667 v4 @ 3.20 GHz processor equipped with 8 physical cores (16 virtual cores) and 128 GB of RAM.
We implemented our code in \texttt{Matlab} using the solvers
 \texttt{SDPT3} v. 4.0 for \ref{BQP}, and \texttt{Knitro} v. 12.4
for \ref{linx} and  \ref{DDFact}. {When instantiating the \ref{DDFact} bound, the selection of $F$ is made
as $F := U\Lambda^{1/2}$, where $C=U\Lambda U^\top$ represents a spectral decomposition of $C$ omitting eigenvalues of zero,  so that $U \in \mathbb{R}^{n\times \rank(C)}$ and diagonal matrix $\Lambda \in \mathbb{R}^{\rank(C)\times \rank(C)}$. This choice  gives the number of columns of $F$ equal to the rank of $C$, so that  Remark \ref{rem:fullrank} applies.

In all of our experiments, we
compare g-scaling only with \emph{optimal} o-scaling for
the linx and BQP bounds and no scaling for the factorization bound
(which is invariant under o-scaling). We never work with unscaled linx and BQP bounds,
as these bounds are not generally useful; in branch-and-bound
approaches (see \cite{Kurt_linx}, for example), a good scaling parameter is used on every subproblem.

We  optimized scaling vectors $\Upsilon$ using a BFGS algorithm, and o-scaling parameters $\gamma$ using Newton's method.  In all of our experiments we set \texttt{Knitro} parameters\footnote{see \url{https://www.artelys.com/docs/knitro/2_userGuide.html}, for details} as follows:
\verb;convex;~$=1$ (true),
\verb;gradopt;~$=1$ (we provided exact gradients),
\verb;maxit;~$=1000$.  We set
\verb;opttol;~$=10^{-10}$, aiming to satisfy the  KKT optimality conditions to a very tight tolerance. We set
 \verb;xtol;~$=10^{-15}$ (relative tolerance for lack of progress in the solution point) and
 \verb;feastol;~$=10^{-10}$ (relative tolerance for the feasibility error), aiming for
 the best solutions that we could reasonably find, to be sure that we are seeing the best possible that can be achieved.
 These are not meant to be practical settings for performance, so we do not report running times.
 In our first set of experiments, we set the
 the \texttt{Knitro} parameter
 \verb;algorithm;~$=3$ to use an active-set method. Besides solving the relaxations to get upper bounds for our test instances of
\hyperlink{MESP}{MESP} and \ref{CMESP}, we  compute lower bounds with a heuristic of \cite[Section 4]{LeeConstrained} and then a local search (see \cite[Section 4]{KLQ}).

%\subsection{Experiments with MESP}

In Figure \ref{fig:unconst}, we show the impact of  g-scaling on the  \ref{linx} bound for \hyperlink{MESP}{MESP} on the three benchmark covariance matrices. For  the $n=63$ matrix, we also show the impact of  g-scaling on the \ref{BQP} and complementary \ref{BQP} bounds (recall that the \ref{linx} bound is invariant under complementation). The \ref{DDFact} and complementary \ref{DDFact} bounds are only considered in the experiments for \ref{CMESP}, as the g-scaling methodology was only able to improve these bounds when side constraints were added to \hyperlink{MESP}{MESP}.
The plots on the left in Figure \ref{fig:unconst} present the ``integrality gap decrease ratios'', given by the difference between the integrality gaps using o-scaling and the integrality gaps using g-scaling, divided by the  integrality gaps using o-scaling.
%obtained when we compute the bounds using the g-scaling instead of the o-scaling.
The integrality gaps are given by the difference between the upper bounds computed with the relaxations and
     lower bounds given by heuristic solutions. We see that larger  $n$ leads to larger  maximum ratios. We also see that the g-scaling methodology is effective in reducing all bounds evaluated, especially the linx bound. Even for the most difficult instances, with intermediate values of $s$, we have some improvement on the bounds, which can be  effective in the branch-and-bound context where the bounds would ultimately be applied. The plots on the right in Figure \ref{fig:unconst} present the integrality gaps, and we see that even when the integrality gaps given by the o-scaling are less than 1,  g-scaling can reduce them.

\begin{figure*}
        \centering
        \begin{subfigure}[b]{0.496\textwidth}
            \centering
            \includegraphics[height=5.5cm,width=1.05\textwidth]{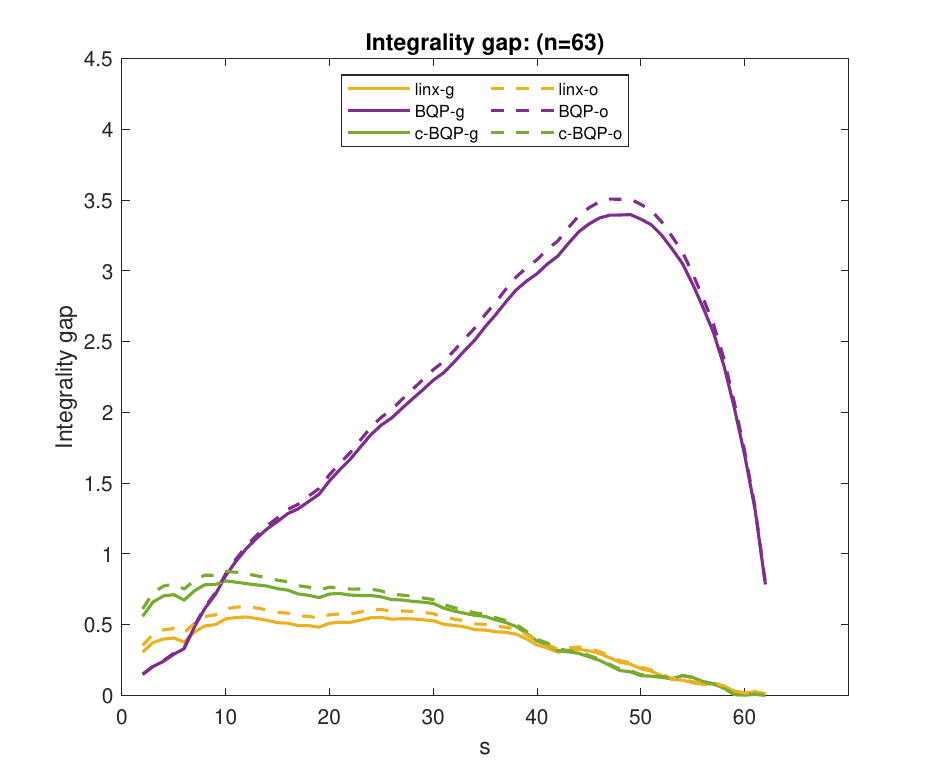}
            %\caption[]%{{\small Network 2}}
            %\label{}
        \end{subfigure}
        \hfill
        \begin{subfigure}[b]{0.496\textwidth}
            \centering
            \includegraphics[height=5.5cm,width=1.05\textwidth]{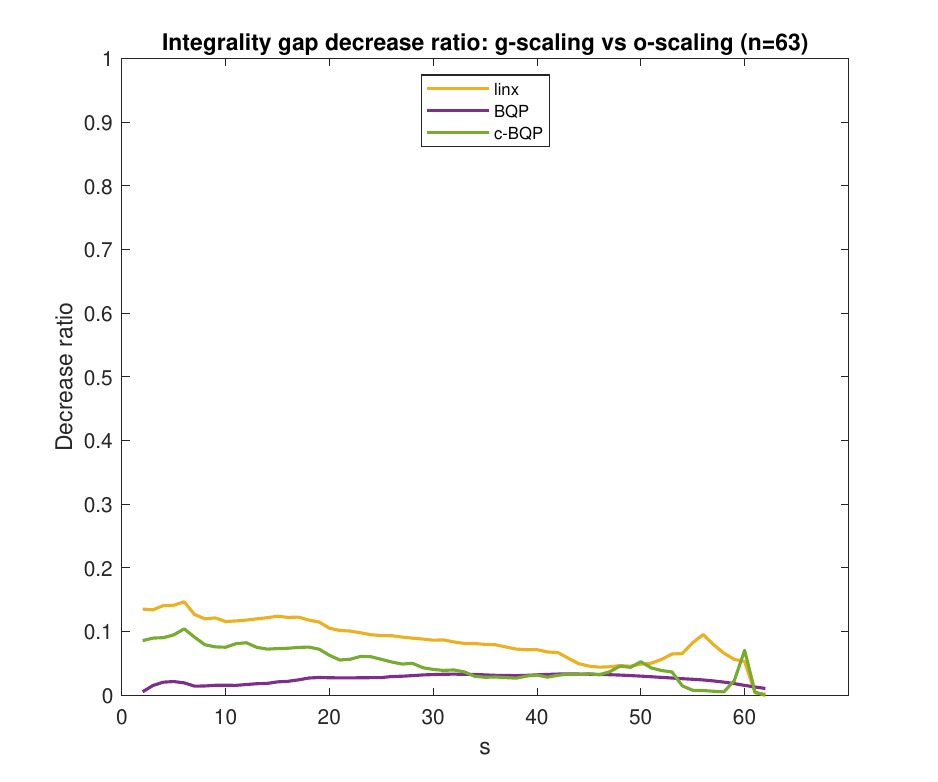}
            %\caption[]%{{\small Network 1}}
            %\label{}
        \end{subfigure}
        \vskip\baselineskip
        \begin{subfigure}[b]{0.496\textwidth}
            \centering
            \includegraphics[height=5.5cm,width=1.05\textwidth]{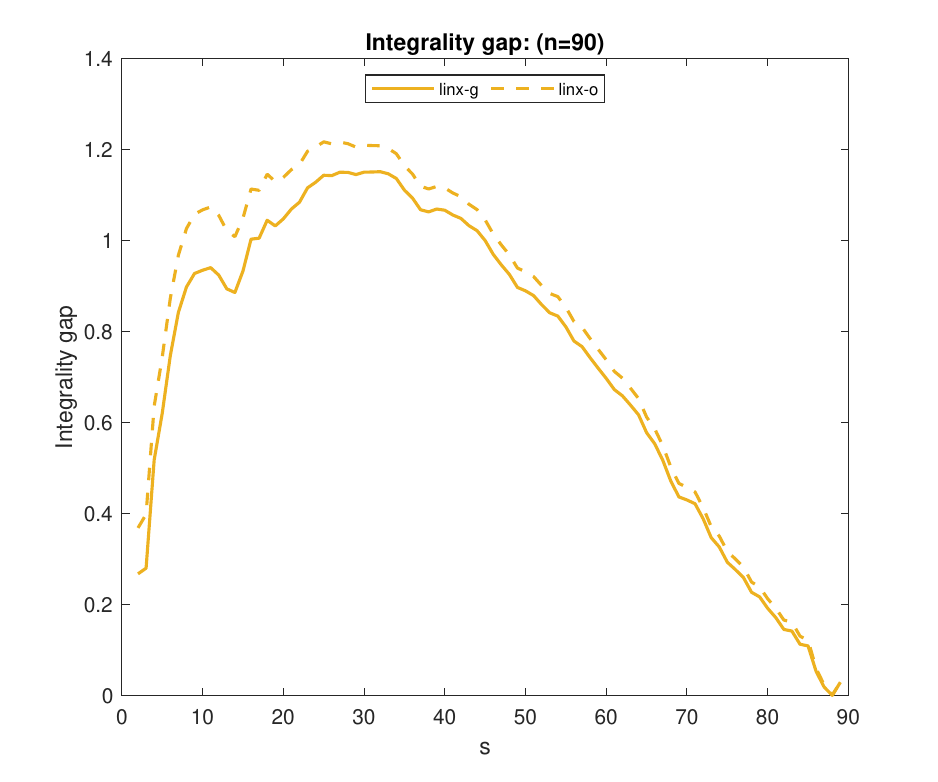}
            %\caption[]%{{\small Network 4}}
            %\label{}
        \end{subfigure}
        \hfill
        \begin{subfigure}[b]{0.496\textwidth}
            \centering
            \includegraphics[height=5.5cm,width=1.05\textwidth]{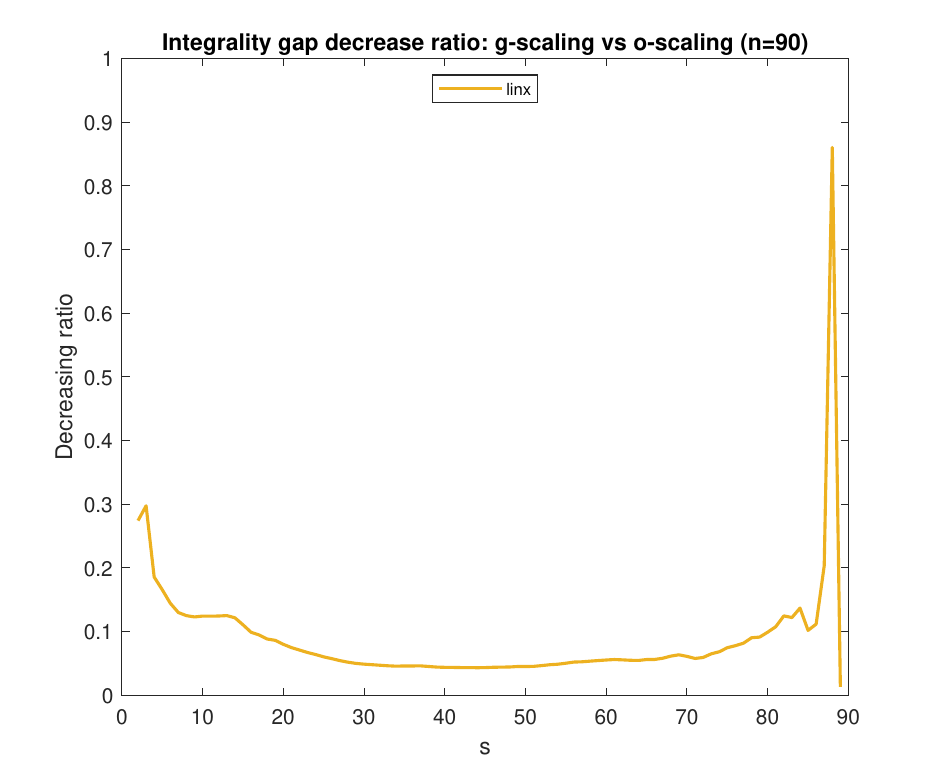}
            %\caption[]%{{\small Network 3}}
            %\label{}
        \end{subfigure}

        \vskip\baselineskip
         \begin{subfigure}[b]{0.496\textwidth}
            \centering
            \includegraphics[height=5.5cm,width=1.05\textwidth]{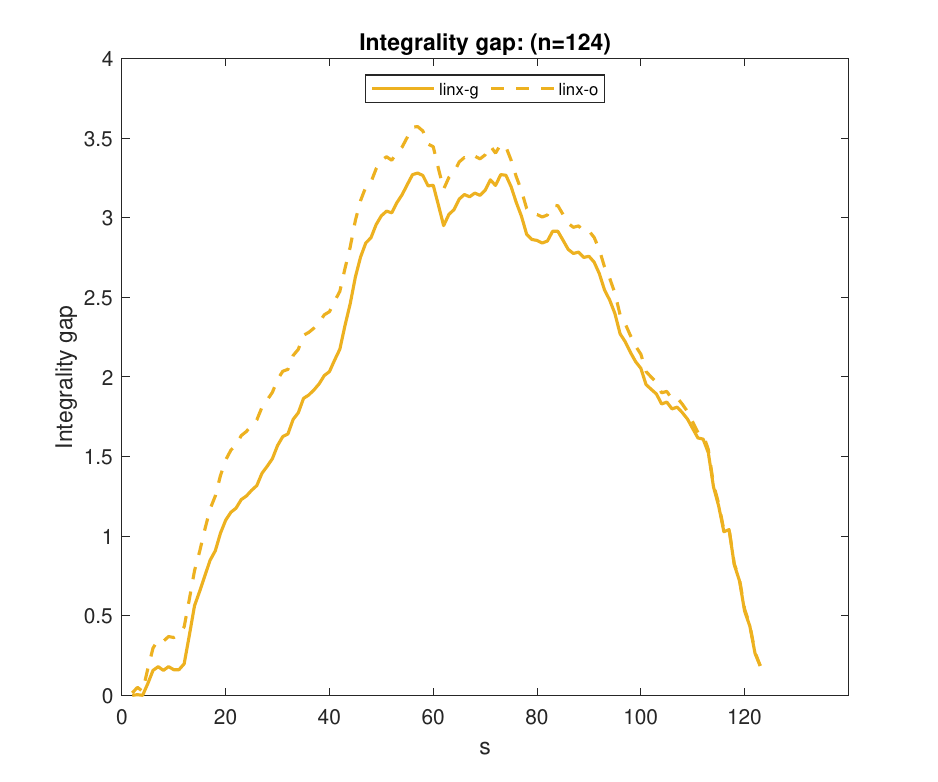}
            %\caption[]%{{\small Network 4}}
            %\label{}
        \end{subfigure}
        \hfill
        \begin{subfigure}[b]{0.496\textwidth}
            \centering
            \includegraphics[height=5.5cm,width=1.05\textwidth]{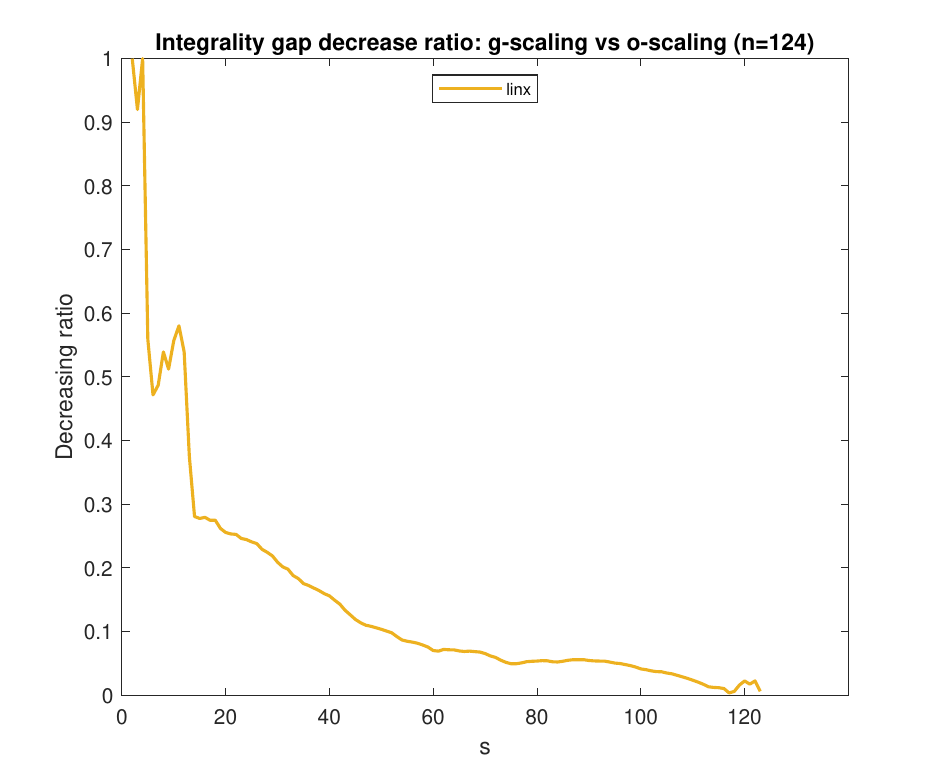}
            %\caption[]%{{\small Network 3}}
            %\label{}
        \end{subfigure}

%\vskip\baselineskip
        % \begin{subfigure}[b]{0.496\textwidth}
        %     \centering
        %     \includegraphics[height=0.5cm,width=1cm]{leg_left.png}
        %     %\caption[]%{{\small Network 3}}
        %     %\label{}
        % \end{subfigure}
        % \hfill
        % \begin{subfigure}[b]{0.496\textwidth}
        %     \centering
        %     \includegraphics[height=0.5cm,width=2.5cm]{leg_right.png}
        %     %\caption[]%{{\small Network 4}}
        %     %\label{}
        % \end{subfigure}

        \caption[ ]
        {\small Comparison between g-scaling and o-scaling for \hyperlink{MESP}{MESP}}
        \label{fig:unconst}
    \end{figure*}

 %%
% \begin{figure}[htbp]
% 	\centering

% 	\scalebox{0.55}{
% 		\includegraphics{data63uratio.pdf}}
% 	\scalebox{0.55}{
% 	\includegraphics{data90uratio.pdf}
% 	}
% 	\scalebox{0.55}{
% 	\includegraphics{data124uratio.pdf}
% 	}
% 	\caption{Comparison between g-scaling and o-scaling for MESP}
% 	\label{fig:unconst}
% \end{figure}

%\subsection{Experiments with CMESP}

In Figures  \ref{fig:intgap} and \ref{fig:ratio}, we show for \ref{CMESP}, similar results  to the ones shown in Figure \ref{fig:unconst}, except that now we also present the effect of g-scaling on  the \ref{DDFact} and the complementary \ref{DDFact} bounds. We see from the integrality gap decrease ratios that when side constraints are added to  \hyperlink{MESP}{MESP}, the g-scaling is, in general, more effective in reducing the gaps given by o-scaling. We also see that,   it is particularly  effective in reducing the \ref{DDFact} and complementary \ref{DDFact} bounds that were not improved by o-scaling. Especially for the $n=124$ matrix, we see a significant reduction on the gaps given by complementary \ref{DDFact} and \ref{DDFact}, for $s$ smaller and greater than  $50$, respectively.

    \begin{figure*}
            \includegraphics[width=0.99\textwidth]{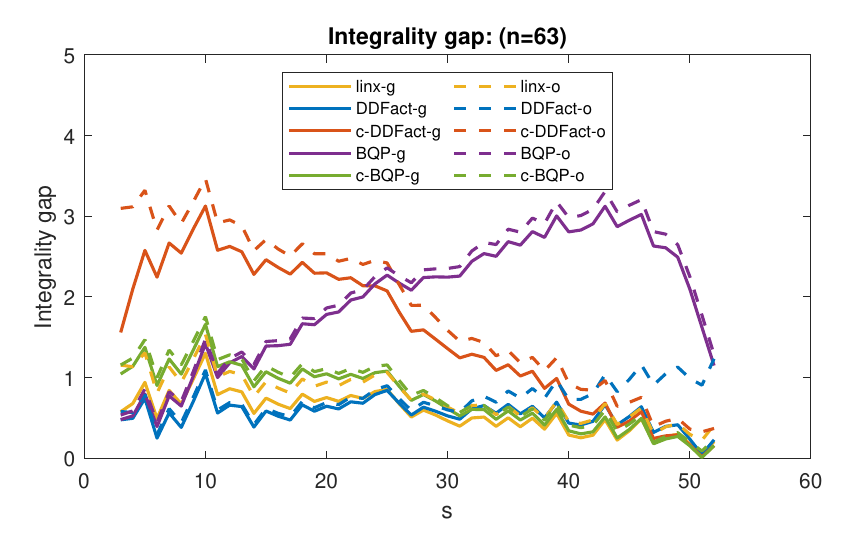}
            \includegraphics[width=0.99\textwidth]{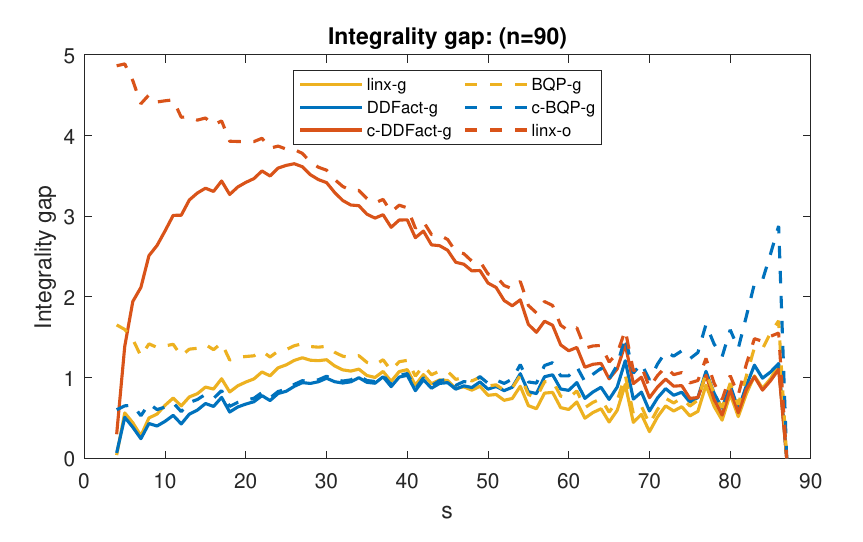}
            \includegraphics[width=0.99\textwidth]{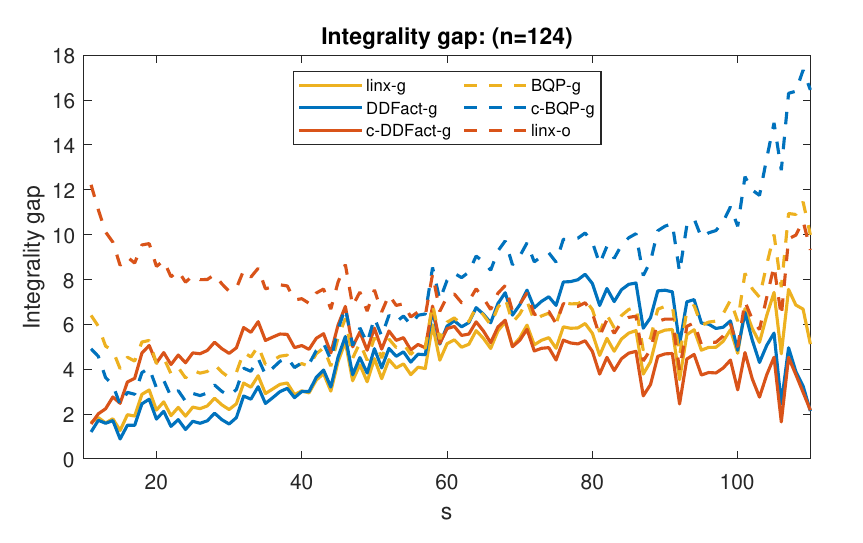}
        \caption[ ]
        {\small Comparison between g-scaling and o-scaling for \ref{CMESP}}
        \label{fig:intgap}
    \end{figure*}
%%%%%%
%%%%%%
\begin{figure*}
            \includegraphics[width=0.99\textwidth]{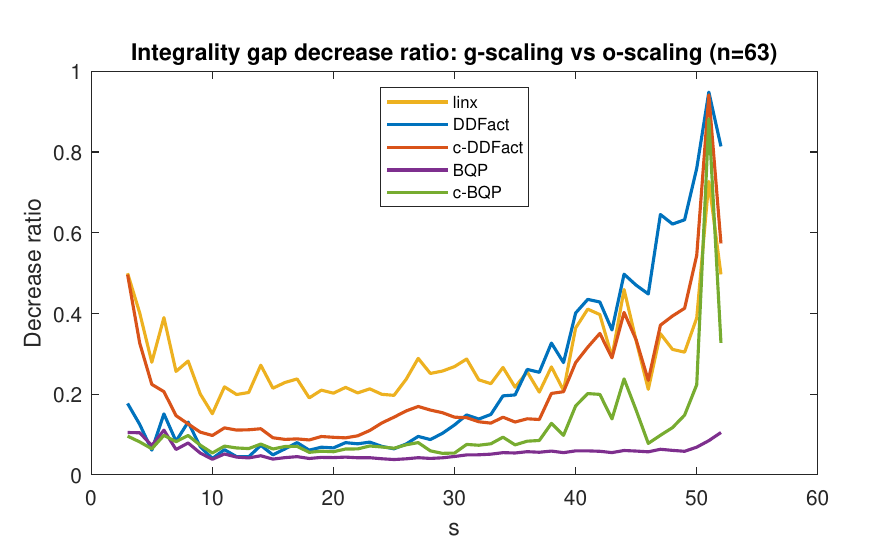}           \includegraphics[width=0.99\textwidth]{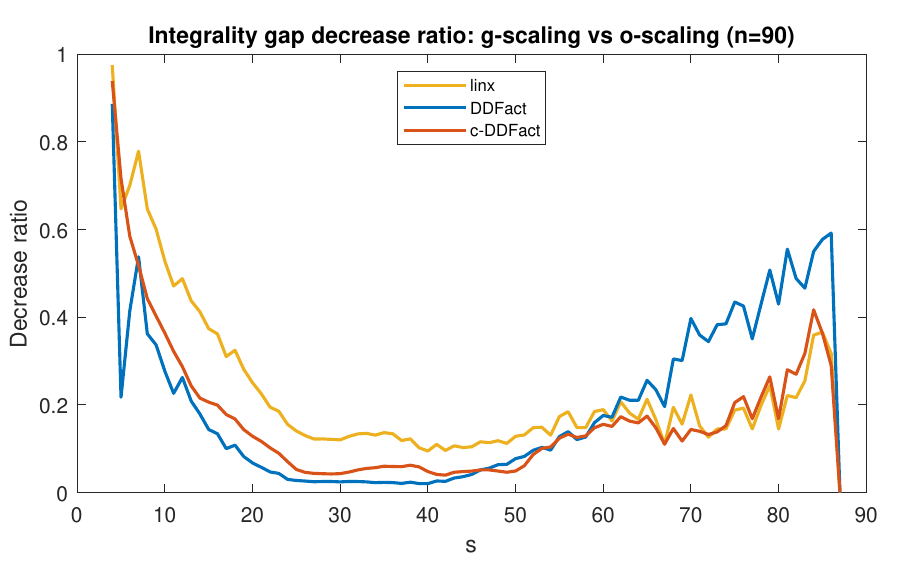}           \includegraphics[width=0.99\textwidth]{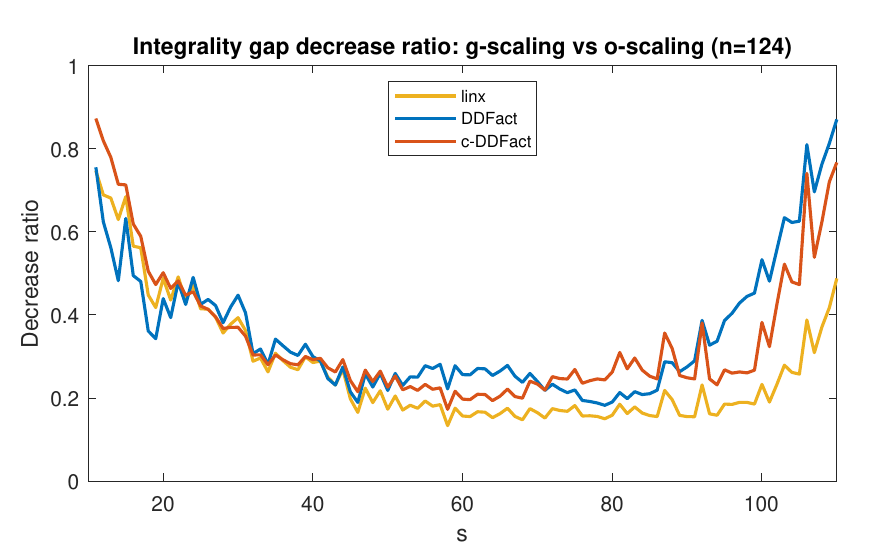}
        \caption[ ]
        {\small Comparison between g-scaling and o-scaling for \ref{CMESP}}
        \label{fig:ratio}
    \end{figure*}

% \begin{figure}[htbp]
% 	\centering

% 	\scalebox{0.55}{
% 		\includegraphics{data63ratio.pdf}}
% 	\scalebox{0.55}{
% 	\includegraphics{data90ratio.pdf}
% 	}
% 	\scalebox{0.55}{
% 	\includegraphics{data124ratio.pdf}
% 	}
% 	\caption{Comparison between g-scaling and o-scaling for CMESP}
% 	\label{fig:const}
% \end{figure}

We also investigated how the improvement of g-scaling  over o-scaling for the linx bound can increase the possibility of fixing variables in \hyperlink{MESP}{MESP} and \ref{CMESP}. The methodology for  fixing variables is based on convex duality and  has been applied  since the first convex relaxation  was proposed for these problems in \cite{AFLW_IPCO}. When a lower bound for each instance
%problem
is available, the dual solution of the  relaxation can potentially be used to fix variables at 0/1 values (see \cite{FLbook}, for example).  This is an important feature in the B{\&}B context. The methodology may be able to fix a number of variables when the relaxation generates a strong bound, and in doing so, it reduces the size of the  successive subproblems and improves the bounds computed for them.

In Table \ref{tab:fix}, for \hyperlink{MESP}{MESP}, we consider  (unscaled) \ref{DDFact} and (unscaled)  complementary \ref{DDFact}, and we show the impact of using g-scaled \ref{linx}, compared to o-scaled \ref{linx}, on an iterative procedure where we solve \ref{linx}, \ref{DDFact}, and complementary \ref{DDFact}, fixing variables at 0/1 whenever possible.  While for \ref{CMESP}, we show the impact of using g-scaled linx, g-scaled \ref{DDFact},  and g-scaled complementary \ref{DDFact}, compared to o-scaled linx,  (unscaled) \ref{DDFact}, and  (unscaled) complementary \ref{DDFact}, on the same iterative procedure where we solve \ref{linx}, \ref{DDFact}, and complementary \ref{DDFact}, fixing variables at 0/1 whenever possible.
% \mf{do we use g-scaling for DDFact and comp-DDFact when considering CMESP? i think we should add a sentence here clarifying what we do in terms of scaling for these two relaxations, for MESP and CMESP} \zc{Please check the red sentences.}
 In both cases, we update the scaling parameters of the scaled bounds at every iteration. For o-scaling, we optimize the scalar  $\gamma$ by applying Newton steps until  the absolute value of the derivative is less than $10^{-10}$. For g-scaling, we were interested here
 in getting closer to a practical computational context.
 So, we optimize the vector   $\Upsilon$ by applying up to 10 BFGS steps, taking $\gamma \mathbf{e}$ as a starting point. In a practical computational context within B\&B, we would have a better starting point (from the parent),
 and we could then probably get away with 2-3 perhaps  BFGS steps.

 We present in the columns of Table \ref{tab:fix}, the following information from left to right: The problem considered, $n$, the range of $s$ considered,
the scaling, the number of instances solved (one for each $s$ considered), the number of instances on which we could fix at least one variable (``inst fix''), the total number of variables fixed on all instances solved (``var fix''), the \%-improvement of g-scaling over o-scaling for the two last statistics. Additionally, to better understand how well our methods works for \hyperlink{MESP}{MESP} as $n$ grows, we also experimented with a covariance matrix of order $n=300$, which is a principal submatrix of the  covariance matrix of order $n=2000$ used as a benchmark in the literature (see \cite{li2020best,chen2023computing}).   First, we see that, except for the number of instances of  \hyperlink{MESP}{MESP} with $n=124$ and $n=300$ on which we could fix variables,  there is always an improvement. The improvement becomes very significant when side constraints are considered.   We note that the number of variables fixed, reported on Table \ref{tab:fix}, refers only to the root nodes of the B{\&}B algorithm and indicates a promising approach to reduce  the B{\&}B enumeration.

% \begin{table}[ht]
% \setlength{\arraycolsep}{10pt}
% \begin{center}
% \begin{tabular}{l|cc|ccc|rr}
% %\hline
% &&&\multicolumn{3}{c|}{Number of}&\multicolumn{2}{|c}{Improvement}\\
% &$n$&scaling&$s$&inst fix&var fix&inst fix&var fix\\
% \hline
% \hyperlink{MESP}{MESP} & 63  & o & 61  & 41 & 1123   &          &          \\
%               &     & g & 61  & 42 & 1140   & 2.44\%   & 1.51\%   \\
%               & 90  & o & 88  & 41 & 1741   &          &          \\
%               &     & g & 88  & 42 & 1790  & 2.44\%   & 2.81\%   \\
%               & 124 & o & 122 & 35 & 3322 &          &          \\
%               &     & g & 122 & 35 & 3353  & 0.00\%   & 0.93\%   \\
%               & 300 & o & 41 & 41 &8382 &\\
%               &    &  g & 41 & 41 & 10753& / &  28.3\%\\
% \hline
% \ref{CMESP}   & 63  & o & 50  & 22 & 371    &          &          \\
%               &     & g & 50  & 28 & 537    & 27.27\%  & 44.74\%  \\
%               & 90  & o & 84  & 26 & 606    &          &          \\
%               &     & g & 84  & 37 & 1048   & 42.31\%  & 72.94\%  \\
%               & 124 & o & 100 & 9  & 197    &          &          \\
%               &     & g & 100 & 33 & 1120  & 266.67\% & 468.53\%
%  %             \hline
% \end{tabular}
% \end{center}
% \medskip
% \caption{Impact of g-scaling on variable fixing}\label{tab:fix}
% \end{table}

\begin{table}[ht]
\setlength{\tabcolsep}{1pt}
\begin{center}
\begin{tabular}{l|ccc|ccc|cc}
%\hline
&&&&\multicolumn{3}{c|}{Number of}&\multicolumn{2}{|c}{Improvement}\\
&$ $& & &$ $&inst &var &inst&var\\
&$n$&s&scal&$s$&fix&fix&fix&fix\\
\hline
\hyperlink{MESP}{MESP}
              & 63  & [2,62] & o & 61  & 41 & 1123   &          &          \\
              &     && g & 61  & 42 & 1140   & 2.44\%   & 1.51\%   \\
              & 90  & [2,89] & o & 88  & 41 & 1741   &          &          \\
              &     && g & 88  & 42 & 1790  & 2.44\%   & 2.81\%   \\
              & 124 & [2,123] & o & 122 & 35 & 3322 &          &          \\
              &     && g & 122 & 35 & 3353  & 0.00\%   & 0.93\%   \\
              & 300 & [80,120] & o & 41 & 41 &8382 &\\
              &     &&  g & 41 & 41 & 10753& 0.00\% &  28.3\%\\
\hline
\ref{CMESP}   & 63  & [3,52] & o & 50  & 22 & 371    &          &          \\
              &     && g & 50  & 28 & 537    & 27.27\%  & 44.74\%  \\
              & 90  & [4,87] & o & 84  & 26 & 606    &          &          \\
              &     && g & 84  & 37 & 1048   & 42.31\%  & 72.94\%  \\
              & 124 & [11,110] & o & 100 & 9  & 197    &          &          \\
              &     && g & 100 & 33 & 1120  & 266.67\% & 468.53\%
 %             \hline
\end{tabular}
\end{center}
%\medskip
\caption{Impact of g-scaling on variable fixing}\label{tab:fix}
\end{table}

%\vskip-10pt

 The experiments with the fixing methodology
 % and the  branch-and-bound algorithm
 show that  g-scaling  can effectively lead to a positive impact on the solution of \hyperlink{MESP}{MESP} and \ref{CMESP}, especially of the latter.

% We further did experiments to show the superiority of algorithms which have iterates at the boundary of $\dom\left(f_{{\tiny\mbox{DDFact}}}; \Upsilon\right)_+$ over interior-point algorithms when computing the factorization bound, thus verifying the indispensability of generalized differentiability for designing an efficient algorithm.
% By doing this, we collect the average running time of four algorithms listed as options of \texttt{Knitro} solver, i.e., interior-point algorithm, interior-point algorithm combined with projected conjugate gradient iteration, active-set algorithm, and sequential quadratic programming (SQP) algorithm. The first two algorithms confine their iterates to the interior of $ \dom\left(f_{{\tiny\mbox{DDFact}}}; \Upsilon\right)_+$ while the latter two can have their iterates at the boundary of $ \dom\left(f_{{\tiny\mbox{DDFact}}}; \Upsilon\right)_+$. In particular, the running time is averaged over $5\le s\le n-5$ for $n=63, 90, 124$ benchmark instances and over $s= 20, 40, 60, 80, 100$ for $n=2000$ benchmark instance.

We carried out further experiments to investigate the relevance of our generalized differentiability for \ref{DDFact}.
For these experiments,
we only worked with \hyperlink{MESP}{MESP},
because we wanted to better expose the non-negativity constraints to the algorithms{,
and we chose (again) factorizations
with $k$ equal to the rank of $C$, taking advantage of Remark \ref{rem:fullrank}.
% \zc{The experiments of Table \ref{table:factexperiments_time}, \ref{table:factexperiments_boundary}, and \ref{table:factexperiments_best} are done for MESP, should we add experiments for \ref{CMESP}?}.
For these experiments, we employed  all four of the \texttt{Knitro} algorithmic options:
Interior/Direct, Interior/Conjugate-Gradient(CG),
Active Set, Sequential Quadratic Programming (SQP), and chose all of the other \texttt{Knitro}
parameters as described for our first experiments.
% \mf{Please inform the stopping criteria for each algorithm and relevant parameters and  tolerances  (feasibility and optimality))}\zc{emphasize we use the same parameters as before.}
The first two algorithms have all of their iterates in the interior of $ \dom\left(f_{{\tiny\mbox{DDFact}}}; \Upsilon\right)_+$\,,
while the latter two can have  iterates at the boundary of $\dom\left(f_{{\tiny\mbox{DDFact}}}; \Upsilon\right)_+$\,.
We collected average converging times of the four algorithms in Table \ref{table:factexperiments_time}.
  In particular, the converging times are averaged over $5\le s\le n-5$ for the $n=63, 90, 124$ benchmark  covariance matrices and over $s= 20, 40, 60, 80, 100$ for the (full) $n=2000$ benchmark  covariance matrix.
  % \jon{for $n=2000$, is this the entire matrix}\zc{yes}
 %  \jon{Are these with $\Upsilon$ fixed (for each run) and how? Or is this un-scaled?} \zc{They are unscaled because we experiment on MESP.}
  To mitigate the impact of some variance in the run time for each instance, we also included in Table \ref{table:factexperiments_best} the percentage of instances $s$ for each $n$ where the convergence time of the algorithm is within $105\%$ of the convergence time of the best-performing algorithm among the four. This criterion implies that the algorithm is considered the best within a tolerance of $5\%$. Additionally, in Table \ref{table:factexperiments_boundary}, we gathered the average iterates that lie on the boundary of $\dom\left(f_{{\tiny\mbox{DDFact}}}; \Upsilon\right)_+$ for each algorithm to exhibit the relevance of generalized differentiabiliy in Definition \ref{def:generaldiff}. We use the rank function of \texttt{MATLAB} to determine the boundary iterates
  by singularity of $F_{{\tiny\mbox{DDFact}}}(x;\Upsilon)$ (equivalently, when $x$ has any zero components; see Remark \ref{rem:fullrank}). In particular, \texttt{MATLAB} asserts a matrix to be singular if the matrix has some singular value smaller than the product of the maximum of dimension lengths and the exponential of the matrix 2-norm.

  Table \ref{table:factexperiments_time} exhibits  that the active-set algorithm consistently achieves the minimum, or near-minimum, average converging time. Table \ref{table:factexperiments_best} shows that the active-set algorithm has the greatest winning percentages except for $n=124$, where the combined winning percentages of the active-set and SQP algorithms still exceed those of the other two algorithms. These outcomes indicate the superiority of algorithms that produce iterates lying on the boundary of $\dom\left(f_{{\tiny\mbox{DDFact}}}; \Upsilon\right)_+$\,, thereby emphasizing the relevance of generalized differentiability in justifying their use.
   Table \ref{table:factexperiments_boundary} reveals that for both the active-set and SQP algorithms, nearly all iterates are on the boundary of $\dom\left(f_{{\tiny\mbox{DDFact}}}; \Upsilon\right)_+$\,. We note that even the interior-point methods display iterates on the boundary of $\dom\left(f{{\tiny\mbox{DDFact}}}; \Upsilon\right)_+$\, within the tolerance. These findings underscore the relevance of generalized differentiability across all algorithms.

% \jon{Zhongzhu, please clearly state what we can conclude from each table. For Table
% \ref{table:factexperiments_boundary}, precisely state how you are calculating these numbers. I think that we agreed to use the matlab rank function, and if so, what is there criterion and  tolerance? }

\begin{table}
    \centering
    \begin{tabular}{c|C{1.7cm}|C{1.7cm}|C{1.7cm}|C{1.7cm}}
        n & Interior & Interior(CG) & Active-set & SQP  \\
         \hline
       63  &  0.09 &	0.42 &	0.11 &	0.18\\
       90  &  0.19 &	0.83 &	0.20 &	0.29\\
       124  &  0.40 &	1.63 &	0.37 &	0.44\\
       2000  &  1292.2 & 2227.39 &	96.30 &	304.60\\
    \end{tabular}
    \caption{Average converging time of each algorithm for solving \ref{DDFact}.}
    \label{table:factexperiments_time}
\end{table}

\begin{table}
    \centering
    \begin{tabular}{c|C{1.7cm}|C{1.7cm}|C{1.7cm}|C{1.7cm}}
        n & Interior & Interior(CG) & Active-set & SQP  \\
         \hline
       63  &  46.3     &    0  &  55.6    &    0\\
       90  &  48.8	& 0	& 57.3 & 1.2\\
       124  &  48.3 & 0 & 44.8  & 13.8\\
       2000  &  0 & 0 &	100.0 &	0\\
    \end{tabular}
    \caption{$\%$ of $s$ on which the algorithm converges within no more than $105\%$ converging time of the best algorithm (i.e., optimal under $5\%$ tolerance).}
    \label{table:factexperiments_best}
\end{table}

\begin{table}
    \centering
    \begin{tabular}{c|C{1.7cm}|C{1.7cm}|C{1.7cm}|C{1.7cm}}
        n & Interior & Interior(CG) & Active-set & SQP  \\
         \hline
       63  &  27.6 &	1.3 &	97.7 &	97.2\\
       90  &  35.5 &	0.8 &	98.7 &	98.6\\
       124  &  61.1 &	22.4 &	93.0 &	93.6\\
       2000  &  69.2 & 29.5 &	100.0 &	100.0\\
    \end{tabular}
    \caption{Average $\%$ of iterates
    with $x$ having any zero components,
    % at the boundary of $\dom\left(f_{{\tiny\mbox{DDFact}}}; \Upsilon\right)_+$,
    which is equivalent to the singularity of $F_{{\tiny\mbox{DDFact}}}(x;\Upsilon)$.}
    \label{table:factexperiments_boundary}
\end{table}

\section{Concluding remarks}\label{sec:conc}
We have seen that g-scaling can lead to improvements in upper bounds and variable fixing  for \hyperlink{MESP}{MESP} and very good improvements for \ref{CMESP}.
In future work, we will implement this in an efficient manner, within a B{\&}B algorithm.
In that context, it is important to efficiently use parent scaling vectors to warm-start
the optimization of scaling vectors for children (see \cite{Kurt_linx}, where this was an important issue for o-scaling in the context of the \ref{linx} bound). An open question that we wish to highlight
is whether g-scaling can help the \ref{DDFact} bound for \hyperlink{MESP}{MESP}.
Theorem \ref{thm:fact}.\emph{iv} is a partial result toward a negative answer.

Finally, we remark that there is room to do g-scaling for other bounds for \ref{CMESP}.
We did not work with g-scaling for the NLP bound. Besides the fact that we do not have a
convexity result  for o-scaling of the NLP bound as a starting point for generalizing the theory,
the o-scaling parameter is entangled with other parameters of the NLP bound which must be selected properly (even for the NLP bound to be a convex optimization problem).
For these reasons, we have left exploration of g-scaling for the NLP bound for future research.
Additionally, we did not attempt to merge the ideas of $g$-scaling with bound ``mixing'' (see \cite{chen_mixing}); this looks like another promising area for investigation.

% Finally, there is another convex-optimization bound, the so-called ``NLP bound'' (see \cite{AFLW_Using}),
% and it appears to be more difficult to get mathematical results on optimizing a g-scaling version of that
% bound; but this is a good direction to explore.

% \subsection{A Subsection Sample}
% Please note that the first paragraph of a section or subsection is
% not indented. The first paragraph that follows a table, figure,
% equation etc. does not need an indent, either.

% Subsequent paragraphs, however, are indented.

% \subsubsection{Sample Heading (Third Level)} Only two levels of
% headings should be numbered. Lower level headings remain unnumbered;
% they are formatted as run-in headings.

% \paragraph{Sample Heading (Fourth Level)}
% The contribution should contain no more than four levels of
% headings. Table~\ref{tab1} gives a summary of all heading levels.

% \begin{table}
% \caption{Table captions should be placed above the
% tables.}\label{tab1}
% \begin{tabular}{|l|l|l|}
% \hline
% Heading level &  Example & Font size and style\\
% \hline
% Title (centered) &  {\Large\bfseries Lecture Notes} & 14 point, bold\\
% 1st-level heading &  {\large\bfseries 1 Introduction} & 12 point, bold\\
% 2nd-level heading & {\bfseries 2.1 Printing Area} & 10 point, bold\\
% 3rd-level heading & {\bfseries Run-in Heading in Bold.} Text follows & 10 point, bold\\
% 4th-level heading & {\itshape Lowest Level Heading.} Text follows & 10 point, italic\\
% \hline
% \end{tabular}
% \end{table}

\section*{Acknowledgements}
We are especially grateful to Kurt Anstreicher for suggesting the possibility of generalizing (ordinary) scaling for the linx bound.
M. Fampa was supported in part by CNPq grants 305444/2019-0 and 434683/2018-3.  J. Lee was supported in part by AFOSR grant FA9550-22-1-0172. This work is partially based upon work supported by the
National Science Foundation under Grant No. DMS-1929284 while
the authors were in residence at the Institute for Computational and Experimental Research in
Mathematics (ICERM) at Providence, RI, during the Discrete Optimization program.

%\section{Appendix}\label{appendix}

\FloatBarrier

% ----------------------------------------------------------------
\bibliographystyle{alpha}

\bibliography{MO}

\appendix
\section*{Appendix}\label{sec:App}

We give a small numerical example to demonstrate how a g-scaling bound
can be better than the optimal o-scaling bound. Consider the (randomly generated) positive-definite matrix
\begin{equation*}
    C := \begin{pmatrix*}[r]
         5.69 & -1.34 & -0.93 & -0.61 \\
         -1.34 & 3.49 & 1.36 & 0.45 \\
         -0.93 & 1.36 & 3.71 & 2.25 \\
         -0.61 & 0.45 & 2.25 & 3.13 \\
    \end{pmatrix*},
\end{equation*}
with $n=4$. We work with the \ref{linx} bound, as it is the  easiest for an interested reader to check (using software like \texttt{CVX} or \texttt{YALMIP}, for example).

We took $s=2$, and it easy  to check that
among the $\binom{4}{2}\!=\!6$ feasible solutions,
the optimal one is $\hat{x}=(1,0,1,0)^\top$ with optimal value
$3.0079$. We calculated
the optimal o-scaling parameter $\gamma^*=0.0963$ (which can be verified by perturbation, because of convexity in the scaling parameter), with the corresponding optimal solution $x^*=(1.0000, 0.2423, 0.7577, 0.0000)^\top$. This gives  an o-scaled \ref{linx} bound of $3.0210$ with an integrality gap of $0.0131$.  On the other hand, employing the g-scaling parameter $\Upsilon^* = (0.1215, 0.3306, 0.3358, 0.2839)^\top$, the (optimal) g-scaled \ref{linx}  solution is $x^*=(1.0000, 0.1850, 0.7319, 0.0831)^\top$,
which results in a bound of $3.0146$ and a reduced integrality gap of $0.0067$. So, we
can see that for this example, g-scaling
reduces the gap by about 50\%.

% The eigenvalues of $\gamma^* C$ are $0.0963$, $0.7387$, $2.0309$, and $5.6495$, signifying a non-one spectrum. Therefore, via references \cite[Remark 5]{chen_mixing} and \cite[Theorem 21]{chen_mixing}, we can confirm that the o-scaled \ref{linx} bound, at $\gamma^*$, possesses a unique solution and thus is differentiable at $\gamma^*$. Using the gradient expression from \cite[Theorem 18]{chen_mixing}, we validate $\gamma^*$ as the optimal parameter, yielding an o-scaled \ref{linx} bound of $3.0210$ with an integrality gap of $0.0131$. On the other hand, employing the g-scaling parameter $\Upsilon = (0.1215, 0.3306, 0.3358, 0.2839)^\top$, the optimal g-scaled \ref{linx} bound solution is $x^*=(1.0000, 0.1850, 0.7319, 0.0831)^\top$, computed via KKT conditions, which results in a bound of $3.0146$ and a reduced integrality gap of $0.0067$. This comparison illustrates the substantial enhancements afforded by g-scaling. In fact, this effect is more significant for larger problem dimensions.

\end{document}